\documentclass{amsart}
\usepackage{url}
\usepackage{graphicx}
\usepackage{color}
\usepackage{times}
\usepackage{cite}
\usepackage{enumerate,latexsym}
\usepackage{amsmath,amssymb}
\usepackage{amsthm}
\usepackage{verbatim}
\usepackage{mathrsfs}
\usepackage{tabularx}

\vfuzz2pt 
\hfuzz2pt 
\newtheorem{thm}{Theorem}[section]
\newtheorem*{theorem*}{Theorem}
\newtheorem*{acknowledgement*}{Acknowledgement}
\newtheorem{cor}[thm]{Corollary}

\newtheorem{lem}[thm]{Lemma}
\newtheorem{prop}[thm]{Proposition}
\theoremstyle{definition}
\newtheorem{defn}[thm]{Definition}
\theoremstyle{remark}
\newtheorem{rem}[thm]{Remark}

\numberwithin{equation}{section}

\newcommand{\set}[1]{\left\{#1\right\}}
\newcommand{\Real}{\mathbb R}

\newcommand{\func}[1]{\ensuremath{\mathop{\mathrm{#1}}} }

\newcommand{\spt}[0]{\func{spt}}

\newcommand{\sing}[0]{\func{sing}}
\newcommand{\reg}[0]{\func{reg}}
\newcommand{\xX}[0]{\mathbf{x}}

\newcommand{\yY}[0]{\mathbf{y}}

\newcommand{\cC}[0]{\mathcal{C}}

\newcommand{\OO}{\mathbf{0}}
\newcommand{\sstar}{\star\star}

\title{Closed hypersurfaces of low entropy in $\Real^4$ are isotopically trivial}

\author{Jacob Bernstein}
\address{Department of Mathematics, Johns Hopkins University, 3400 N. Charles Street, Baltimore, MD 21218}
\email{bernstein@math.jhu.edu}

\author{Lu Wang}
\address{Department of Mathematics, California Institute of Technology, 1200 E. California Boulevard, CA 91125}
\email{drluwang@caltech.edu}

\begin{document}

\begin{abstract}
We show that any closed connected hypersurface in $\mathbb{R}^4$ with entropy less than or equal to that of the round cylinder is smoothly isotopic to the standard three-sphere.
\end{abstract}

\maketitle

\section{Introduction} \label{IntroSec}
If $\Sigma$ is a {hypersurface}, that is, a smooth properly embedded codimension-one submanifold of $\Real^{n+1}$, then its \emph{Gaussian surface area} is
\begin{equation}
F[\Sigma]=\int_{\Sigma}\Phi\, d\mathcal{H}^{n}=(4\pi)^{-\frac{n}{2}}\int_{\Sigma}e^{-\frac{|\xX|^2}{4}} d\mathcal{H}^n,
\end{equation}
where $\mathcal{H}^n$ is $n$-dimensional Hausdorff measure. Following Colding--Minicozzi \cite{CMGen}, define the \emph{entropy} of $\Sigma$ to be
\begin{equation}
\lambda[\Sigma]=\sup_{\yY\in\Real^{n+1}, \rho>0}F[\rho\Sigma+\yY].
\end{equation}
Throughout the paper, let $\mathbb{S}^n$ be the standard $n$-sphere in $\mathbb{R}^{n+1}$ and $\mathbb{S}^{n-1}\times\mathbb{R}$ the round cylinder in $\mathbb{R}^{n+1}$. The main result of the paper is the following.

\begin{thm}\label{MainTopThm}
If $\Sigma\subset\Real^{4}$ is a closed (i.e., compact without boundary) connected hypersurface with $\lambda[\Sigma]\leq\lambda[\mathbb{S}^2\times\Real]$, then $\Sigma$ is smoothly isotopic to $\mathbb{S}^3$.
\end{thm}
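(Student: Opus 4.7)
The plan is to show that the weak (level-set) mean curvature flow of $\Sigma$, possibly after a small entropy-decreasing $C^\infty$ perturbation, has singularities so restricted that it can be resolved into a smooth isotopy from $\Sigma$ to a disjoint union of round $3$-spheres. The key quantitative input will be the chain $\lambda[\mathbb{S}^3]<\lambda[\mathbb{S}^2\times\Real]<\lambda[\mathbb{S}^1\times\Real^2]$: the hypothesis $\lambda[\Sigma]\leq\lambda[\mathbb{S}^2\times\Real]$, preserved under mean curvature flow by Colding--Minicozzi entropy monotonicity, therefore excludes the degenerate cylinder $\mathbb{S}^1\times\Real^2$ as a possible tangent flow. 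Combined with an earlier classification by the authors of self-shrinkers in $\Real^4$ with entropy at most $\lambda[\mathbb{S}^2\times\Real]$, this will leave only the hyperplane $\Real^3$, the round sphere $\mathbb{S}^3$, and the generalized cylinder $\mathbb{S}^2\times\Real$ as admissible multiplicity-one tangent flows.

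With this classification in hand, I would split the analysis of the flow into three cases. A hyperplane tangent flow will mean the singularity is actually a regular point by pseudolocality. A spherical tangent flow will correspond to the smooth extinction of a compact component of $\Sigma_t$ at a round point, giving an explicit isotopy of that component to a small round $3$-sphere via the smooth flow immediately prior to extinction. A cylindrical tangent flow will correspond to a neck-pinch, and to resolve it I would glue in the mean-convex self-expander asymptotic to the relevant double cone, producing a smooth one-parameter family of hypersurfaces bridging the singular time whose net topological effect is a reverse $\mathbb{S}^3$ connect-sum, i.e., trivial up to smooth ambient isotopy. Inductively handling the (finitely many) cylindrical and spherical singular times will then yield a smooth isotopy from $\Sigma$ to a disjoint union of standard $3$-spheres; connectedness of $\Sigma$ will force the result to be a single $\mathbb{S}^3$, proving the theorem.

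The hardest step will be the rigorous resolution of cylindrical singular times. Standard Huisken--Sinestrari or Haslhofer--Kleiner surgery is not directly applicable because $2$-convexity of $\Sigma$ is not implied by the entropy bound. Instead, one must classify \emph{ancient} mean curvature flows in $\Real^4$ with entropy at most $\lambda[\mathbb{S}^2\times\Real]$ in order to show that each cylindrical singular point of the weak flow is modeled on a genuine round neck pinch and not a more degenerate limit, and one needs a uniqueness theorem for self-expanders with the appropriate conical asymptotics so that the smoothing is canonical and depends continuously on parameters. Propagating these local isotopies through all singular times while maintaining the entropy bound and controlling the global ambient isotopy class—especially ensuring that the smoothed-out flow remains embedded and does not generate new low-entropy tangent flows outside the classified list—will be the main technical payload of the argument.
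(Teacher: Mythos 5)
Your proposal has the right guiding philosophy — monotone entropy under mean curvature flow, restricted tangent flows, and resolution of conical singularities using self-expanders with a uniqueness theorem — but it contains a concrete mistake about what the tangent flows are and omits two technical steps that are essential and nontrivial.

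First, the classification of admissible tangent flows is wrong as stated. After perturbing $\Sigma$ so that $\lambda[\Sigma]<\lambda[\mathbb{S}^2\times\Real]$ strictly (which one may always do unless $\Sigma$ is itself a round sphere), Huisken monotonicity makes \emph{every} tangent flow have entropy strictly below $\lambda[\mathbb{S}^2\times\Real]=\lambda[\mathbb{S}^2]$, so the cylinder $\mathbb{S}^2\times\Real$ cannot appear as a tangent flow at all, and there are no ``neck-pinches'' to do surgery on. What the earlier work (\cite{BWDuke}, \cite{BWGT}, quoted in Proposition \ref{CondTangentFlowProp}) actually proves is a dichotomy: each tangent flow is either compact and isotopic to $\mathbb{S}^3$, or \emph{asymptotically conical}. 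The argument does not, and cannot, eliminate non-flat asymptotically conical self-shrinkers — no classification theorem rules out their existence in $\Real^4$ at this entropy level — so the dominant technical difficulty is precisely the asymptotically conical singularities, not cylindrical ones. Consequently your picture of a flow with finitely many spherical or neck-pinch events, each an $\mathbb{S}^3$ connect-sum, does not match what actually occurs. Second, there is no reason a priori that singular times are finite or even isolated: with asymptotically conical tangent flows, singularities can a priori accumulate into the past, which is exactly why the paper introduces an entropy quantization estimate (Proposition \ref{QuantaProp}: a uniform gap $\lambda[\Sigma]\ge\lambda[\mathcal{C}(\Sigma)]+\delta_1$) and a bubble-tree iterated-blowup argument (Theorem \ref{BubbleTreeThm}). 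Without that step, your inductive ``handle each singular time'' argument has no base for termination. Third, your sketch of gluing in a mean-convex self-expander at a neck singularity is not the mechanism the paper uses; instead, the paper takes the forward-in-time tangent flow to be a self-expander via the forward monotonicity formula of \cite{BWExpanderRelEnt} (this itself requires a second blowup, since there is no a priori reason the forward evolution is self-similar), and then applies the topological uniqueness theorem of \cite{BWUniqueExpander} for low-entropy self-expanders asymptotic to a fixed cone to show the local isotopy class is preserved across the singular time. Your suggestion of classifying ancient flows is closer in spirit to the independent approach of Chodosh--Choi--Mantoulidis--Schulze mentioned in the paper's remark, not to the present paper's argument, and would require a substantially different body of results than the ones you have in mind.
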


\begin{rem}
Theorem \ref{MainTopThm} was announced in \cite{BWUniqueExpander}. While we were finishing the writing of this paper, we learned about work of Chodosh-Choi-Mantoulidis-Schulze \cite{CCMS} on generic mean curvature flow which provides an alternative approach to Theorem \ref{MainTopThm}.
\end{rem}

Entropy is a natural geometric quantity that measures complexity and is invariant under rigid motion and dilations. It is known that $\lambda[\mathbb{R}^n]=1$ and, by a computation of Stone \cite{Stone},
\begin{equation}
2>\lambda[\mathbb{S}^1]>\frac{3}{2}>\lambda[\mathbb{S}^2]>\ldots>\lambda[\mathbb{S}^n]>\ldots\rightarrow\sqrt2.
\end{equation}
In \cite{BWInvent}, we prove a conjecture of Colding-Ilmanen-Minicozzi-White \cite[Conjecture 0.9]{CIMW} (cf. \cite{KetoverZhou, JZhu}) that, for $2\leq n\leq 6$, the entropy of a closed hypersurface in $\Real^{n+1}$ is uniquely (modulo translations and dilations) minimized by $\mathbb{S}^n$. We further show, in \cite[Corollary 1.3]{BWDuke} and \cite[Theorem 1.1]{BWGT}, that, for $n=2$ or $3$, any closed connected hypersurface $\Sigma\subset\Real^{n+1}$ with $\lambda[\Sigma]\leq \lambda[\mathbb{S}^{n-1}\times\mathbb{R}]$ is diffeomorphic to $\mathbb{S}^n$. By Alexander's theorem \cite{Alexander}, any surface in $\Real^3$ that is topologically a two-sphere is isotopic to $\mathbb{S}^2$. The analogous question for a three-sphere in $\Real^4$ -- known as the Schoenflies problem -- is a major open problem; see \cite{Scharlemann, Scharlemann2} for partial results in which this conjecture is proved for hypersurfaces whose embeddings are ``simple" in certain topological senses. Theorem \ref{MainTopThm} may be thought of as an affirmative answer to the Schoenflies problem for hypersurfaces that are ``simple" in a geometric sense, namely, for those that have low entropy.

In \cite{BWGT}, we also studied the topology of low entropy closed hypersurfaces in higher dimensions.  In particular, strong evidence was provided that, for $n\geq 4$, any closed connected hypersurface $\Sigma\subset \Real^{n+1}$ with $\lambda[\Sigma]\leq \lambda[\mathbb{S}^{n-1}\times \Real]$ is a homology $\mathbb{S}^n$. However, this result is necessarily conditional as it required a better understanding than current existences of low entropy minimal cones and low entropy self-shrinkers in higher dimensions. Using the more detailed analysis of this paper we are able to improve the conclusions of this conditional result to their sharpest possible form.

In order to state the conditional result, first let $\mathcal{S}_n$ be the set of self-shrinkers in $\mathbb{R}^{n+1}$, that is $\Sigma\in\mathcal{S}_n$ if and only if $\Sigma$ is a hypersurface in $\mathbb{R}^{n+1}$ satisfying 
\begin{equation} \label{ShrinkerEqn}
\mathbf{H}_\Sigma+\frac{\mathbf{x}^\perp}{2}=\mathbf{0}
\end{equation}
where $\mathbf{x}$ is the position vector, the superscript $\perp$ denotes the projection to the unit normal $\mathbf{n}_\Sigma$ of $\Sigma$, and $\mathbf{H}_\Sigma=-H_\Sigma\mathbf{n}_\Sigma=-(\mathrm{div}_\Sigma\mathbf{n}_\Sigma)\mathbf{n}_\Sigma$ is the mean curvature vector. Let $\mathcal{S}_n^*$ be the set of non-flat elements of $\mathcal{S}_n$ -- these are precisely the models of how singularities of mean curvature flow form. For any $\Lambda>0$, let 
$$
\mathcal{S}_n(\Lambda)=\set{\Sigma\in \mathcal{S}_n\colon  \lambda[\Sigma]<\Lambda} \mbox{ and } \mathcal{S}_n^*(\Lambda)=\mathcal{S}^*_n \cap \mathcal{S}_n(\Lambda).
$$
Next, let $\mathcal{RMC}_n$ denote the space of \emph{regular minimal cones} in $\Real^{n+1}$, that is $\mathcal{C}\in \mathcal{RMC}_n$ if and only if it is a proper subset of $\Real^{n+1}$ and $\mathcal{C}\backslash\set{\OO}$ is a hypersurface in $\Real^{n+1}\backslash\set{\OO}$ that is invariant under dilation about $\OO$ and with vanishing mean curvature. Let $\mathcal{RMC}_n^*$ denote the set of non-flat elements of $\mathcal{RMC}_n$ -- i.e., cones whose second fundamental forms do not identically vanish. For any $\Lambda>0$, let 
$$
\mathcal{RMC}_n(\Lambda)=\set{\mathcal{C}\in \mathcal{RMC}_n : \lambda[\mathcal{C}]< \Lambda} \mbox{ and } \mathcal{RMC}_n^*(\Lambda)=\mathcal{RMC}^*_n \cap \mathcal{RMC}_n(\Lambda).
$$

Let us now fix a dimension $n\geq 3$ and a value $\Lambda>1$. The first hypothesis is 
\begin{equation} \label{Assump1}
\mbox{For all $3\leq k\leq n$, }\mathcal{RMC}_{k}^*(\Lambda)=\emptyset \tag{$\star_{n,\Lambda}$}.
\end{equation}
Observe that all regular minimal cones in $\mathbb{R}^2$ consist of unions of rays and so $\mathcal{RMC}^*_1=\emptyset$. As great circles are the only geodesics in $\mathbb{S}^2$, $\mathcal{RMC}_2^*=\emptyset$. The second hypothesis is
\begin{equation} \label{Assump2}
\mathcal{S}_{n-1}^*(\Lambda) =\emptyset. \tag{$\sstar_{n,\Lambda}$}
\end{equation}
Obviously this holds only if $\Lambda\leq\lambda[\mathbb{S}^{n-1}]$. Denote by $\lambda_n=\lambda[\mathbb{S}^n]$. We then show the following conditional result in general dimensions.

\begin{thm}\label{MainCondThm}
Fix $n\geq 3$ and $\Lambda\in (\lambda_{n}, \lambda_{n-1}]$. If \eqref{Assump1} and \eqref{Assump2} both hold and $\Sigma$ is a closed connected hypersurface in $\Real^{n+1}$ with $\lambda[\Sigma]\leq\Lambda$, then $\Sigma$ is smoothly isotopic to $\mathbb{S}^n$.
\end{thm}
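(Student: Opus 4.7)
The strategy is to run mean curvature flow (MCF) starting from $\Sigma$ and to track the smooth ambient isotopy class of the evolving hypersurface through its singular times, using $\rstar{n,\Lambda}$ and $\rsstar{n,\Lambda}$ to restrict the possible singularity models to the simplest ones.

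The first step is to show that, under the hypotheses, every non-flat self-shrinker in $\mathcal{S}_n(\Lambda)$ is---up to rigid motion and dilation---either the round sphere $\mathbb{S}^n$ or the generalized cylinder $\mathbb{S}^{n-1}\times\Real$. The assumption $\rsstar{n,\Lambda}$ eliminates any non-flat shrinker in $\Real^n$, so in particular any non-trivial cross-section for a cylindrical end or blow-down of a shrinker in $\Real^{n+1}$, while $\rstar{n,\Lambda}$ kills the regular minimal cones that could appear as asymptotic cones at spatial infinity. Combined with the uniqueness of $\mathbb{S}^n$ as the closed entropy minimizer from \cite{BWInvent}, this pins down the list. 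Since entropy is monotone non-increasing under MCF, the classification applies to every tangent flow of the Brakke flow generated by $\Sigma$.

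The second step is to describe the flow locally near each singular point. A spherical singularity corresponds to the extinction of an entire connected component as a small round sphere, and that component is therefore ambient-isotopic in $\Real^{n+1}$ to the standard $\mathbb{S}^n$. A cylindrical singularity is a neck-pinch: the uniqueness of cylindrical tangent flows of Colding--Minicozzi together with a mean-convex neighborhood theorem imply that the flow near such a point is smoothly modeled by the shrinking $\mathbb{S}^{n-1}\times\Real$ on a macroscopic scale, so one can perform an ambient surgery---excising a standard neck and capping off with two balls---realizing the presurgery surface as the ambient connected sum of the post-surgery components via an isotopy supported in a neighborhood of the neck.

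Iterating, the number of singular times before extinction is finite by standard compactness under the entropy bound, and between such times MCF produces an ambient smooth isotopy. Peeling off singular times in reverse represents $\Sigma$, up to smooth ambient isotopy in $\Real^{n+1}$, as a finite iterated ambient connected sum of copies of $\mathbb{S}^n$. Since such a connected sum is ambient-isotopic to a single $\mathbb{S}^n$ and since $\Sigma$ is connected, the theorem follows. The principal obstacle I anticipate is the quantitative identification of each cylindrical singularity with a standard neck on a definite scale, strong enough to upgrade the topological neck-pinch into a genuine ambient smooth isotopy in $\Real^{n+1}$ rather than merely a diffeomorphism as in \cite{BWGT}. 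This requires a refined mean-convex neighborhood theorem for cylindrical singular points in general dimension without a priori convexity hypotheses, and is presumably where the detailed asymptotic analysis of this paper enters.
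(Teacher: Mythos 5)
Your proposal has a fundamental misidentification of the singularity models, which derails the entire strategy. You claim that under $\rstar{n,\Lambda}$ and $\rsstar{n,\Lambda}$, the only non-flat self-shrinkers in $\mathcal{S}_n(\Lambda)$ are, up to rigid motion and dilation, $\mathbb{S}^n$ and $\mathbb{S}^{n-1}\times\Real$. This is false, and neither hypothesis gives you such a classification. The hypothesis $\rsstar{n,\Lambda}$ concerns self-shrinkers one dimension \emph{down}, in $\Real^n$; the hypothesis $\rstar{n,\Lambda}$ concerns \emph{minimal} cones, which are a different object from the (typically non-minimal) asymptotic cones of self-shrinkers. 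What the hypotheses actually yield (via \cite[Proposition 3.3]{BWGT}, restated here as Proposition~\ref{CondTangentFlowProp}) is only a dichotomy: every shrinker in $\mathcal{S}_n(\Lambda)$ is either closed (hence isotopic to $\mathbb{S}^n$) or \emph{asymptotically conical}, with no further structural restriction. Moreover, after the entropy-reduction step one has $\lambda < \Lambda - \epsilon_0 < \lambda_{n-1} = \lambda[\mathbb{S}^{n-1}\times\Real]$, so cylindrical singularities in fact cannot occur at all. Your whole second step --- the mean-convex neighborhood theorem, the neck-surgery, the ambient connected-sum decomposition --- applies only to the cylindrical picture and has no analogue for a general asymptotically conical shrinker, whose tangent flow need not even be smooth forward in time.

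The genuine difficulty, and the content of the paper, is what happens when the flow passes through an asymptotically conical singularity: the forward-in-time tangent flow is not determined by Huisken monotonicity alone. The paper handles this with the forward monotonicity/relative entropy machinery of \cite{BWExpanderRelEnt} to produce a self-expander coming out of the cone, and then the topological uniqueness theorem for low-entropy self-expanders \cite{BWUniqueExpander} to show the isotopy class is unchanged across the singular time (Proposition~\ref{BasicRegProp}). A further serious gap in your plan is the assertion that the number of singular times before extinction is finite by ``standard compactness.'' This is not available here: singularities can accumulate into the past of a conical singular time. The paper instead establishes that the set of singular times has Lebesgue measure zero (Theorem~\ref{AlmostACIsotopyThm}) and controls the iterated blowups through an entropy-quantization/bubble-tree argument (Proposition~\ref{QuantaProp}, Theorem~\ref{BubbleTreeThm}). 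In short, the obstacle you flag at the end (upgrading a cylindrical neck to a quantitative ambient neck) is not the obstacle; the obstacle is that the singularities are not cylindrical, and a qualitatively different toolkit --- self-expanders, forward monotonicity, and a bubble-tree decomposition --- is required.
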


\begin{rem} 
By the results of \cite{BWInvent} and \cite{JZhu}, there does \emph{not} exist a closed hypersurface $\Sigma$ so that $\lambda[\Sigma]\leq\lambda_n$ unless $\Sigma$ is a round sphere. Thus, we require $\Lambda>\lambda_n$ in order to make Theorem \ref{MainCondThm} non-trivial. 
\end{rem}

\begin{rem}
When $n=3$, Marques-Neves' proof of the Willmore conjecture \cite[Theorem B]{MarquesNeves} and our earlier result \cite[Corollary 1.2]{BWDuke} ensure that ($\star_{3,\lambda_2}$) and ($\sstar_{3,\lambda_2}$) hold. Thus, Theorem \ref{MainTopThm} is a corollary of Theorem \ref{MainCondThm}.
\end{rem}

As in our previous work \cite{BWInvent, BWDuke, BWGT}, the tool we use is the mean curvature flow. Specifically, we use a weak formulation of mean curvature flow (see \cite{CGG}, \cite{ES1,ES2, ES3, ES4} and \cite{IlmanenElliptic}). What is new in the current paper is that we now use a careful analysis of low entropy self-expanders of mean curvature flow carried out in \cite{BWUniqueExpander} -- which in turn builds on work done in \cite{BWBanach, BWProper, BWIntDegree, BWExpanderRelEnt, BWMinMax} -- in order to understand how certain singularities of the flow resolve. In previous work, we considered only properties of low entropy self-shrinkers -- i.e., we analyzed only how singularities formed.

Recall, that a \emph{mean curvature flow} is a one-parameter family of hypersurfaces $\Sigma_t\subset\mathbb{R}^{n+1}$ that satisfies
\begin{equation}\label{MCF}
\left(\frac{\partial\mathbf{x}}{\partial t}\right)^\perp=\mathbf{H}_{\Sigma_t}.
\end{equation}
A \emph{self-expander} is a hypersurface $\Gamma\subset\mathbb{R}^{n+1}$ satisfying
\begin{equation}
\mathbf{H}_{\Gamma}-\frac{\mathbf{x}^\perp}{2}=\mathbf{0}.
\end{equation}
For a self-expander $\Gamma$ the family $\set{\sqrt{t}\, \Gamma}_{t>0}$ is an immortal solution to the mean curvature flow while for a self-shrinker $\Sigma$, i.e., a solution to \eqref{ShrinkerEqn}, the family $\set{\sqrt{-t}\, \Sigma}_{t<0}$ is an ancient solution. A fundamental property of the mean curvature flow is that the flow starting from any closed initial hypersurfaces develops a singularity in finite time and that for many such initial hypersurfaces the flow does not disappear at this singularity and, instead, can be continued as a weak flow.

Important for our applications is that, by Huisken's monotonicity formula \cite{Huisken}, the entropy is monotone non-increasing under mean curvature flow and that singularities of the flow are modeled on self-shrinkers -- see also \cite{IlmanenSing}. As shown in \cite{BWGT}, under the hypotheses of Theorem \ref{MainTopThm}, the flow starting from a closed connected hypersurface in $\Real^4$ with small entropy will develop only asymptotically conical singularities or closed singularities before its extinction time and eventually shrinks to round points -- a similar fact is true conditionally and so applies to Theorem \ref{MainCondThm}. In fact we will show the only closed singularity occurs at the extinction time -- see Remark \ref{MainThmPfRem}.
 
By our previous work \cite{BWExpanderRelEnt}, see also \cite{AIC} and \cite{EHAnn}, self-expanders model the behavior of a flow when it emerges from a conical singularity. While Huisken's monotonicity formula implies the tangent flow is backwardly self-similar, there is currently no known reason for the forward in time behavior of the tangent flow to be that of a self-expander.  Instead, we use a forward monotonicity formula from \cite{BWExpanderRelEnt} and take a second blowup to obtain a self-expanding flow.  This is the source of certain technical difficulties because singularities may accumulate into the past. To handle this, we use a bubble-tree blowup argument familiar from other areas of geometric analysis.  Specifically, we combine such a blowup argument with \cite{BWUniqueExpander} to show the flow passing through asymptotically conical singularities is smooth away from a negligible set of times and, moreover, stays within the same isotopy class whenever it is smooth -- i.e., the isotopy class does not change as one crosses any intermediate singular time. The theorem then follows from this easily. 
 
\subsection*{Acknowledgements}
The first author was partially supported by the NSF Grants DMS-1609340 and DMS-1904674 and the Institute for Advanced Study with funding provided by the Charles Simonyi Endowment. The second author was partially supported by the NSF Grants DMS-2018221(formerly DMS-1811144) and DMS-2018220 (formerly DMS-1834824), the funding from the Wisconsin Alumni Research Foundation and a Vilas Early Career Investigator Award by the University of Wisconsin-Madison, and a von Neumann Fellowship by the Institute for Advanced Study with funding from the Z\"{u}rich Insurance Company and the NSF Grant DMS-1638352. 

\section{Preliminaries} \label{PrelimSec}
In this section, we fix notation for the rest of the paper and recall some background on mean curvature flow.  Experts should feel free to consult this section only as needed.

\subsection{Basic notions} \label{NotionSec}
Here is a list of notation that we use throughout the paper. 

\begin{tabularx}{0.95\textwidth}{lX}
$B^n_R(p)$ & the open ball in $\mathbb{R}^{n}$ centered at $p$ with radius $R$; \\
$\bar{B}^n_R(p)$ & the closed ball in $\mathbb{R}^{n}$ centered at $p$ with radius $R$; \\
$\overline{U}$ & the closure of a set $U$; \\
$\partial U$ & the topological boundary of a set $U$; \\
$\nabla_{\Sigma}$ & the covariant derivative on a Riemannian manifold $\Sigma$.
\end{tabularx}

We will omit the superscript, $n$, the dimension of a ball when it is clear from the context. We will also omit the center of a ball when it is the origin. We will omit the subscript, $\Sigma$, in the covariant derivative when it is clear from the context. 

\subsection{Weak mean curvature flow} \label{WeakMCFSec}
In \cite{Brakke}, Brakke introduces a measure-theoretic weak notion of mean curvature flow, called \emph{Brakke flow}. We use the (slightly stronger) notion introduced by Ilmanen \cite[Definition 6.3]{IlmanenElliptic}, that is a family of Radon measures in $\mathbb{R}^{n+1}$ satisfying a certain variational inequality.

For a Brakke flow $\mathcal{K}=\set{\mu_t}$, a point $(\mathbf{x}_0,t_0)\in \mathbb{R}^{n+1}\times\mathbb{R}$ and $\rho>0$, let  
$$
\mathcal{K}^{(\mathbf{x}_0,t_0),\rho}=\set{\mu_t^{(\mathbf{x}_0,t_0),\rho}}
$$
where each $\mu_t^{(\mathbf{x}_0,t_0),\rho}$ is a Radon measure in $\mathbb{R}^{n+1}$ given by
$$
\mu_t^{(\mathbf{x}_0,t_0),\rho}(U)=\rho^{-n} \mu_{t_0+\rho^{2}t}(\rho U+\mathbf{x}_0) \mbox{ for any measurable set $U$}.
$$
It is readily checked that $\mathcal{K}^{(\mathbf{x}_0,t_0),\rho}$ is also a Brakke flow. Combining the monotonicity formula \cite{Huisken} and compactness result \cite{Brakke} (see also \cite{IlmanenElliptic}), Ilmanen shows the following.

\begin{prop}[{\cite[Lemma 8]{IlmanenSing}}] \label{TangentFlowProp}
Given an integral Brakke flow $\mathcal{K}=\set{\mu_t}_{t\in (t_1,t_2)}$ with bounded area ratios, a point $(\mathbf{x}_0,t_0)\in \spt(\mathcal{K})$ with $t_0>t_1$, and a sequence $\rho_i\to 0$, there is a subsequence $\rho_{i_j}$ and an integral Brakke flow $\mathcal{T}=\set{\nu_t}_{t\in\mathbb{R}}$ so that $\mathcal{K}^{(\mathbf{x}_0,t_0),\rho_{i_j}}\to\mathcal{T}$ as Brakke flows and, moreover, $\mathcal{T}$ is backward self-similar with respect to parabolic scaling about $(\mathbf{0},0)\in\mathbb{R}^{n+1}\times\mathbb{R}$, that is, $\nu_t^{(\mathbf{0},0),\rho}=\nu_t$ for all $t<0$ and $\rho>0$, and the associated varifold $V_{\nu_{-1}}$ is the critical point of the Gaussian surface area $F$.

Such $\mathcal{T}$ is called a tangent flow to $\mathcal{K}$ at $(\mathbf{x}_0,t_0)$ and denote the set of all these tangent flows by $\mathrm{Tan}_{(\mathbf{x}_0,t_0)}\mathcal{K}$.
\end{prop}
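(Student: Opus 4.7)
The plan is the standard compactness-plus-rigidity argument: use Brakke's compactness theorem to pass to a subsequential limit of the parabolic rescalings, and then use the equality case of Huisken's monotonicity formula to force that limit to be backward self-similar.

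First, I would record that parabolic rescaling is compatible with all the structure in play. Each $\mathcal{K}^{(\mathbf{x}_0,t_0),\rho_i}$ is itself an integral Brakke flow, now defined on the dilated time interval $((t_1-t_0)/\rho_i^2,\,(t_2-t_0)/\rho_i^2)$, which exhausts all of $\mathbb{R}$ as $\rho_i\to 0^+$. Since area ratios are scale invariant, the bounded-area-ratios hypothesis transfers uniformly to the whole sequence, so the measures $\mu_t^{(\mathbf{x}_0,t_0),\rho_i}$ are uniformly locally bounded on every fixed compact spacetime cylinder about $(\mathbf{0},0)$.

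Second, I would appeal to Brakke's compactness theorem in Ilmanen's integral formulation (cf.\ \cite{IlmanenElliptic}) to extract a subsequence $\rho_{i_j}\to 0$ along which $\mathcal{K}^{(\mathbf{x}_0,t_0),\rho_{i_j}}$ converges in the sense of Brakke flows to an integral Brakke flow $\mathcal{T}=\{\nu_t\}_{t\in\mathbb{R}}$. The key point is that integrality together with the Brakke inequality are preserved in the limit once one has the uniform local mass bounds from the previous step.

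Third, the backward self-similarity comes from Huisken's monotonicity formula applied at $(\mathbf{x}_0,t_0)$. Write $\Theta(\mathcal{K},\mathbf{x}_0,t_0,r)$ for the Gaussian density of $\mathcal{K}$ at scale $r$ about that spacetime point; it is monotone non-increasing in $r$ and bounded below, hence converges to some value $\Theta_0$ as $r\to 0^+$. By scale invariance, the Gaussian density of $\mathcal{K}^{(\mathbf{x}_0,t_0),\rho}$ at $(\mathbf{0},0)$ at scale $s$ equals $\Theta(\mathcal{K},\mathbf{x}_0,t_0,\rho s)\to\Theta_0$ as $\rho\to 0$ for every fixed $s>0$. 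Passing the monotonicity quantity to the limit along the convergent subsequence yields that the Gaussian density function of $\mathcal{T}$ at $(\mathbf{0},0)$ is the constant $\Theta_0$ in the scale parameter. The rigidity case of Huisken's monotonicity — where equality in the derivative formula forces the flow to be invariant under parabolic rescaling — then implies $\nu_t^{(\mathbf{0},0),\rho}=\nu_t$ for all $t<0$ and all $\rho>0$. Evaluating at $t=-1$, this backward self-similarity is equivalent to the associated varifold $V_{\nu_{-1}}$ satisfying the shrinker equation weakly, i.e., being a critical point of $F$ in the varifold sense.

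The principal technical obstacle is making the passage of Huisken's monotonicity through Brakke flow convergence rigorous: one needs continuity (not just semicontinuity) of the Gaussian-weighted mass along the convergence, together with the measure-theoretic formulation of the rigidity case. Both rely crucially on integrality and the uniform area-ratio bound to control the contribution of the flow at infinity against the Gaussian weight.
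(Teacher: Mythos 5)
The paper does not reprove this proposition; it is quoted from Ilmanen's work, and your argument reconstructs that standard proof correctly: parabolic rescaling preserves integrality and area-ratio bounds, Brakke/Ilmanen compactness gives a subsequential limit, and the equality case of Huisken's monotonicity forces backward self-similarity (equivalently, that $V_{\nu_{-1}}$ is an $F$-stationary varifold). One small slip: Huisken's monotonicity makes the Gaussian density $\Theta(\mathcal{K},\mathbf{x}_0,t_0,r)$ monotone \emph{non-decreasing} in $r$ (equivalently, non-increasing along the flow time), so that as $r\to 0^+$ it decreases to its infimum $\Theta_0$; you wrote ``non-increasing in $r$'' while also invoking boundedness below, which is internally inconsistent but does not affect the structure of the argument.
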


A feature of Brakke flows is that they may suddenly vanish. To overcome this, Ilmanen \cite{IlmanenElliptic} introduces a notion called \emph{matching motion}, $(\mathcal{K},\tau)$ where $\mathcal{K}$ is an integral Brakke flow and $\tau$ is an $(n+1)$-current, and uses it to synthesize the Brakke flow and the level set flow (see \cite{CGG} and \cite{ES1,ES2,ES3,ES4}) as long as the latter does not fatten. As the current $\tau$ will not be used in the proof, we will omit it. Of particular importance is that S. Wang \cite[Theorem 3.5]{W} proves a compactness theorem for matching motions with entropy less than $2$.

We collect some useful facts from our previous work \cite{BWGT, BWProper}. The first is several compactness results. A hypersurface $\Sigma$ is \emph{asymptotically conical} if $\lim_{\rho\to 0^+} \rho\Sigma=\mathcal{C}$ in $C^\infty_{loc}(\mathbb{R}^{n+1}\setminus\set{\mathbf{0}})$ where $\mathcal{C}$ is a regular cone in $\mathbb{R}^{n+1}$. When this occurs denote by $\mathcal{C}=\mathcal{C}(\Sigma)$ the asymptotic cone of $\Sigma$ and by $\mathcal{L}(\Sigma)=\mathcal{C}(\Sigma)\cap\mathbb{S}^n$ the link of $\mathcal{C}(\Sigma)$. Let $\mathcal{ACH}_n$ be the set of all asymptotically conical hypersurfaces in $\mathbb{R}^{n+1}$.

\begin{prop} \label{CompactnessProp}
Fix $n\geq 3$ and $\Lambda\in (\lambda_{n},\lambda_{n-1}]$ and assume that \eqref{Assump1} and \eqref{Assump2} hold. For any $\epsilon_0\in (0,\Lambda)$, the following is true:
\begin{enumerate}
\item The set $\mathcal{ACS}_n[\Lambda-\epsilon_0]=\set{\Sigma\in\mathcal{ACH}_n\colon \mbox{$\Sigma$ is a self-expander with $\lambda[\Sigma]\leq \Lambda-\epsilon_0$}}$ is compact in $C^\infty_{loc}(\mathbb{R}^{n+1})$;
\item The set $\mathcal{L}_n[\Lambda-\epsilon_0]=\set{\mathcal{L}(\Sigma)\colon \Sigma\in\mathcal{ACS}_n[\Lambda-\epsilon_0]}$ is compact in $C^\infty(\mathbb{S}^{n})$;
\item The set $\mathcal{E}_n[\Lambda-\epsilon_0]=\set{\Sigma\in\mathcal{ACH}_n\colon\mbox{$\Sigma$ is a self-expander with $\lambda[\Sigma]\leq \Lambda-\epsilon_0$}}$ is compact in $C^\infty_{loc}(\mathbb{R}^{n+1})$.
\end{enumerate}
\end{prop}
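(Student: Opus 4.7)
The plan is to prove (1) by combining varifold compactness with the minimal cone hypothesis \eqref{Assump1} to exclude singularities in the limit, deduce (2) by a further application to the blowdowns, and note that (3) is identical to (1) as written. Fix a sequence $\{\Sigma_i\} \subset \mathcal{ACS}_n[\Lambda-\epsilon_0]$. The uniform entropy bound $\lambda[\Sigma_i] \leq \Lambda-\epsilon_0$ gives uniformly bounded area ratios at every point and every scale, so after extracting a subsequence the associated integral varifolds $V_i$ converge on compacta to an integral varifold $V_\infty$ with $\lambda[V_\infty] \leq \Lambda-\epsilon_0$. Because the self-expander equation $\hH - \xX^\perp/2 = \OO$ passes distributionally to the limit, $V_\infty$ inherits locally bounded weak mean curvature.

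The heart of the argument is to show $V_\infty$ is smooth. At any $\xX_0 \in \spt(V_\infty)$, any tangent cone $\mathcal{C}$ is a stationary integral cone, since the $\xX^\perp/2$ term is lower order under blowup; it has entropy at most $\lambda[V_\infty] < \Lambda$. After splitting off any Euclidean factors, $\mathcal{C}$ is a cone in $\R^{k+1}$ for some $1 \leq k \leq n$. Hypothesis \eqref{Assump1}, together with the trivial cases $k = 1, 2$, rules out non-flat regular minimal cones in this entropy range, while $\Lambda \leq \lambda_{n-1} < 2$ excludes hyperplanes of multiplicity $\geq 2$. Hence $\mathcal{C}$ must be a single hyperplane, Allard's $\eps$-regularity theorem applies, and $V_\infty$ is smooth near $\xX_0$. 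Standard elliptic bootstrapping for the expander equation upgrades varifold convergence to $C^\infty_{loc}$ convergence, proving (1) and, by the same argument, (3).

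For (2), apply the same varifold compactness to the rescalings $\rho \Sigma_i$ as $\rho \to 0^+$. Their limits are the asymptotic cones $\mathcal{C}(\Sigma_i)$, which are regular minimal cones of entropy at most $\Lambda-\epsilon_0$; the minimal surface equation combined with \eqref{Assump1} places them in a compact family in $C^\infty_{loc}(\R^{n+1}\setminus\{\OO\})$, and their links $\mathcal{L}(\Sigma_i) \subset \mathbb{S}^n$ in a compact family in $C^\infty(\mathbb{S}^n)$. The main obstacle I anticipate is producing uniform-in-$i$ smooth convergence of the rescalings themselves, which is needed to transfer the cone compactness back to the link compactness claim; I expect to handle it by exhibiting each $\Sigma_i$, outside a controlled ball, as a normal graph over $\mathcal{C}(\Sigma_i)$ whose defining function satisfies a linearized expander equation with drift $\tfrac{1}{2}\xX \cdot \nabla$, and then leveraging the compactness of $\mathcal{L}_n[\Lambda-\epsilon_0]$ to obtain uniform exponential decay estimates via the maximum principle.
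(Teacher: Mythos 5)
First, note that the paper does not prove this proposition at all: it is quoted from earlier work, with (1) and (2) cited to Corollary 3.4 and Proposition 3.5 of \cite{BWGT} and (3) to Theorem 1.1 of \cite{BWProper}. So your attempt is necessarily a from-scratch reproof of those results, and it has genuine gaps. Also be aware that the identical wording of (1) and (3) is a typo in the statement: $\mathcal{ACS}_n[\Lambda-\epsilon_0]$ is used throughout the rest of the paper (Propositions \ref{BasicRegProp}, \ref{QuantaProp}, Theorem \ref{BubbleTreeThm}) as the class of asymptotically conical self-\emph{shrinkers} (it feeds the time $-1$ slice of a backward self-similar flow), while (3) is the self-expander class. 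Your interior-regularity argument is written only for the expander equation; the intended claim (1) concerns the $F$-stationary (shrinker) setting, where \eqref{Assump2} must enter --- your proof never uses \eqref{Assump2}, which is a sign something is missing. Relatedly, your tangent-cone step is glossed: \eqref{Assump1} only excludes cones with \emph{smooth} links, so to rule out singular tangent cones you need the full Federer/Almgren dimension reduction through all dimensions $3\leq k\leq n$ (that is exactly why \eqref{Assump1} is assumed for every such $k$); ``after splitting off any Euclidean factors'' is not yet that induction.

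The more serious errors are in (2) and in the closedness of the classes. Your argument for (2) asserts that the asymptotic cones $\mathcal{C}(\Sigma_i)$ are \emph{regular minimal cones}. They are not: the blow-downs of asymptotically conical shrinkers or expanders are regular cones but in general have nonvanishing mean curvature (the expander/shrinker equation degenerates under the rescaling, leaving no equation for the cone). Indeed, if they were minimal, \eqref{Assump1} would force every $\mathcal{C}(\Sigma_i)$ to be a hyperplane, which would trivialize the entire class and contradict, e.g., the role of $\mathcal{C}(\Sigma)$ in Proposition \ref{CondTangentFlowProp}. So compactness of the links cannot be extracted from \eqref{Assump1} applied to the cones, and your proposed repair --- ``leveraging the compactness of $\mathcal{L}_n[\Lambda-\epsilon_0]$'' --- is circular, since that compactness is precisely assertion (2). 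More generally, even granting your interior regularity and $C^\infty_{loc}$ subconvergence, you have not shown the limit lies in the same class: you must prove the limit is again \emph{asymptotically conical} (for shrinkers this is where \eqref{Assump2} is used to exclude other end behavior), and you need a uniform scale at which every $\Sigma_i$ is a graph over its cone with uniform estimates in order for the links to converge in $C^\infty(\mathbb{S}^n)$. This uniform control of the ends is the actual content of \cite[Corollary 3.4, Proposition 3.5]{BWGT} and \cite[Theorem 1.1]{BWProper}, and it is absent from the proposal.
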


\begin{proof}
The first and second claim are, respectively, Corollary 3.4 and Proposition 3.5 of \cite{BWGT}. The last is Theorem 1.1 of \cite{BWProper}.
\end{proof}

The next proposition summarizes some properties about tangent flows of low entropy.

\begin{prop} \label{CondTangentFlowProp}
Fix $n\geq 3$ and $\Lambda\in (\lambda_{n},\lambda_{n-1}]$ and assume that \eqref{Assump1} and \eqref{Assump2} hold. If $\mathcal{T}=\set{\nu_t}_{t\in\mathbb{R}}$ is a matching motion in $\mathbb{R}^{n+1}$ such that $\nu_{-1}=\mathcal{H}^n\lfloor\Sigma$ for $\Sigma\in\mathcal{S}_n(\Lambda)$, then the following is true:
\begin{enumerate}
\item $\Sigma$ is either smoothly isotopic to $\mathbb{S}^n$ or asymptotically conical;
\item If $\Sigma$ is asymptotically conical and $\lambda[\Sigma]\leq\Lambda-\epsilon_0$ for some $\epsilon_0>0$, then there is a radius $R_0=R_0(n,\Lambda,\epsilon_0)>1$ and a constant $C_0=C_0(n,\Lambda,\epsilon_0)>0$ so that for each $|t|\leq 1$ there is a function $v_t\colon\mathcal{C}(\Sigma)\setminus B_{R_0}\to \mathbb{R}$ satisfying
$$
\sup_{\mathcal{C}(\Sigma)\setminus B_{R_0}} \sum_{i=0}^2 |\mathbf{x}|^{i+1} |\nabla^i v_t| \leq C_0
$$
and so that
$$
\spt(\nu_t)\setminus B_{2R_0}\subseteq\set{\mathbf{x}(p)+v_t(p)\mathbf{n}_{\mathcal{C}(\Sigma)}(p)\colon p\in\mathcal{C}(\Sigma)\setminus B_{R_0}} \subseteq\spt(\nu_t).
$$
\end{enumerate}
\end{prop}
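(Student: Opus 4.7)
My plan is to treat the two parts separately, relying on the classification of low entropy self-shrinkers in \cite{BWGT} together with the structure theory of low entropy self-expanders from \cite{BWProper, BWExpanderRelEnt, BWUniqueExpander} and the compactness statements in Proposition \ref{CompactnessProp}.

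For part (1), the first step is to use the assumptions \eqref{Assump1} and \eqref{Assump2} to eliminate cylindrical ends and singular asymptotic cones for $\Sigma$. Any cylindrical end of $\Sigma$ has a cross section that is a non-flat self-shrinker in $\mathbb{R}^{n}$ of entropy at most $\lambda[\Sigma]<\Lambda$, violating \eqref{Assump2}. Similarly, any singular asymptotic cone of $\Sigma$ contributes a non-flat regular minimal cone in some $\mathbb{R}^{k}$ with $3\leq k\leq n$ and entropy below $\Lambda$, violating \eqref{Assump1}. Combined with the end-structure results of \cite{BWGT}, this forces $\Sigma$ to be either compact or asymptotically conical with regular asymptotic cone. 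In the compact case, the main topology result of \cite{BWGT}, specialized to self-shrinkers with strict entropy bound $\lambda[\Sigma]<\Lambda$, ensures that $\Sigma$ is smoothly isotopic to $\mathbb{S}^{n}$.

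For part (2), I would split $|t|\leq 1$ into the regimes $[-1,0)$, $\set{0}$, and $(0,1]$. On $[-1,0)$, uniqueness of smooth Brakke flows from smooth initial data gives $\nu_{t}=\mathcal{H}^{n}\lfloor\sqrt{-t}\,\Sigma$; since the asymptotic cone $\mathcal{C}(\Sigma)$ is dilation-invariant, the graphical function $v_{t}$ representing $\sqrt{-t}\,\Sigma$ over $\mathcal{C}(\Sigma)$ outside a ball is obtained from the analogous graphical function for $\Sigma$ by a simple rescaling. The scale-invariant $C^{2}$ bounds, uniform in $\Sigma\in\mathcal{ACS}_{n}[\Lambda-\epsilon_{0}]$, then come from Proposition \ref{CompactnessProp}(1)(2). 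At $t=0$, continuity of the support together with the convergence $\sqrt{-t}\,\Sigma\to\mathcal{C}(\Sigma)$ on compact subsets of $\mathbb{R}^{n+1}\setminus\set{\mathbf{0}}$ force $\spt(\nu_{0})$ to coincide with $\mathcal{C}(\Sigma)$ outside the origin, so $v_{0}\equiv 0$.

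For $t\in (0,1]$, the matching motion is emerging from the conical singularity at $(\mathbf{0},0)$. My plan is to apply the forward monotonicity formula and second-blowup procedure of \cite{BWExpanderRelEnt} to produce an asymptotically conical self-expander $\Gamma\in\mathcal{E}_{n}[\Lambda-\epsilon_{0}]$ with $\mathcal{C}(\Gamma)=\mathcal{C}(\Sigma)$ modeling the flow, and then invoke the uniqueness theorem of \cite{BWUniqueExpander} to conclude that $\Gamma$ is determined by $\mathcal{C}(\Sigma)$ and that the matching motion agrees with $\nu_{t}=\mathcal{H}^{n}\lfloor\sqrt{t}\,\Gamma$ outside a fixed ball about the origin. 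The required graphical estimate then follows by dilating the graphical representation of $\Gamma$ over $\mathcal{C}(\Gamma)$ at infinity and using the $C^{\infty}_{loc}$-compactness of $\mathcal{E}_{n}[\Lambda-\epsilon_{0}]$ from Proposition \ref{CompactnessProp}(3). The main technical hurdle is precisely this forward regime: the forward-in-time tangent flow at a singularity is not automatically self-similar, and identifying a definite self-expander together with upgrading sub-convergence to uniform smooth convergence at infinity requires both the forward monotonicity of \cite{BWExpanderRelEnt} and the uniqueness of low entropy asymptotically conical self-expanders from \cite{BWUniqueExpander}.
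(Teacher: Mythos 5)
Your treatment of part (1) and of $t\in[-1,0]$ in part (2) is consistent in substance with the paper, which simply cites \cite[Theorem 0.7]{CIMW} and \cite[Proposition 3.3]{BWGT} for part (1), and \cite[Corollary 3.6]{BWGT} together with the compactness in Proposition~\ref{CompactnessProp} for the backward-in-time graphical estimates.

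For $t\in(0,1]$, however, there is a genuine gap. You claim that, after producing a self-expander $\Gamma$ via the forward monotonicity and second blowup of \cite{BWExpanderRelEnt}, the matching motion itself ``agrees with $\nu_t=\mathcal{H}^n\lfloor\sqrt{t}\,\Gamma$ outside a fixed ball about the origin,'' with $\Gamma$ ``determined by $\mathcal{C}(\Sigma)$.'' Neither of these is available. First, as the introduction of the paper stresses, there is no known reason for the forward-in-time behavior of a matching motion coming out of a conical singularity to be self-expanding; the second-blowup procedure only produces a self-expander as a further parabolic rescaling limit, and does not identify $\nu_t$ itself (even outside a ball) with a self-expanding flow. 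Second, the main theorem of \cite{BWUniqueExpander} is a \emph{topological} uniqueness result -- it says that low-entropy asymptotically conical self-expanders with the same asymptotic cone are a.c.-isotopic, not that they coincide -- so $\Gamma$ is not determined by $\mathcal{C}(\Sigma)$ in the sense you need. Even setting aside these two issues, the path you sketch is much heavier than what the statement requires: Proposition~\ref{CondTangentFlowProp}(2) asserts only a scale-invariant $C^2$ graphicality of $\spt(\nu_t)$ over $\mathcal{C}(\Sigma)$ outside a fixed ball $B_{R_0}$, not any structural claim about the evolution.

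The paper's argument for $t>0$ is considerably more elementary. Since $\Sigma\in\mathcal{ACS}_n[\Lambda-\epsilon_0]$, the compactness in Proposition~\ref{CompactnessProp}(1)--(2) gives a uniform graphical representation of $\Sigma$ over $\mathcal{C}(\Sigma)$ outside a ball of controlled radius. Pseudo-locality \cite[Theorem 1.5]{INS} then propagates graphicality of $\spt(\nu_t)$ over $\mathcal{C}(\Sigma)$ outside a fixed $B_{R_0}$ forward in time across the conical singularity, uniformly for $|t|\leq 1$, and the Ecker--Huisken interior estimate \cite{EHInvent} upgrades this to the stated $C^2$ bound with $R_0,C_0$ depending only on $n,\Lambda,\epsilon_0$. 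The self-expander machinery of \cite{BWExpanderRelEnt} and \cite{BWUniqueExpander} is not needed here; it is used later, in Proposition~\ref{BasicRegProp} and Theorem~\ref{BubbleTreeThm}, where the finer structure of $\nu_t$ for small $t>0$ and the isotopy class of $\spt(\nu_t)$ are actually at stake.
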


\begin{proof}
The first claim follows from \cite[Theorem 0.7]{CIMW}\footnote{Although the result states for diffeomorphisms, the proof indeed gives smooth isotopies.} and \cite[Proposition 3.3]{BWGT}. The second is Corollary 3.6 of \cite{BWGT} for $t<0$ while, for $t>0$, follows from Proposition \ref{CompactnessProp}, the pseudo-locality \cite{INS} and the interior regularity \cite{EHInvent}.
\end{proof}

\subsection{Isotopies and related concepts} \label{IsotopySec}
We say two smooth embeddings $\mathbf{f}_0, \mathbf{f}_1\colon M\to \Real^{n+1}$ are \emph{isotopic} if there is a continuous map $\mathbf{F}\colon [0,1]\to C^\infty (M; \Real^{n+1})$ so that $\mathbf{F}(0)=\mathbf{f}_0$, $\mathbf{F}(1)=\mathbf{f}_1$ and, for each $\tau\in [0,1]$, $\mathbf{F}(\tau)$ is an embedding. Two hypersurfaces $\Sigma_0, \Sigma_1\subset \Real^{n+1}$ are \emph{isotopic} if there exist smooth embeddings $\mathbf{f}_0\colon M\to \Sigma_0$ and $\mathbf{f}_1\colon M\to \Sigma_1$ so that $\mathbf{f}_0$ and $\mathbf{f}_1$ are isotopic.

Fix a $\delta\in (0,1)$. Two hypersurfaces $\Sigma_0, \Sigma_1\subset B_{4R}(p)$ are \emph{$\delta$-isotopic} if there are smooth embeddings $\mathbf{f}_0\colon M\to \Sigma_0$ and $\mathbf{f}_1\colon M\to \Sigma_1$ and a continuous map $\mathbf{F}\colon [0,1]\to C^\infty(M; \Real^{n+1})$ so that 
\begin{enumerate}
\item $\mathbf{F}(0)=\mathbf{f}_0$ and $\mathbf{F}(1)=\mathbf{f}_1$;
\item $\mathbf{F}(\tau)$ is an embedding for each $\tau\in [0,1]$;
\item $\mathbf{F}(\tau)\circ \mathbf{f}_0^{-1}(B_R(p))\subset B_{2R}(p)$ for each $\tau\in [0,1]$;
\item For each $\tau\in [0,1]$,
$$
\sup_{\Sigma_0\cap (B_{4R}(p)\backslash B_{R}(p))} \sum_{i=0}^1 R^{i-1}\left| \nabla^i_{\Sigma_0} (\mathbf{F}(\tau)\circ \mathbf{f}_0^{-1})-\nabla^i_{\Sigma_0}\mathbf{x}|_{\Sigma_0}\right|\leq \delta.
$$
\end{enumerate}

Fix a unit vector $\mathbf{e}$, a point $\mathbf{x}_0\in\mathbb{R}^{n+1}$ and $r,h>0$. Let 
$$
C_{\mathbf{e}}(\mathbf{x}_0,r,h)=\set{\mathbf{x}\in\mathbb{R}^{n+1}\colon |(\mathbf{x}-\mathbf{x}_0)\cdot\mathbf{e}|<h, |\mathbf{x}-\mathbf{x}_0|^2<r^2+|(\mathbf{x}-\mathbf{x}_0)\cdot\mathbf{e}|^2}
$$
be the solid open cylinder with axis $\mathbf{e}$ centered at $\mathbf{x}_0$ and of radius $r$ and height $2h$. A hypersurface $\Sigma$ is a \emph{$C^2$ $\mathbf{e}$-graph of size $\delta$ on scale $r$ at $\mathbf{x}_0$} if there is a function $f\colon B^n_r\subset P_{\mathbf{e}}\to\mathbb{R}$ with
$$
\sum_{i=0}^2 r^{i-1} \Vert\nabla^i f\Vert_{C^0}< \delta,
$$
where $P_\mathbf{e}$ is the $n$-dimensional subspace of $\mathbb{R}^{n+1}$ normal to $\mathbf{e}$, so that 
$$
\Sigma\cap C_{\mathbf{e}}(\mathbf{x}_0, r, \delta r)=\set{\mathbf{x}_0+\mathbf{x}(x)+f(x)\mathbf{e}\colon x\in B_r^n}.
$$

\begin{lem} \label{IsotopyGluingLem}
Let $B_{4r_1}(p_1),\ldots, B_{4r_J}(p_J)$ be pairwise disjoint open balls in $\mathbb{R}^{n+1}$ and assume $r_j\geq r_1$ for each $1\leq j\leq J$. Let $\Sigma_0\subset\mathbb{R}^{n+1}$ be a  hypersurface that is a $C^2$ $\mathbf{n}_{\Sigma_0}$-graph of size $1$ on scale $c_0r_1$ at every $p\in \Sigma_0\setminus\bigcup_{j=1}^J B_{r_1}(p_j)$. Then there is a sufficiently small $\delta_0=\delta_0(n,c_0)>0$ so that if a hypersurface $\Sigma_1\subset\mathbb{R}^{n+1}$ satisfies:
\begin{enumerate}
\item $\Sigma_0\cap B_{4r_j}(p_j)$ is $\delta_0$-isotopic to $\Sigma_1 \cap B_{4r_j}(p_j)$ for each $1\leq j\leq J$;
\item There is a continuous family of functions $u_\tau\colon \Sigma_0\setminus\bigcup_{j=1}^J B_{r_1}(p_j)\to\mathbb{R}$ for $\tau\in [0,1]$ with $u_\tau=0$ and
$$
\sup_{ \Sigma_0\setminus \bigcup_{j=1}^J B_{r_1}(p_j)} \sum_{i=0}^1 r_1^{i-1} |\nabla^i_{\Sigma_0} u_\tau |\leq \delta_0
$$
and so that
$$
\Sigma_1\backslash \bigcup_{j=1}^J B_{2r_1}(p_j)\subseteq\set{\mathbf{x}(p)+u_1(p)\mathbf{n}_{\Sigma_0}(p) \colon p\in \Sigma_0\backslash \bigcup_{j=1}^J B_{r_1}(p_j)}\subseteq\Sigma_1,
$$
\end{enumerate}
then there is an isotopy $\mathbf{F}\colon [0,1]\to C^\infty(\Sigma_0;\mathbb{R}^{n+1})$ with $\mathbf{F}(0)=\mathbf{x}|_{\Sigma_0}$ and $\mathbf{F}(1)(\Sigma_0)=\Sigma_1$ and so that, for each $\tau\in [0,1]$, 
\begin{enumerate}
\item $\mathbf{F}(\tau)(B_{4r_j}(p_j))\subset B_{8r_j}(p_j)$ for each $1\leq j\leq J$;
\item $\mathbf{F}(\tau)(p)=\mathbf{x}(p)+ u_{\ell(\tau)}(p)\mathbf{n}_{\Sigma_0}(p)$ for $p\in\Sigma_0\setminus\bigcup_{j=1}^J B_{4r_j}(p_j)$ where $\ell\colon [0,1]\to [0,1]$ is a continous function.
\end{enumerate}
\end{lem}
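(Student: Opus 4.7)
The plan is to construct $\mathbf{F}(\tau)$ by interpolating, in normal coordinates around $\Sigma_0$, between the local $\delta_0$-isotopies $\mathbf{F}_j(\tau)$ from hypothesis (1) --- defined on $\Sigma_0 \cap B_{4r_j}(p_j)$ --- and the global normal graph isotopy $\mathbf{G}_\tau(p) := \mathbf{x}(p) + u_\tau(p)\mathbf{n}_{\Sigma_0}(p)$ from hypothesis (2) --- defined on $\Sigma_0 \setminus \bigcup_{j=1}^J B_{r_1}(p_j)$. Both maps are available on the overlap annular collar $\Sigma_0 \cap (B_{4r_j}(p_j)\setminus B_{r_j}(p_j))$ (using $r_j \geq r_1$) and there both are $C^1$-close to the identity map $\mathbf{x}|_{\Sigma_0}$ with scale-invariant bound $\delta_0$: the former by the definition of $\delta_0$-isotopy, the latter directly by (2). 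The $C^2$ $\mathbf{n}_{\Sigma_0}$-graph-of-size-$1$ condition at scale $c_0 r_1$ furnishes a tubular neighborhood of width comparable to $c_0 r_1$ in which all such small perturbations admit unique normal coordinates.

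In the collar, I would first decompose $\mathbf{F}_j(\tau)(p) = \mathbf{x}(P_j(\tau, p)) + h_j(\tau, p)\mathbf{n}_{\Sigma_0}(P_j(\tau, p))$, where $P_j(\tau,\cdot)$ is a diffeomorphism $C^1$-close to the identity and $h_j(\tau,\cdot)$ is a $C^1$-small height function. The critical matching identity, obtained from $\mathbf{F}_j(1)(\Sigma_0 \cap B_{4r_j}(p_j)) = \Sigma_1 \cap B_{4r_j}(p_j)$ together with the graph description of $\Sigma_1$ over $\Sigma_0 \setminus B_{r_1}(p_j)$, is $h_j(1, p) = u_1(P_j(1, p))$ throughout the collar.

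Next, glue via a smooth cutoff $\chi_j$ on $\Sigma_0$ supported in $\Sigma_0 \cap B_{3r_j}(p_j)$ with $\chi_j \equiv 1$ on $\Sigma_0 \cap B_{2r_j}(p_j)$, and a continuous reparametrization $\ell : [0,1]\to[0,1]$ with $\ell(0)=0$, $\ell(1)=1$. In the collar, interpolate the base parametrization via the exponential map of the induced metric on $\Sigma_0$ by setting $Q(\tau, p) := \exp_p(\chi_j(p)\,\exp_p^{-1}(P_j(\tau, p)))$, and choose a height function $H(\tau, p)$ so that: (i) $H(\tau, p) = h_j(\tau, p)$ on $\Sigma_0 \cap B_{2r_j}(p_j)$; (ii) $H(\tau, p) = u_{\ell(\tau)}(p)$ on $\Sigma_0 \setminus B_{3r_j}(p_j)$; and (iii) at $\tau = 1$, $H(1, p) = u_1(Q(1, p))$ throughout the interpolation region --- this is consistent with (i) thanks to the matching identity. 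Define $\mathbf{F}(\tau)(p) = \mathbf{x}(Q(\tau, p)) + H(\tau, p)\mathbf{n}_{\Sigma_0}(Q(\tau, p))$ in the collar, $\mathbf{F}(\tau) = \mathbf{F}_j(\tau)$ inside $B_{2r_j}(p_j)$, and $\mathbf{F}(\tau) = \mathbf{G}_{\ell(\tau)}$ outside $\bigcup_j B_{3r_j}(p_j)$. By construction $\mathbf{F}(1)(p)$ is, pointwise on the collar, the normal graph of $u_1$ over $Q(1, p) \in \Sigma_0 \setminus B_{r_1}(p_j)$, hence lies in $\Sigma_1$, and the three pieces fit together at $\partial B_{2r_j}(p_j)$ and $\partial B_{3r_j}(p_j)$.

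The main obstacle is precisely ensuring $\mathbf{F}(1)(\Sigma_0) = \Sigma_1$ rather than merely $C^0$-close: a naive convex combination of $\mathbf{F}_j(1)$ and $\mathbf{G}_1$ in $\mathbb{R}^{n+1}$ would fail, since a chord between two points of $\Sigma_1$ generally leaves $\Sigma_1$. The normal-coordinate gluing sidesteps this by pinning the image at $\tau = 1$ to be the normal graph of $u_1$ over an interpolated base in $\Sigma_0$, which automatically lies in $\Sigma_1$; the matching identity $h_j(1, \cdot) = u_1 \circ P_j(1, \cdot)$ is exactly what makes this pinning consistent across the boundary of $B_{2r_j}(p_j)$. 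With this in hand, for $\delta_0 \leq \delta_0(n, c_0)$ sufficiently small, each $\mathbf{F}(\tau)$ is a $C^1$-close perturbation of the embedding $\mathbf{x}|_{\Sigma_0}$ of the compact manifold $\Sigma_0$, hence is itself a smooth embedding; and the containment $\mathbf{F}(\tau)(B_{4r_j}(p_j)) \subset B_{8r_j}(p_j)$ follows from the $C^0$-smallness of all the interpolants together with the built-in condition $\mathbf{F}_j(\tau)(B_{r_j}(p_j)) \subset B_{2r_j}(p_j)$ from the definition of $\delta_0$-isotopy.
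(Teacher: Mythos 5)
Your construction follows the same gluing skeleton as the paper's proof --- the local isotopies $\mathbf{F}_j$ inside the balls, the normal-graph motion $p\mapsto \mathbf{x}(p)+u_\tau(p)\mathbf{n}_{\Sigma_0}(p)$ outside, spliced over intermediate annuli by cutoffs --- but it handles the one genuinely delicate point, landing exactly on $\Sigma_1$ at $\tau=1$, by a different mechanism. The paper simply takes the partition-of-unity convex combination $\tilde{\mathbf{F}}(\tau)=\sum_j\phi_j\mathbf{F}_j(\tau)$ in $\mathbb{R}^{n+1}$, accepts that $\tilde{\mathbf{F}}(1)(\Sigma_0)$ is only a hypersurface $\Sigma_1'$ which is $C^1$-close to $\Sigma_1$, and then concatenates with a correction isotopy from $\Sigma_1'$ to $\Sigma_1$; so the ``chord leaves $\Sigma_1$'' phenomenon you flag is not a failure of that route, merely the reason for its final correction step. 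Your alternative --- decomposing $\mathbf{F}_j(\tau)$ over $\Sigma_0$ into a base motion $P_j$ and a height $h_j$, and pinning the time-one height to $u_1\circ Q(1,\cdot)$ via the matching identity $h_j(1,\cdot)=u_1\circ P_j(1,\cdot)$ --- builds exactness into the gluing and dispenses with the correction isotopy, at the price of needing the tubular-neighborhood decomposition on the transition annuli. Both routes implicitly use that the $C^0$ displacement allowed by the $\delta_0$-isotopy (which is of size $\delta_0 r_j$) is small relative to the graphical scale $c_0r_1$ of $\Sigma_0$; this is harmless in the paper's applications, where all $r_j$ coincide, and the paper's own one-paragraph proof is no more explicit about it, so I do not count it against you.

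Two points you should tighten. First, $\Sigma_0$ is not assumed compact (in the applications it is asymptotically conical), so ``a $C^1$-small perturbation of an embedding of a compact manifold is an embedding'' is not available; instead use the hypothesis that $\Sigma_0$ is a $C^2$ graph of size $1$ on scale $c_0r_1$ at every point outside the $B_{r_1}(p_j)$, which gives a uniform two-sided clearance (no second sheet enters the graph cylinder) and hence embeddedness and properness of all your glued maps once $\delta_0=\delta_0(n,c_0)$ is small; the same clearance is what legitimizes your use of $\exp_p^{-1}(P_j(\tau,p))$. Second, your argument as written only yields $\mathbf{F}(1)(\Sigma_0)\subseteq\Sigma_1$, while the lemma requires equality: since $\mathbf{F}(1)$ is a proper embedding of an $n$-manifold into the $n$-manifold $\Sigma_1$, its image is open and closed, so you must still check it meets every component of $\Sigma_1$ (every point of $\Sigma_1$ lies either on the graph of $u_1$ or in $\mathbf{F}_j(1)(\Sigma_0\cap B_{4r_j}(p_j))$ with preimage in the region where $\mathbf{F}(1)=\mathbf{F}_j(1)$), or else append, as the paper effectively does, a final small isotopy within the graph region. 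Relatedly, place the cutoff transition a bit further out than $\partial B_{2r_j}(p_j)$ (say between $\tfrac{5}{2}r_j$ and $3r_j$), so that images of transition points are guaranteed to stay outside $B_{2r_1}(p_j)$, where hypothesis (2) actually identifies $\Sigma_1$ with the graph of $u_1$; as written, the matching identity can fail marginally near $\partial B_{2r_j}(p_j)$ when $r_j=r_1$.
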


\begin{proof}
Define $\mathbf{F}_0(\tau)\colon \Sigma_0\backslash \bigcup_{j=1}^J B_{r_1}(p_j)\to\mathbb{R}^{n+1}$ by 
$$
\mathbf{F}_0(\tau)=\mathbf{x}(p) + u_\tau (p)\mathbf{n}_{\Sigma_0}(p).
$$
By the hypotheses on $\Sigma_0$ and $u_\tau$, for $\delta_0$ sufficiently small $\mathbf{F}_0(\tau)$ is an embedding for each $\tau\in [0,1]$.

Set $U_0=\mathbb{R}^{n+1}\setminus\bigcup_{j=1}^J \bar{B}_{3r_j}(p_j)$ and $U_j=B_{4r_j}(p_j)$ for $1\leq j \leq J$. Let $\set{\phi_j}_{0\leq j\leq J}$ be a partition of unity subordinate to the open cover $\set{U_j}_{0\leq j\leq J}$ so $|\nabla\phi_j|\leq 2r_1^{-1}$ for each $j$. For $\tau\in [0,1]$, we define $\tilde{\mathbf{F}}(\tau)\colon \Sigma_0\to \Real^{n+1}$ by
$$
\tilde{\mathbf{F}}(\tau)=\sum_{j=0}^J \phi_j \mathbf{F}_j(\tau)
$$
where $\mathbf{F}_j$ for $1\leq j\leq J$ are the $\delta_0$-isotopies with $\mathbf{F}_j(0)=\mathbf{x}|_{\Sigma_0\cap U_j}$ that the hypotheses ensure exist.

Up to shrinking $\delta_0$, one has that $\tilde{\mathbf{F}}(\tau)$ is an embedding for every $\tau\in [0,1]$, $\tilde{\mathbf{F}}(0)=\mathbf{x}|_{\Sigma_0}$ and $\Sigma_1^\prime=\tilde{\mathbf{F}}(1)(\Sigma_0)$ is sufficiently close, in the $C^1$ topology, to $\Sigma_1$ so they are isotopic. Combining this isotopy with $\tilde{\mathbf{F}}$ gives the desired isotopy between $\Sigma_0$ and $\Sigma_1$.
\end{proof}

We will need a notion of a.c.-isotopies between asymptotically conical hypersurfaces which is closely related to those introduced in \cite[Section 2]{BWUniqueExpander}. Fix an asymptotically conical hypersurface $\Gamma\subset\mathbb{R}^{n+1}$ with asymptotic cone $\mathcal{C}=\mathcal{C}(\Gamma)$. Thus, for some $R>1$ large enough, $\pi_{\mathcal{C}}$ -- the nearest point projection onto $\mathcal{C}$ -- restricts to a diffeomorphism of $\Gamma_R=\Gamma\setminus\bar{B}_R$ onto image and denote its inverse by $\theta_{\Gamma_R}$. For any integer $k\geq 2$, let $\mathcal{ACE}^k_n(\Gamma)$ be the space of $C^k$-asymptotically conical $C^k$ embeddings of $\Gamma$ into $\mathbb{R}^{n+1}$, i.e., $C^k$ embeddings $\mathbf{g}\colon\Gamma\to\mathbb{R}^{n+1}$ so that $\lim_{\rho\to 0^+} \rho (\mathbf{g}\circ\theta_{\Gamma_R})(\rho^{-1} p)=\mathbf{h}(p)$ in $C^k_{loc}(\mathcal{C}\setminus\set{\mathbf{0}})$ where $\mathbf{h}\colon\mathcal{C}\to \mathbb{R}^{n+1}$ is a homogeneous of degree one (i.e., $\rho\mathbf{h}(\rho^{-1} p)=\mathbf{h}(p)$ for all $p\in\mathcal{C}$ and $\rho>0$) $C^k $ embedding and denote by $\mathrm{tr}_\infty^1[\mathbf{g}]=\mathbf{h}|_{\mathcal{C}\cap\mathbb{S}^n}$. Equip the space $\mathcal{ACE}^k_n(\Gamma)$ with the $C^k_1$ norm 
$$
\Vert\mathbf{g}\Vert_{C^k_1}=\sup_{p\in\Gamma} \sum_{i=0}^k (1+|\mathbf{x}(p)|)^{1-i} |\nabla^i \mathbf{g}|.
$$
We then let $\mathcal{ACE}_n(\Gamma)=\bigcap_{k\geq 2} \mathcal{ACE}^k_n(\Gamma)$ with the usual Fr\'{e}chet topology.

Two elements $\mathbf{g}_0,\mathbf{g}_1\in\mathcal{ACE}_n(\Gamma)$ are \emph{a.c.-isotopic} if there exists a continuous path $\mathbf{G}\colon [0,1]\to \mathcal{ACE}^\infty_n(\Gamma)$ with $\mathbf{G}(0)=\mathbf{g}_0$ and $\mathbf{G}(1)=\mathbf{g}_1$. Two asymptotically conical hypersurfaces $\Gamma_0,\Gamma_1\subset\mathbb{R}^{n+1}$ are \emph{a.c.-isotopic} if there are two elements $\mathbf{g}_j\in\mathcal{ACE}_n(\Gamma)$ with $\mathbf{g}_j(\Gamma)=\Gamma_j$ for $j\in\set{0,1}$ so that they are a.c.-isotopic. By composing with $\mathbf{g}_0^{-1}$, we will always take $\Gamma=\Gamma_0$ and $\mathbf{G}(0)=\mathbf{x}|_{\Gamma_0}$. Furthermore, for fixed $R>1$ and $C>0$ we say $\mathbf{G}$ is \emph{$(R,C)$-regular} if there exists a continuous family of functions $v_\tau\colon\mathcal{C}(\Gamma_0)\setminus B_{R}\to \mathbb{R}$ for $\tau\in [0,1]$ satisfying
$$
\sup_{\mathcal{C}(\Gamma_0)\setminus B_{R}}\sum_{i=0}^2 |\mathbf{x}|^{i+1} |\nabla^i v_\tau| \leq C
$$
and so that
$$
\mathbf{G}(\tau)(\Gamma_0)\setminus B_{2R}\subseteq\set{\mathbf{x}(p)+v_\tau(p)\mathbf{n}_{\mathcal{C}(\Gamma_0)}(p))\colon p\in\mathcal{C}(\Gamma_0)}\subseteq \mathbf{G}(\tau)(\Gamma_0).
$$
In this case, we also call $\Gamma_0$ and $\Gamma_1=\mathbf{G}(\tau)(\Gamma_0)$ are $(R,C)$-regular a.c.-isotopic. 

\begin{lem} \label{ACIsotopyLem}
Fix a regular cone $\mathcal{C}\subset\mathbb{R}^{n+1}$ that is a $C^2$ $\mathbf{n}_{\mathcal{C}}$-graph of size $1$ on scale $r_0$ at every $p\in \mathcal{L}=\mathcal{C}\cap\mathbb{S}^n$. Let $\Gamma_0$ and $\Gamma_1$ be two asymptotically conical hypersurfaces in $\mathbb{R}^{n+1}$ with $\mathcal{C}(\Gamma_0)=\mathcal{C}(\Gamma_1)=\mathcal{C}$ and that are $(R_0,C_0)$-regular a.c.-isotopic for some constants $R_0>1$ and $C_0>0$. Then for every $\delta\in (0,1)$ there is a radius $R_1=R_1(n, r_0,R_0,C_0,\delta)>1$ so that, for any $R>R_1$, $\Gamma_0\cap B_{4R}$ is $\delta$-isotopic to $\Gamma_1\cap B_{4R}$.
\end{lem}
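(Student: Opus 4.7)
The plan is to produce the $\delta$-isotopy by restricting the given $(R_0,C_0)$-regular a.c.-isotopy $\mathbf{G}$ to $\Gamma_0\cap B_{4R}$, after a harmless reparameterization, and to exploit the decay built into the definition of $(R_0,C_0)$-regularity to force $\mathbf{G}(\tau)$ to be $C^1$-close to the inclusion on the annulus $B_{4R}\setminus B_R$ once $R\gg R_0$. Write $\Gamma_j\setminus B_{2R_0}=\set{p+u_j(p)\mathbf{n}_{\mathcal{C}}(p)\colon p\in\mathcal{C}\setminus\bar B_{R_0}}$ for $j=0,1$, so in the notation of the lemma $v_0=u_0$ and $v_1=u_1$, and set $q_0(p)=p+u_0(p)\mathbf{n}_{\mathcal{C}}(p)$, $q_\tau(p)=p+v_\tau(p)\mathbf{n}_{\mathcal{C}}(p)$.

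First I would reparameterize. Both $q_0(p)\mapsto q_\tau(p)$ and $\mathbf{G}(\tau)|_{\Gamma_0\cap\set{|\mathbf{x}|\ge 3R_0}}$ parameterize the same piece of the graph $\mathbf{G}(\tau)(\Gamma_0)\setminus B_{2R_0}$, so using a smooth cutoff at radius $\sim R_0$ one constructs a continuous family of diffeomorphisms $\phi_\tau\colon\Gamma_0\to\Gamma_0$ with $\phi_0=\mathrm{id}$, equal to the identity on $\Gamma_0\cap B_{3R_0}$ for each $\tau$, and such that $\mathbf{G}(\tau)\circ\phi_\tau(q_0(p))=q_\tau(p)$ for $|p|\ge 4R_0$. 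After replacing $\mathbf{G}(\tau)$ by $\mathbf{G}(\tau)\circ\phi_\tau$, assume the outer explicit form holds. By continuity in $\mathcal{ACE}^\infty_n(\Gamma_0)$ and compactness of $[0,1]$, there exists $K=K(\mathbf{G})$ so that $\mathbf{G}(\tau)(\Gamma_0\cap\bar B_{4R_0})\subset B_K$ for every $\tau$.

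Then set $M=\Gamma_0\cap\bar B_{4R}$, $\mathbf{f}_0=\mathbf{x}|_M$, and $\mathbf{F}(\tau)=\mathbf{G}(\tau)|_M$; take $\mathbf{f}_1$ to be $\mathbf{F}(1)$ followed by a trivial $C^\infty$-small diffeomorphism supported in a thin neighborhood of $\partial B_{4R}$ that moves $\mathbf{F}(1)(M)$ onto $\Gamma_1\cap B_{4R}$ exactly (possible because both hypersurfaces are $O(R^{-1})$-close to $\mathcal{C}$ near $|\mathbf{x}|=4R$). Conditions (1)--(2) of the definition of $\delta$-isotopic are immediate. For (3), on the outer part $|\mathbf{F}(\tau)(q_0(p))|=|q_\tau(p)|\le|p|+C_0/|p|\le R+C_0/R<2R$, while on the inner portion the image lies in $B_K\subset B_{2R}$ once $R>K/2$. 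For (4), on the annulus $M\cap(B_{4R}\setminus B_R)$, the displacement is $\mathbf{F}(\tau)-\mathbf{x}=(v_\tau-u_0)\mathbf{n}_{\mathcal{C}}$, and the $(R_0,C_0)$-decay $|\nabla^i v_\tau|,|\nabla^i u_0|\le C_0|\mathbf{x}|^{-i-1}$ for $i=0,1$, combined with $|\nabla\mathbf{n}_{\mathcal{C}}|\le C(n,r_0)|\mathbf{x}|^{-1}$ (from the scale-$r_0$ graph structure of $\mathcal{C}$ rescaled out from the link), gives
\[
|\mathbf{F}(\tau)-\mathbf{x}|\le\frac{2C_0}{|p|},\qquad |\nabla_{\Gamma_0}(\mathbf{F}(\tau)-\mathbf{x})|\le\frac{C(n,r_0,C_0)}{|p|^2},
\]
so the scaled $C^1$-norm in the definition of $\delta$-isotopic is $O(R^{-2})$ and hence $\le\delta$ for $R>R_1(n,r_0,R_0,C_0,\delta)$.

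The main obstacle is the reparameterization step: arranging the family $\phi_\tau$ to be the identity on a fixed inner neighborhood, smoothly dependent on $\tau$, and with $\mathbf{G}\circ\phi_\cdot$ still a continuous path in $\mathcal{ACE}^\infty_n(\Gamma_0)$. The boundary adjustment used to define $\mathbf{f}_1$ is likewise routine but requires that $\Gamma_j\cap\partial B_{4R}$ be close to a section of $\mathcal{C}\cap\partial B_{4R}$, which again follows from the graph picture. Once these are in place, the verification of the $\delta$-isotopy conditions is mechanical from the asymptotic decay and the cone geometry.
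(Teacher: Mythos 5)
The central gap is in your verification of condition (3) in the definition of $\delta$-isotopy. The constant $K$ controlling $\mathbf{G}(\tau)(\Gamma_0\cap\bar B_{4R_0})$ is extracted from continuity of the particular path $\mathbf{G}$ in $\mathcal{ACE}^\infty_n(\Gamma_0)$ together with compactness of $[0,1]$; it therefore depends on $\mathbf{G}$, and your threshold $R>K/2$ yields an $R_1$ depending on the isotopy, not only on $(n,r_0,R_0,C_0,\delta)$ as the lemma requires. Nothing in $(R_0,C_0)$-regularity controls what $\mathbf{G}(\tau)$ does on the compact region $\Gamma_0\cap\bar B_{4R_0}$: the bound on $\|\mathbf{G}(\tau)\|_{C^2_1}$ implicit in continuity of the path can be arbitrarily large from one isotopy to the next. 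This uniformity of $R_1$ is exactly the point -- the lemma is applied later along a rescaling sequence where no single $\mathbf{G}$ persists -- so a $\mathbf{G}$-dependent $R_1$ does not suffice.

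A secondary issue is the reparameterization. You ask for a family $\phi_\tau$ of self-diffeomorphisms of $\Gamma_0$, continuous in $\tau$, equal to the identity on $\Gamma_0\cap B_{3R_0}$, with $\mathbf{G}(\tau)\circ\phi_\tau(q_0(p))=q_\tau(p)$ for $|p|\ge 4R_0$. But $(R_0,C_0)$-regularity constrains only the \emph{image} $\mathbf{G}(\tau)(\Gamma_0)$, not the map itself: the point $\mathbf{G}(\tau)(q_0(p))$ with $|p|\sim 4R_0$ may lie inside $B_{2R_0}$, where $q_\tau$ is not defined, or equal $q_\tau(p')$ for $p'$ far from $p$. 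A cutoff interpolation between the identity on $B_{3R_0}$ and $\mathbf{G}(\tau)^{-1}\circ q_\tau\circ q_0^{-1}$ outside $B_{4R_0}$ then has no reason to remain an embedding, and calling this step ``routine'' is not justified.

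The paper handles both problems with one device: rather than restrict or reparameterize $\mathbf{G}$, it conjugates $\mathbf{G}(\tau)$ with the flow $\Phi_\tau$ of the vector field $\phi\,|\mathbf{x}|^2|\mathbf{x}^T|^{-2}\mathbf{x}^T$ on $\Gamma_\tau$, a radial dilation outside a small ball, setting $\tilde{\mathbf{G}}(\tau)=\Phi_\tau(\log(\tilde R_1\tilde R_0^{-1}))\circ\mathbf{G}(\tau)\circ\Phi_0(\log(\tilde R_1^{-1}\tilde R_0))$. The pre-composition pushes the inner region out to the $\mathbf{G}$-dependent scale $\tilde R_0$, where the $(R_0,C_0)$-estimates make $\mathbf{G}(\tau)$ $C^1$-close to the inclusion; the post-composition pulls it back in. The explicit flow bounds yield $\tilde{\mathbf{G}}(\tau)(\Gamma_0\cap B_{2\tilde R_1})\subset \Gamma_\tau\cap B_{4\tilde R_1}$ with $\tilde R_1$ depending only on $(n,r_0,R_0,C_0,\delta)$; the $\mathbf{G}$-dependent $\tilde R_0$ is absorbed into the construction of $\tilde{\mathbf{G}}$ and never enters $R_1$. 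No reparameterization is needed, and the required uniformity is automatic.
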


\begin{proof}
Suppose $\mathbf{G}\colon[0,1]\to\mathcal{ACE}_n(\Gamma_0)$ is an $(R_0,C_0)$-regular a.c.-isotopy between $\Gamma_0$ and $\Gamma_1$. Fix a $\kappa\in (0,1)$ to be determined later in the proof. Let $\Pi_{\mathcal{C}}(p)$ be the nearest point projection (in $\partial B_{|p|}$) of $p$ onto $\mathcal{C}\cap \partial B_{|p|}$. Let $\Gamma_\tau=\mathbf{G}(\tau)(\Gamma_0)$. Then there is a radius $\tilde{R}_1=\tilde{R}_1(n,r_0,R_0,C_0,\delta,\kappa)>2R_0$ so that, for each $\tau\in [0,1]$, $\Pi_{\mathcal{C}}$ restricts to a diffeomorphism of $\Gamma_\tau\setminus B_{\tilde{R}_1}$ onto $\mathcal{C}\setminus B_{\tilde{R}_1}$ and its inverse $\mathbf{f}_\tau$ satisfies 
$$
\sup_{\mathcal{C}\setminus B_{\tilde{R}_1}} \sum_{i=0}^2 |\mathbf{x}|^{i-1} |\nabla^i \mathbf{f}_{\tau}-\nabla^i\mathbf{x}|_{\mathcal{C}}| \leq \kappa\delta.
$$
By the continuity of $\mathbf{G}$ there is a radius $\tilde{R}_0>2\tilde{R}_1$, which may depend on the isotopy $\mathbf{G}$, so that for each $\tau\in [0,1]$
\begin{itemize}
\item $\mathbf{G}(\tau)(\Gamma_0\cap B_{2\tilde{R}_0})\subset B_{4\tilde{R}_0}$;
\item $\sup_{\Gamma_0\setminus B_{\tilde{R}_0}} \sum_{i=0}^1 |\mathbf{x}|^{i-1} \left| \nabla^i \mathbf{G}(\tau)-\nabla^i \mathbf{x}|_{\Gamma_0} \right| \leq \kappa\delta$.
\end{itemize}

Let $\phi\colon\mathbb{R}^{n+1}\to [0,1]$ be a radial cutoff so that $\phi=1$ outside $B_{2\tilde{R}_1}$ and $\phi=0$ in $B_{\tilde{R}_1}$. Let $\mathbf{V}_{\tau}=\phi |\mathbf{x}|^2 |\mathbf{x}^T|^{-2} \mathbf{x}^T$ be a vector field on $\Gamma_\tau$ whose flow dilates on $\Gamma_\tau$. If $\set{\Phi_\tau(t)}_{t\in\mathbb{R}}$ is the family of diffeomorphisms of $\Gamma_\tau$ generated by $\mathbf{V}_\tau$, then
$$
\frac{\partial}{\partial t}|\Phi_\tau (t,p)|=\phi(\Phi_\tau (t,p))|\Phi_\tau (t,p)|.
$$ 
Thus, for $t\geq 0$,
$$
|\mathbf{x}(p)| \leq |\Phi_{\tau}(t,p)| \leq e^t |\mathbf{x}(p)|
$$
with equality in the second inequality for $p\in\Gamma_\tau\setminus B_{2\tilde{R}_1}$ so $\Phi_\tau(\Gamma_\tau \cap \partial B_R)=\Gamma_\tau \cap \partial B_{e^{t}R}$ and $\Phi_\tau(\Gamma_\tau\cap B_R)=\Gamma_\tau \cap B_{e^t R}$ for $R>2\tilde{R}_1$. Likewise, for $t<0$,
$$
e^t |\mathbf{x}(p)| \leq  |\Phi_{\tau}(t,p)| \leq |\mathbf{x}(p)|,
$$
with equality in the first equality as long as $p\in \Gamma_\tau\setminus B_{2\tilde{R}_1 e^{-t}}$.
Now define 
$$
\tilde{\mathbf{G}}(\tau)=\Phi_\tau (\log(\tilde{R}_1\tilde{R}_0^{-1})) \circ \mathbf{G}(\tau) \circ \Phi_0 (\log(\tilde{R}_1^{-1}\tilde{R}_0)).
$$
It is readily checked that, for an appropriate choice of $\kappa=\kappa(n)$ and corresponding $\tilde{R}_1$, as $\log(\tilde{R}_1^{-1}\tilde{R}_0)>0$ one has
\begin{itemize}
\item $\tilde{\mathbf{G}}(\tau)(\Gamma_0)=\Gamma_\tau$ for each $\tau\in [0,1]$;
\item $\tilde{\mathbf{G}}(\tau)(\Gamma_0\cap B_{2\tilde{R}_1})\subset \Gamma_\tau \cap B_{4\tilde{R}_1}$ for each $\tau\in [0,1]$;
\item $\sup_{\Gamma_0\setminus B_{2\tilde{R}_1}} \sum_{i=0}^1 |\mathbf{x}|^{i-1} |\nabla^i \tilde{\mathbf{G}}(\tau)-\nabla^i \mathbf{x}|_{\Gamma_0}| \leq \delta$ for each $\tau\in [0,1]$.
\end{itemize}
Hence the result follows with $R_1=2\tilde{R}_1$.
\end{proof}

\section{Basic tangent flow analysis} \label{BasicTangentFlowSec}
In this section we combine the forward monotonicity formula for flows coming out of a cone and trapped between two expanders from \cite{BWExpanderRelEnt} with the main theorem of \cite{BWUniqueExpander} to prove an initial structural result for model tangent flows of low entropy. Backwards in time these flows will be self-shrinkers and so the emphasis is on the forward in time behavior. 

We first need a lemma showing that expander mean convex solutions coming out of a cone can be rescaled to produce self-expanders.

\begin{lem} \label{ExpanderMCLem}
Fix $n\geq 3$ and $\Lambda\in (\lambda_n,\lambda_{n-1}]$ and assume that \eqref{Assump1} and \eqref{Assump2} hold. Let $\set{\Sigma_{t}}_{t>0}$ be a mean curvature flow of connected asymptotically conical hypersurfaces in $\Real^{n+1}$ so that
$$
\lim_{t\to 0^+}\mathcal{H}^n\lfloor \Sigma_t=\mathcal{H}^n \lfloor \cC
$$
where $\mathcal{C}$ is a regular cone in $\mathbb{R}^{n+1}$ and, for a consistent choice of unit normal, $\mathbf{n}_{\Sigma_t}$, of $\Sigma_t$, 
$$
E^{O,t}_{\Sigma_t}=2t H_{\Sigma_t}+{\mathbf{x}\cdot\mathbf{n}_{\Sigma_t}} <0.
$$
If $\lambda[\Sigma_t]<\Lambda$ for all $t$, then
$$
\lim_{t\to 0^+} t^{-1/2} \Sigma_t = \Gamma \mbox{ in $C^{\infty}_{loc}(\mathbb{R}^{n+1})$}
$$
where $\Gamma$ is an asymptotically conical self-expander with $\cC(\Gamma)=\cC$. In fact, $\Gamma$ is a.c.-isotopic to $\Sigma_1$.  Finally, if $U_t$ are components of $\Real^{n+1}\setminus \Sigma_t$ so $\mathbf{n}_{\Sigma_t}$ points out of $U_t$ and $U_\infty$ is the corresponding component of $\Real^{n+1}\setminus \Gamma$, then one has $U_\infty\subset t_1^{-1/2} U_{t_1}\subset t_{2}^{-1/2} U_{t_2}$ for $0<t_1<t_2$. 
\end{lem}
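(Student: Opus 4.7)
The plan is to study the family $\tilde{\Sigma}_t = t^{-1/2}\Sigma_t$, which under the time reparametrization $s=\log t$ satisfies the rescaled mean curvature flow $\partial_s \tilde{\mathbf{x}}^\perp = \mathbf{H}_{\tilde{\Sigma}_s} - \tilde{\mathbf{x}}^\perp/2$ whose stationary points are exactly the self-expanders. A direct computation gives
$$
E^{O,t}_{\Sigma_t} \,=\, \sqrt{t}\,\bigl(2H_{\tilde{\Sigma}_s} + \tilde{\mathbf{x}}\cdot\mathbf{n}\bigr),
$$
so the hypothesis $E^{O,t}_{\Sigma_t}<0$ is equivalent to the normal speed $\partial_s \tilde{\mathbf{x}}\cdot\mathbf{n} = -H_{\tilde{\Sigma}_s}-\tfrac{1}{2}\tilde{\mathbf{x}}\cdot\mathbf{n}$ being strictly positive. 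Since $\mathbf{n}$ points out of $U_t$, this makes the rescaled open region $\tilde{U}_t = t^{-1/2}U_t$ strictly increasing in $t$, yielding the inclusions $t_1^{-1/2}U_{t_1}\subset t_2^{-1/2}U_{t_2}$ for $0<t_1<t_2$. The third assertion will then follow once the limiting expander is identified, with $U_\infty=\bigcap_{t>0}\tilde{U}_t$.

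Next, to extract a limit as $t\to 0^+$ (equivalently $s\to -\infty$) I combine three ingredients: (i) entropy is dilation invariant and non-increasing under mean curvature flow, so $\lambda[\tilde{\Sigma}_s]=\lambda[\Sigma_{e^s}]<\Lambda$ uniformly; (ii) the forward monotonicity formula of \cite{BWExpanderRelEnt} supplies a quantity (a relative entropy of $\tilde{\Sigma}_s$ against the cone $\mathcal{C}$) that is monotone along the rescaled flow and is stationary precisely on self-expander flows; (iii) Proposition \ref{CompactnessProp}(3) together with interior regularity \cite{EHInvent} gives compactness in $C^\infty_{loc}$ of self-expanders with entropy strictly below $\Lambda$. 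Standard Brakke flow compactness combined with the monotonicity of the relative entropy then shows that along any sequence $s_i\to -\infty$ a subsequence of $\tilde{\Sigma}_{s_i}$ converges in $C^\infty_{loc}(\mathbb{R}^{n+1})$ to a smooth self-expander. The uniform graphical asymptotics over $\mathcal{C}$ provided by Proposition \ref{CondTangentFlowProp}(2) (and the preservation of the asymptotic cone under the $t^{-1/2}$ scaling) force the asymptotic cone of any such limit to be $\mathcal{C}$ itself.

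To upgrade from subsequential to full convergence I appeal to the main theorem of \cite{BWUniqueExpander} asserting uniqueness of the expander mean convex low-entropy self-expander with a given asymptotic cone: each subsequential limit is expander mean convex (as a $C^\infty_{loc}$ limit of such flows), asymptotic to $\mathcal{C}$, and has entropy $<\Lambda$, so all such limits coincide with a single $\Gamma$. This establishes $\lim_{t\to 0^+} t^{-1/2}\Sigma_t =\Gamma$ in $C^\infty_{loc}$, and the set-theoretic monotonicity from the first step closes up with $U_\infty$ as the designated component of $\mathbb{R}^{n+1}\setminus\Gamma$.

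Finally, for the a.c.-isotopy claim, the map $t\mapsto \tilde{\Sigma}_t$ on $(0,1]$ is smooth by parabolic regularity, extends continuously to $t=0$ with value $\Gamma$ by what has been established, and is uniformly graphical over $\mathcal{C}$ outside a fixed ball by Proposition \ref{CondTangentFlowProp}(2) together with pseudolocality at infinity. These uniform $C^k_1$-type bounds show that $\{\tilde{\Sigma}_t\}_{t\in[0,1]}$ defines a continuous path in $\mathcal{ACE}_n(\Gamma)$, hence an a.c.-isotopy between $\Gamma$ and $\tilde{\Sigma}_1=\Sigma_1$. The hard step is step three: verifying that any subsequential limit really does satisfy the hypotheses of the uniqueness result of \cite{BWUniqueExpander} — in particular, that the expander mean convexity, the entropy bound, and the identity $\mathcal{C}(\Gamma)=\mathcal{C}$ all pass to the limit in the relevant topology — while the other steps are comparatively routine once the monotonicity formula and the compactness results of Section \ref{PrelimSec} are in hand.
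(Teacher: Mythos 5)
Your overall strategy tracks the paper's: the paper's proof simply cites \cite[Proposition 5.1]{BWUniqueExpander} for the $t\to\infty$ case and notes what must change for $t\to 0^+$, and you have reconstructed the likely contents of that proposition (rescaled flow, sign of the normal speed, nested regions, compactness, identification of the limit, a.c.-isotopy). However, there are two genuine gaps.

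First, the way you pass from subsequential to full convergence is wrong. You appeal to ``the main theorem of \cite{BWUniqueExpander} asserting uniqueness of the expander mean convex low-entropy self-expander with a given asymptotic cone.'' That theorem establishes \emph{topological} uniqueness -- two such expanders are a.c.-isotopic -- not literal uniqueness as sets; indeed the paper itself uses it exactly in that weaker sense in the proof of Proposition \ref{BasicRegProp} (``$\Gamma'$ is a.c.-isotopic to $\Gamma^-$''). The correct route to full convergence is the one you already set up in the first paragraph and then abandoned: since $E^{O,t}_{\Sigma_t}<0$ makes the rescaled regions $\tilde U_t=t^{-1/2}U_t$ strictly nested, $U_\infty=\bigcap_{t>0}\tilde U_t$ is a well-defined set, and once regularity shows any subsequential limit is a smooth hypersurface, that limit must be $\partial U_\infty$ independently of the subsequence. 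No uniqueness theorem for expanders is needed or available.

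Second, you gloss over exactly the point the paper singles out as the nontrivial modification. In \cite{BWUniqueExpander} the $t\to\infty$ limit is stable, and stability drives the $C^\infty_{loc}$ regularity of both the limit and the convergence. For $t\to 0^+$ the limit need not be stable, so one must instead derive regularity from the entropy bound $\lambda<\Lambda$ together with hypotheses \eqref{Assump1} and \eqref{Assump2} (which rule out low-entropy minimal cones and lower-dimensional shrinkers appearing as blowups) and then Brakke's local regularity theorem \cite{Brakke, WhiteReg}. Your appeal to ``Brakke flow compactness combined with the monotonicity of the relative entropy'' yields at most a Brakke flow limit that is backward/forward self-similar; it does not by itself produce a smooth limit or smooth convergence. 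Proposition \ref{CompactnessProp}(3) gives compactness of the \emph{space of self-expanders}, which you cannot apply until you already know the limit is a smooth self-expander -- exactly the step at issue.
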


\begin{proof}
This is essentially proved in \cite[Proposition 5.1]{BWUniqueExpander} for $t\to \infty$.  The only difference when $t\to 0$ is that the limit will, generally, not be stable. This is only relevant for the regularity of the limit and of the convergence. Instead, one may appeal to the entropy bound and \eqref{Assump1} and \eqref{Assump2} and Brakke's regularity theorem \cite{Brakke} -- see also \cite{WhiteReg} -- to obtain the desired regularity for the limit and for the convergence.
\end{proof}

\begin{prop} \label{BasicRegProp}
Fix $n\geq 3$, $\Lambda\in (\lambda_n,\lambda_{n-1}]$ and $\epsilon_0\in (0,\Lambda)$. Assume that \eqref{Assump1} and \eqref{Assump2} hold. Let $\mathcal{T}=\set{\nu_t}_{t\in \Real}$ be a matching motion in $\Real^{n+1}$ such that $\nu_{-1}=\mathcal{H}^n \lfloor \Sigma$ for $\Sigma\in\mathcal{ACS}_n[\Lambda-\epsilon_0]$. Then there is a $\rho_+=\rho_+(\mathcal{T})\in (0,1)$ so that
\begin{enumerate}
\item $\mathcal{T}\lfloor \Real^{n+1}\times (0, \rho_+^2)$ is a smooth flow; 
\item For all $t\in (0, \rho_+^2)$, the asymptotically conical hypersurfaces $\Gamma_t=\spt(\nu_t)$ are $(R_0,C_0)$-regular a.c.-isotopic to $\Sigma$, where $R_0=R_0(n,\Lambda,\epsilon_0)$ and $C_0=C_0(n,\Lambda,\epsilon_0)$ are given by Proposition \ref{CondTangentFlowProp}.
\end{enumerate}
\end{prop}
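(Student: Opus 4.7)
The plan is to combine the forward monotonicity formula of \cite{BWExpanderRelEnt} with the uniqueness theorem for low-entropy asymptotically conical self-expanders from \cite{BWUniqueExpander} to pin down the forward-in-time behavior of $\mathcal{T}$ emanating from the singular time $t=0$. Since $\nu_{-1}=\mathcal{H}^n\lfloor\Sigma$ and $\Sigma$ is an asymptotically conical self-shrinker, the backward self-similar structure gives $\Gamma_{t'}=\sqrt{-t'}\,\Sigma$ for $t'\in(-1,0)$ and, away from the origin, $\nu_{t'}\to\mathcal{H}^n\lfloor\mathcal{C}$ as $t'\to 0^-$, where $\mathcal{C}:=\mathcal{C}(\Sigma)$. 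All of the work therefore concerns $t>0$.

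The first step is to perform the expander-normalized blowup. Consider the family $\{t^{-1/2}\,\Gamma_t\}_{0<t<1}$. For any sequence $t_i\to 0^+$, the uniform entropy bound $\lambda\leq\Lambda-\epsilon_0<2$, the uniform asymptotic conical description from Proposition~\ref{CondTangentFlowProp}(2), the compactness assertion of Proposition~\ref{CompactnessProp}(3), and Brakke's regularity theorem give smooth subsequential convergence $t_i^{-1/2}\,\Gamma_{t_i}\to\Gamma^\ast$ in $C^\infty_{\mathrm{loc}}$. The forward monotonicity of relative entropy from \cite{BWExpanderRelEnt} then forces $\Gamma^\ast$ to be an asymptotically conical self-expander in $\mathcal{E}_n[\Lambda-\epsilon_0]$ with $\mathcal{C}(\Gamma^\ast)=\mathcal{C}$; equivalently, a second parabolic blowup of $\mathcal{T}$ at $(\mathbf{0},0)$ is the self-expanding matching motion $\{\mathcal{H}^n\lfloor\sqrt{t}\,\Gamma^\ast\}_{t>0}$ emanating from $\mathcal{C}$.

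The key step is then to invoke the main theorem of \cite{BWUniqueExpander}, which asserts that such a low-entropy self-expander with prescribed cone $\mathcal{C}$ is uniquely determined. Every subsequential limit is therefore the same, upgrading subsequential convergence to full smooth convergence $t^{-1/2}\,\Gamma_t\to\Gamma^\ast$ in $C^\infty_{\mathrm{loc}}(\mathbb{R}^{n+1})$ as $t\to 0^+$. Combining this with the uniform graphical description of $\spt(\nu_t)$ outside $B_{2R_0}$ from Proposition~\ref{CondTangentFlowProp}(2) (controlling behavior near infinity uniformly in $t$) and the interior smoothness provided by Brakke/\cite{WhiteReg} regularity (controlling behavior on compact sets), one extracts a $\rho_+=\rho_+(\mathcal{T})\in(0,1)$ such that for every $t\in(0,\rho_+^2)$, $\Gamma_t=\spt(\nu_t)$ is a smooth asymptotically conical hypersurface with $\mathcal{C}(\Gamma_t)=\mathcal{C}$, and $\mathcal{T}\lfloor\mathbb{R}^{n+1}\times(0,\rho_+^2)$ is a classical smooth mean curvature flow, establishing (1).

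For (2), the uniform estimates of Proposition~\ref{CondTangentFlowProp}(2) give identical graphical descriptions of both $\Sigma$ and each $\Gamma_t$ outside $B_{R_0}$ as normal graphs of size $\leq C_0$ over $\mathcal{C}$. The smooth self-similar family $\{\sqrt{-t'}\,\Sigma\}_{t'\in(-1,0)}$ yields an $(R_0,C_0)$-regular a.c.-isotopy on the shrinker side and the smooth forward family $\{\Gamma_t\}_{t\in(0,\rho_+^2)}$ does the same on the expander side; after rescaling by $\sqrt{-t'}$ and $\sqrt{t}$ respectively, both approach the single canonical resolution $\Gamma^\ast$ of $\mathcal{C}$ produced above. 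Interpolating the defining graphs over $\mathcal{C}\setminus B_{R_0}$ and extending into the interior along the unique topological type represented by $\Gamma^\ast$ produces the desired $(R_0,C_0)$-regular a.c.-isotopy from $\Sigma$ to $\Gamma_t$. The main obstacle is precisely this bridging across the singular time $t=0$: the flow itself is singular there and cannot directly serve as the isotopy, and it is only the uniqueness of $\Gamma^\ast$ from \cite{BWUniqueExpander} that renders the interpolation well-defined and canonical and thereby transports the $(R_0,C_0)$ controls across the singularity.
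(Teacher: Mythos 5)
Your proposal has the right coarse ingredients (forward monotonicity from \cite{BWExpanderRelEnt}, the main theorem of \cite{BWUniqueExpander}, Brakke/White regularity, and the uniform graphical description at infinity from Proposition~\ref{CondTangentFlowProp}), but there are two serious gaps that together make the argument fail as written.

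First, the forward monotonicity formula of \cite{BWExpanderRelEnt} (``\emph{Relative expander entropy in the presence of a two-sided obstacle}'') cannot be invoked bare: it requires the flow emerging from the cone to be trapped between two self-expanders. Producing that two-sided obstacle is the technical heart of the paper's proof. The paper perturbs $\Sigma$ by $\pm\epsilon f$, where $f>0$ is the lowest eigenfunction of the shrinker stability operator, to obtain a foliation $\Sigma^\epsilon$ of shrinker mean convex hypersurfaces with $\lambda[\Sigma^\epsilon]<\lambda[\Sigma]$, flows these to get expander mean convex flows for $t>0$ that remain disjoint by the maximum principle and trap $\mathcal{T}$, and then rescales them (using Lemma~\ref{ExpanderMCLem}) to produce the barrier expanders $\Gamma^\pm$. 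Only then does the forward monotonicity formula apply to a tangent flow of $\mathcal{T}$ at $(\mathbf{0},0)$. Your proposal skips this construction entirely, so the step ``the forward monotonicity of relative entropy \dots forces $\Gamma^\ast$ to be a self-expander'' is unjustified.

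Second, you misstate the main theorem of \cite{BWUniqueExpander}: it is a \emph{topological} uniqueness theorem, asserting that low-entropy asymptotically conical self-expanders with the same cone lie in the same a.c.-isotopy class, not that there is a unique such expander. Consequently the claimed upgrade from subsequential to full smooth convergence $t^{-1/2}\Gamma_t\to\Gamma^\ast$ does not follow, and the paper neither proves nor needs it. The paper instead establishes (1) and (2) by contradiction: if a conclusion failed along some sequence $t_i\to 0^+$ (or at points $(\mathbf{x}_i,t_i)$), rescale about $(\mathbf{0},0)$, extract a tangent flow whose forward part is known to be a smooth self-expanding flow, and use Brakke/White regularity (for (1)) or Lemma~\ref{IsotopyGluingLem} together with pseudo-locality \cite{INS} (for (2)) to derive a contradiction. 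Your final paragraph's ``interpolating \dots and extending into the interior along the unique topological type'' is hand-waving at precisely this patching step, which is nontrivial and is carried out via Lemma~\ref{IsotopyGluingLem} in the paper.
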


\begin{proof}
As $\Sigma$ is a self-shrinker, it is connected by the Fraenkel property of self-shrinkers. Let $f\in C^{\infty}(\Sigma)$ be the unique positive function that satisfies
$$
\int_{\Sigma} |f|^2 e^{-\frac{|\mathbf{x}|^2}{4}} d\mathcal{H}^n =1
$$
and
$$
-L_{\Sigma} f=-\left(\Delta_\Sigma -\frac{\mathbf{x}}{2} +|A_{\Sigma}|^2 +\frac{1}{2}\right) f= \mu_0 f
$$ 
where $\mu_0<-1$ is the lowest eigenvalue of the shrinker stability operator -- see \cite[Proposition 4.1]{BWDuke} for the existence of such a function. As observed in \cite[Proposition 4.1]{BWDuke} this $f$ has sublinear growth. Hence, as $\Sigma$ is asymptotically conical, there is an $\tilde{\epsilon}>0$ so that for all $\epsilon \in (-\tilde{\epsilon}, \tilde{\epsilon})$ one has
$$
\Sigma^{\epsilon}=\set{\mathbf{x}(p)+\epsilon f(p) \mathbf{n}_{\Sigma}(p): p\in \Sigma}
$$
are embedded asymptotically conical hypersurfaces with $\cC(\Sigma^\epsilon)=\cC(\Sigma)$. Up to shrinking $\epsilon$, one can ensure that $\lambda[\Sigma^\epsilon]<\lambda[\Sigma]\leq \Lambda-\epsilon_0$ when $\epsilon\neq 0$ and each $\Sigma_\epsilon$ has shrinker mean curvature, $\mathbf{H}_{\Sigma^\epsilon}+\frac{\mathbf{x}^\perp}{2}$ that points away from $\Sigma=\Sigma^0$ for $\epsilon\neq 0$. These facts are all proved in \cite[Proposition 4.2]{BWDuke} -- though the direction of the shrinker mean curvature is not explicitly stated it is easily determined from the argument. These hypersurfaces also form a foliation around $\Sigma=\Sigma^0$. 
		
Use the choice of unit normal on $\Sigma$ to define $\Omega_-(\Sigma)$  as the component of $\Real^{n+1}\backslash \Sigma$ for which $\mathbf{n}_\Sigma$ points outward -- as $\Sigma$ is connected there are only two components. This definition extends in an obvious and compatible way to the $\Sigma^\epsilon$.  Using this, Let $U^\epsilon=\Omega_-(\Sigma^\epsilon)$.  Clearly, for $\epsilon_1<\epsilon_2$, one has $U^{\epsilon_1}\subseteq U^{\epsilon_2}$. Observe that for $\epsilon>0$ the shrinker mean curvature of $\Sigma^\epsilon$ points out of $U^\epsilon$, while for $\epsilon<0$ it points into the region.
		
For $\epsilon\neq 0$, let $\mathcal{T}^\epsilon=\set{\Sigma^\epsilon_t}_{t\in [-1,\infty)}$ be the mean curvature flow with initial data determined by $\Sigma^\epsilon$.  As proved in \cite[Proposition 4.5]{BWDuke} these flows are smooth for all time and remain asymptotic to $\cC(\Sigma)$. Moreover, the maximum principle ensures that if $\epsilon_1\neq \epsilon_2$, then $\Sigma_t^{\epsilon_1}$ is disjoint from $\Sigma_t^{\epsilon_2}$ for all $t\in [-1,\infty)$. In addition, each $\Sigma_t^\epsilon$ is a.c.-isotopic to $\Sigma$. Choose the unit normal $\mathbf{n}_{\Sigma_t^\epsilon}$ on $\Sigma_t^\epsilon$ that is compatible with the one on $\Sigma^\epsilon$. Using this normal, let $U^\epsilon_t$ be the component of $\mathbb{R}^{n+1}\setminus\Sigma^\epsilon_t$ so $\mathbf{n}_{\Sigma_t^\epsilon}$ points out of it. Clearly, as the flows of distinct values of $\epsilon$ remain disjoint, one has that, for $\epsilon_1<\epsilon_2$, $U^{\epsilon_1}_t\subseteq U^{\epsilon_2}_t$ for all $t\geq -1$.  Moreover, one has that $\spt(\nu_t)$, the support of the initial flow $\mathcal{T}$, satisfies $\spt(\nu_t)\subseteq \overline{U_t^{\epsilon_+}}\setminus U_{t}^{\epsilon_-}$ for any $-\tilde{\epsilon}<\epsilon_-<0<\epsilon_+<\tilde{\epsilon}$. 
	
By the compactness of Brakke flows \cite{Brakke} (see also \cite{IlmanenElliptic} and \cite{W}), one can consider $\mathcal{T}^+=\lim_{\epsilon \downarrow 0} \mathcal{T}^\epsilon$ and $\mathcal{T}^-=\lim_{\epsilon \uparrow 0} \mathcal{T}^\epsilon$. Write $\mathcal{T}^\pm=\set{\nu^\pm_t}_{t\in [-1,\infty)}$. These limits exist as the flows are topologically ordered. Indeed, let $U^+_t=\bigcap_{\epsilon\in (0,\tilde{\epsilon})} U^\epsilon_t$ and $U^-_t=\bigcup_{\epsilon\in (-\tilde{\epsilon},0)} U^\epsilon_t$ -- one readily checks these are sets of locally finite perimeter. By the entropy bound one has $\nu^\pm_t=\mathcal{H}^n \lfloor \partial^* U^\pm_t$.  Moreover, by the uniqueness for smooth mean curvature flows of bounded curvature, one has that $\mathcal{T}^\pm=\mathcal{T}$ in $\Real^{n+1}\times [-1,0)$ and, hence, in $\Real^{n+1}\times [-1,0]$.  However, in general, $\mathcal{T}^+$, $\mathcal{T}^-$ and $\mathcal{T}$ can be disjoint in $\Real^{n+1}\times (0,\infty)$.  Nevertheless, they are ordered in the expected way. Indeed, $U^-_t\subseteq U^+_t$ and $\spt(\nu_t)\subseteq \Omega_t=\overline{U^+_t}\setminus U^-_t$.
	
As each $\Sigma^\epsilon$ is shrinker mean convex for $\epsilon\neq 0$, it follows from the parabolic maximum principle (e.g., \cite[Proposition 4.4]{BWDuke}) that $\Sigma_t^\epsilon$ is also shrinker mean convex for $t\in [-1,0)$ and is expander mean convex for $t\in (0,\infty)$. Note that for $t\in (-1,0)$ the shrinker mean curvature points away from $\Omega_t=\sqrt{-t}\, \Sigma$, while, for $t>0$, the expander mean curvature points toward $\Omega_t$. The strict maximum principle and \eqref{Assump1} and \eqref{Assump2} and standard blowup arguments implies that for $t>0$ either $\spt(\nu_t^+)=\sqrt{t}\, \Gamma^+$ for $\Gamma^+$ a smooth self-expander or $\spt(\nu_t^+)=\Sigma_t^+$ where $\set{\Sigma_t^+}_{t>0}$ is a smooth mean curvature flow that is strictly expander mean convex and satisfies the other hypotheses of Lemma \ref{ExpanderMCLem}. If the later case occurs, then Lemma \ref{ExpanderMCLem} implies $\lim_{t\to 0} t^{-1/2} \Sigma_t^+=\Gamma^+$ for $\Gamma^+$ a smooth self-expander. A similar argument produces a self-expander $\Gamma^-$ corresponding to $\mathcal{T}^-$. In either case, for $t>0$,  $\Omega_t$ lies between $\sqrt{t}\, \Gamma^-$ and $\sqrt{t}\,\Gamma^+$.  As $\Gamma^\pm$ are smooth and are either limits of $\Sigma^\epsilon_{1}$ or are a.c.-isotopic to these limits, they are both a.c.-isotopic to $\Sigma$.
	
Now consider any tangent flow $\mathcal{T}^\prime$ to $\mathcal{T}$ at $(\mathbf{0},0)$. As this flow is trapped between $\sqrt{t}\, \Gamma^-$ and $\sqrt{t}\, \Gamma^+$ for $t>0$, one can appeal to the forward monotonicity formula \cite[Theorem 6.1]{BWExpanderRelEnt} and \eqref{Assump1} to see that, for $t>0$, $\mathcal{T}^\prime=\set{\sqrt{t}\, \Gamma^\prime}_{t>0}$ for $\Gamma^\prime$ a smooth self-expander trapped between $\Gamma^-$ and $\Gamma^+$. By the main result of \cite{BWUniqueExpander}, $\Gamma^\prime$ is a.c.-isotopic to $\Gamma^-$ and hence to $\Sigma$.  
	
One proves the existence of $\rho_+$ by contradiction. To see the first claim, suppose there was no such $\rho_+$, then there would be a sequence of singular points of $\mathcal{T}$, $(\mathbf{x}_i,t_i)\in\mathbb{R}^{n+1}\times (0,\infty)$ with $t_i\to 0$. By Brakke's regularity theorem (e.g., \cite{Brakke} and \cite{WhiteReg}) $(\mathbf{x}_i,t_i)\to (\mathbf{0},0)$. Let $r_i=(|\mathbf{x}_i|^2+t_i)^{1/2}$ so $r_i\to 0$ and $(\mathbf{x}_i^\prime,t_i^\prime)=(r_i^{-1}\mathbf{x}_i,r_i^{-2} t_i)$. By what we have already shown, the rescaled flows $\mathcal{T}^{(\mathbf{0},0),r_i}$, up to passing to a subsequence, converge to a tangent flow $\mathcal{T}^\prime$ which is $\sqrt{-t}\, \Sigma$ for $t<0$ and $\sqrt{t}\, \Gamma^\prime$ for $t>0$ for $\Gamma^\prime$ a smooth self-expander. As $|\mathbf{x}_i^\prime|^2+t_i^\prime=1$, up to passing to a further subsequence, $(\mathbf{x}_i^\prime,t_i^\prime)\to (\mathbf{x}^\prime_0,t^\prime_0)$ and, by the upper semicontinuity of Gaussian density, $(\mathbf{x}^\prime_0,t^\prime_0)$ lies on the support of $\mathcal{T}$. As $|\mathbf{x}^\prime_0|^2+t^\prime_0=1$ and $t^\prime_0\geq 0$, $(\mathbf{x}^\prime_0,t^\prime_0)$ is a regular point of $\mathcal{T}$ and so, invoking Brakke's regularity again, for all large $i$ the $(\mathbf{x}^\prime_i,t_i^\prime)$ are regular points of $\mathcal{T}^{(\mathbf{0},0), r_i}$. That is, the $(\mathbf{x}_i,t_i)$ are regular points of $\mathcal{T}$. This contradiction proves the first claim.

To see the second claim, again suppose there was no such $\rho_+$, then there would be a sequence $t_i>0$ with $t_i\to 0$ so that the $\spt(\nu_{t_i})$ are not $(R_0,C_0)$-regular a.c.-isotopic to $\Sigma$. Consider the rescaled flows $\mathcal{T}^{(\mathbf{0},0), \sqrt{t_i}}=\set{\nu_{t}^i}_{t\in\mathbb{R}}$ so, up to passing to a subsequence, they converge to a tangent flow $\mathcal{T}^\prime=\set{\nu_t^\prime}_{t\in\mathbb{R}}$. As remarked before, $\spt(\nu_t^\prime)=\sqrt{-t}\, \Sigma$ for $t<0$ and $\spt(\nu_t^\prime)=\sqrt{t}\, \Gamma^\prime$ for $\Gamma^\prime$ a self-expander that is a.c.-isotopic to $\Sigma$. Let $\delta_0=\delta_0(\Sigma)\in (0,1)$ be the number given by Lemma \ref{IsotopyGluingLem}. There is a radius $\tilde{R}>1$ so that, for any $R>\tilde{R}$, $\Gamma^\prime\cap B_{4R}$ is $\frac{\delta_0}{2}$-isotopic to $\Sigma\cap B_{4R}$. As $\spt(\nu_1^i)\to \Gamma^\prime$ in $C^\infty_{loc}(\mathbb{R}^{n+1})$, for all large $i$ the $\spt(\nu_1^i)\cap B_{8R}$ are $\delta_0$-isotopic to $\Sigma\cap B_{8R}$. That is, $\spt(\nu_{t_i})\cap B_{8R\sqrt{t_i}}$ is $\delta_0$-isotopic to $(\sqrt{t_i}\, \Sigma)\cap B_{8R\sqrt{t_i}}$. Thus, by Proposition \ref{CondTangentFlowProp} and the pseudo-locality \cite[Theorem 1.5]{INS}, one can appeal to Lemma \ref{IsotopyGluingLem} to patch this isotopy and the flow $\mathcal{T}$ restricted to $(\mathbb{R}^{n+1}\setminus B_{2R\sqrt{t_i}})\times [-t_i,t_i]$. This shows that $\spt(\nu_{t_i})$ is $(R_0,C_0)$-regular a.c.-isotopic to $\sqrt{t_i}\, \Sigma$. As, via the flow $\mathcal{T}$, $\sqrt{t_i} \, \Sigma$ is $(R_0,C_0)$-regular a.c.-isotopic to $\Sigma$, so is $\spt(\nu_{t_i})$. This is a contradiction and completes the proof.
\end{proof}

\section{Almost isotopies} \label{AlmostIsotopySec}
In this section we show that if all the tangent flows of a mean curvature flow of low entropy are ``almost isotopies" in a certain sense, then the flow itself is an almost isotopy.

First of all given a matching motion $\mathcal{K}$ let $\sing(\mathcal{K})\subseteq\spt(\mathcal{K})$ be the set of singular points of $\mathcal{K}$ and $\mathrm{reg}(\mathcal{K})=\spt(\mathcal{K})\setminus\sing(\mathcal{K})$ be the set of regular points. We then let
$$
\mathrm{ST}(\mathcal{K})=\set{t_0\in \Real\colon \mbox{$(\mathbf{x}_0, t_0)\in \sing(\mathcal{K})$ for some $\mathbf{x}_0\in\mathbb{R}^{n+1}$}}
$$
be the set of singular times.
\begin{defn}
Let $\mathcal{K}=\set{\mu_t}_{t\in [-1,1)}$ be a matching motion with $\spt(\mu_t)$ compact. We call $\mathcal{K}$ an \emph{almost isotopy} if 
\begin{enumerate}
\item $\mathrm{ST}(\mathcal{K})\subseteq (-1,1)$ has $\mathcal{L}^1$ measure zero;
\item For every $t\notin\mathrm{ST}(\mathcal{K})$ either $\spt(\mu_t)=\emptyset$ or $\spt(\mu_t)$ is isotopic to $\spt(\mu_{-1})$.
\end{enumerate}
\end{defn}

\begin{defn}
Let $\mathcal{K}=\set{\mu_t}_{t\in \mathbb{R}}$ be a matching motion such that $\mu_{-1}=\mathcal{H}^n\lfloor \Sigma$ for $\Sigma$ an asymptotically conical self-shrinker in $\mathbb{R}^{n+1}$. We call $\mathcal{K}$  an \emph{almost a.c.-isotopy} if 
\begin{enumerate}
\item $\mathrm{ST}(\mathcal{K})\cap [0,1)$ has $\mathcal{L}^1$ measure zero;
\item For every $t\in [0,1)\setminus \mathrm{ST}(\mathcal{K})$, $\spt(\mu_t)$ is a.c.-isotopic to $\Sigma$.
\end{enumerate}
For fixed $R>1$ and $C>0$, an almost a.c.-isotopy $\mathcal{K}$ is called \emph{$(R,C)$-regular} if $\spt(\mu_t)$ is $(R,C)$-regular a.c.-isotopic to $\Sigma$ for every $t\notin\mathrm{ST}(\mathcal{K})$.
\end{defn}

Define the distance $d$ on space-time $\mathbb{R}^{n+1}\times\mathbb{R}$ to be 
$$
d((\mathbf{x},t),(\mathbf{y},s))=\sqrt{|\mathbf{x}-\mathbf{y}|^2+|t-s|}.
$$
Denote by $B^d_R((\mathbf{x}_0,t_0))$ the (open) ball in the metric $d$ centered at $(\mathbf{x}_0,t_0)$ with radius $R$. Given a matching motion $\mathcal{K}=\set{\mu_t}$ and a point $X_0=(\mathbf{x}_0,t_0)\in\reg(\mathcal{K})$, let 
$$
R^{\mathcal{K}}_{reg}(X_0)=\sup\set{r>0\colon\mbox{$\Sigma_{t_0}=\spt(\mu_{t_0})$ is a $C^2$ $\mathbf{n}_{\Sigma_{t_0}}$-graph of size $1$ on scale $r$ at $\mathbf{x}_0$}}
$$
be the regularity radius of $\mathcal{K}$ at $X_0$. We omit the superscript, $\mathcal{K}$, when it is clear from the context.

\begin{lem} \label{RegRadiusLem}
Let $\mathcal{K}=\set{\mu_t}_{t\in [-1,1)}$ be a matching motion in $\mathbb{R}^{n+1}$ and $X_0=(\mathbf{x}_0,t_0)\in\spt(\mathcal{K})$. Suppose every $\mathcal{T}=\set{\nu_t}_{t\in\mathbb{R}}\in\mathrm{Tan}_{X_0}\mathcal{K}$ satisfies $\nu_{-1}=\mathcal{H}^n\lfloor\Sigma$ for $\Sigma$ an asymptotically conical self-shrinker \footnote{By recent work of Chodosh-Schulze \cite{CS}, the asymptotically conical multiplicity-one tangent flow is unique.}. Then there exists $\rho_0=\rho_0(\mathcal{K},X_0)>0$ so that 
\begin{enumerate}
\item $\mathcal{K}\lfloor (B^d_{\rho_0}(X_0)\cap\set{t<t_0})$ is a smooth flow;
\item $\kappa_0=\inf\set{d(X,X_0)^{-1} R_{reg}(X)\colon X\in \reg(\mathcal{K})\cap B^d_{\rho_0}(X_0)\cap\set{t<t_0}}>0$.
\end{enumerate}
\end{lem}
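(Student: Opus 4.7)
I would prove both conclusions by a single blowup compactness argument. The key structural input is that, under the hypothesis, every $\mathcal{T}=\{\nu_t\}\in\mathrm{Tan}_{X_0}\mathcal{K}$ satisfies $\spt(\nu_t)=\sqrt{-t}\,\Sigma$ (smooth) for $t<0$ and $\spt(\nu_0)=\mathcal{C}(\Sigma)$, which is smooth away from $\mathbf{0}$. Hence every point of $\spt(\mathcal{T})\cap\{t\leq 0\}\setminus\{(\mathbf{0},0)\}$ has Gaussian density one, which is precisely the trigger needed for Brakke--White $\epsilon$-regularity.

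For \emph{claim (1)}, suppose no such $\rho_0$ exists and extract singular points $X_i=(\mathbf{x}_i,t_i)\in\sing(\mathcal{K})$ with $t_i<t_0$ and $r_i:=d(X_i,X_0)\to 0$. By the entropy bound and the compactness theory for matching motions (Ilmanen, S.~Wang), the rescaled flows $\mathcal{K}^{X_0,r_i}$ subsequentially converge to some $\mathcal{T}\in\mathrm{Tan}_{X_0}\mathcal{K}$. The rescaled points $X_i'=(r_i^{-1}(\mathbf{x}_i-\mathbf{x}_0),r_i^{-2}(t_i-t_0))$ lie on the $d$-unit sphere about $(\mathbf{0},0)$ with $t_i'<0$; along a subsequence $X_i'\to X_\infty'$ with $d(X_\infty',(\mathbf{0},0))=1$ and $t_\infty'\in[-1,0]$. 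Whether $t_\infty'<0$ (so $\mathbf{x}_\infty'\in\sqrt{-t_\infty'}\,\Sigma$) or $t_\infty'=0$ (so $\mathbf{x}_\infty'\in\mathcal{C}(\Sigma)\setminus\{\mathbf{0}\}$), the point $X_\infty'$ is a unit-density point of $\mathcal{T}$. Brakke--White regularity then gives smoothness of $\mathcal{T}$ on a backward parabolic neighborhood of $X_\infty'$, and smooth convergence $\mathcal{K}^{X_0,r_i}\to\mathcal{T}$ places $X_i'$ in $\reg(\mathcal{K}^{X_0,r_i})$ for large $i$, contradicting $X_i\in\sing(\mathcal{K})$.

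For \emph{claim (2)}, take the $\rho_0$ from (1) and suppose the claimed infimum vanishes; pick $X_i\in\reg(\mathcal{K})\cap B^d_{\rho_0}(X_0)\cap\{t<t_0\}$ with $r_i:=d(X_i,X_0)$ and $r_i^{-1}R_{reg}^{\mathcal{K}}(X_i)\to 0$. After possibly shrinking $\rho_0$, I argue $r_i\to 0$: if $r_i\geq c>0$ along a subsequence, then $R_{reg}^{\mathcal{K}}(X_i)\to 0$, and by (1) the $X_i$ must exit every compact subset of the smooth open region $B^d_{\rho_0}(X_0)\cap\{t<t_0\}$, forcing $t_i\to t_0$ and a subsequential limit $(\mathbf{x}_\infty,t_0)$ with $|\mathbf{x}_\infty-\mathbf{x}_0|\geq c$. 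Applying the blowup reasoning above to the rescalings $\mathcal{K}^{X_0,\rho}$ as $\rho\to 0$ shows that the Huisken density of $\mathcal{K}$ at points $(\mathbf{y},t_0)$ tends to $1$ as $\mathbf{y}\to\mathbf{x}_0$ with $\mathbf{y}\neq\mathbf{x}_0$, so upon shrinking $\rho_0$ further Brakke--White makes every such point of $\spt(\mathcal{K})\cap\{t=t_0\}$ regular, contradicting $R_{reg}^{\mathcal{K}}(X_i)\to 0$. Thus $r_i\to 0$, and the rescaling argument from (1) produces a unit-density regular point $X_\infty'$ of the tangent flow; smooth convergence $\mathcal{K}^{X_0,r_i}\to\mathcal{T}$ gives a positive lower bound on $R_{reg}^{\mathcal{K}^{X_0,r_i}}(X_i')=r_i^{-1}R_{reg}^{\mathcal{K}}(X_i)$, again a contradiction.

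\emph{Main obstacle.} The delicate step is the non-blowup sub-case of (2), requiring upper semicontinuity of Huisken density together with the hypothesis on $\mathrm{Tan}_{X_0}\mathcal{K}$ to control the density of $\mathcal{K}$ in a full parabolic neighborhood of $X_0$ (not merely at $X_0$ itself). The rest is standard compactness for low-entropy matching motions combined with the scaling-equivariance of $R_{reg}$.
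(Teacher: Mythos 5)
Your core idea --- blow up at $X_0$ by the rescaling $\mathcal{K}^{X_0,r_i}$, extract a tangent flow by the low-entropy compactness for matching motions, use upper semicontinuity of Gaussian density to locate the limit point on $\spt(\mathcal{T})$, observe that the tangent flow is smooth on $(\Real^{n+1}\setminus\{\mathbf{0}\})\times(-\infty,0]$ so the limit point is regular, and transfer regularity back to $\mathcal{K}^{X_0,r_i}$ via Brakke--White --- is exactly the argument the paper uses. The proof of claim (1) is essentially identical to the paper's.

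Where you diverge is in the logical structure: the paper negates the \emph{joint} statement ``$\exists\rho_0>0$ such that (1) and (2) both hold'' simultaneously. This matters because the negation, applied with $\rho_0=1/i$, automatically produces a sequence $X_i\in\spt(\mathcal{K})\cap\{t<t_0\}$ with $r_i=d(X_i,X_0)\to 0$ and either $X_i\in\sing(\mathcal{K})$ or $r_i^{-1}R_{reg}(X_i)\to 0$. In other words, the ``non-blowup sub-case'' you identify as the main obstacle never arises. By instead fixing the $\rho_0$ from (1) and negating (2) only for that ball, you create the possibility $r_i\geq c>0$, which you then have to rule out by hand.

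Your handling of that sub-case has gaps as written. First, ``by (1) the $X_i$ must exit every compact subset of the smooth open region, forcing $t_i\to t_0$'' is not correct: the $X_i$ could instead accumulate on the spatial boundary $\partial B^d_{\rho_0}(X_0)\cap\{t<t_0\}$, which is not addressed. (This can be fixed by taking $\rho_0$ strictly smaller than the radius in (1), so the spatial boundary is still in the smooth interior, but you do not say this.) Second, the order of operations is inconsistent: you pick the $X_i$ first for a fixed $\rho_0$, then ``shrink $\rho_0$ further'' to invoke density control near $\mathbf{x}_0$, but shrinking $\rho_0$ below $c$ discards exactly the $X_i$ you chose with $r_i\geq c$, so the ``contradiction with $R_{reg}(X_i)\to 0$'' is no longer about the same points. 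Third, the assertion that the density of $\mathcal{K}$ at $(\mathbf{y},t_0)$ tends to $1$ as $\mathbf{y}\to\mathbf{x}_0$ is true and follows from the same blowup reasoning, but it controls points \emph{close} to $\mathbf{x}_0$, whereas your limit point $(\mathbf{x}_\infty,t_0)$ sits at distance $\geq c$ from $\mathbf{x}_0$; one needs to choose $\rho_0$ at the outset smaller than the scale on which that density bound holds, so that every $(\mathbf{y},t_0)\in B^d_{\rho_0}(X_0)$ with $\mathbf{y}\neq\mathbf{x}_0$ is regular, and \emph{then} run the compactness. The cleanest fix is to drop the sub-case entirely: when proving (2), negate ``$\exists\rho_0$: infimum is positive'' and take $\rho_0=1/i$, which gives $r_i<1/i\to 0$ directly and reduces to the blowup argument you already have for (1).
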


\begin{proof}
We argue by contradiction. Suppose there was no such $\rho_0$, then there would be a sequence of points $X_i=(\mathbf{x}_i,t_i)\in\spt(\mathcal{K})$ with $t_i<t_0$ and $d(X_i,X_0)\to 0$ and so that one of the following situations occurs:
\begin{enumerate}
\item $X_i\in\sing(\mathcal{K})$ for all large $i$;
\item $X_i\in\reg(\mathcal{K})$ and $d(X_i,X_0)^{-1} R_{reg}(X_i)\to 0$.
\end{enumerate}
Let $r_i=d(X_i,X_0)>0$ and $\tilde{X}_i=(r_i^{-1}(\mathbf{x}_i-\mathbf{x}_0),r_i^{-2} (t_i-t_0))$ so $d(\tilde{X}_i,O)=1$ where $O=(\mathbf{0},0)\in\mathbb{R}^{n+1}\times\mathbb{R}$. Up to passing to a subsequence, one may assume $\tilde{X}_i\to\tilde{X}_0$ with $d(\tilde{X}_0,O)=1$. Consider the rescaled flows $\mathcal{K}^{X_0,r_i}$ and, up to passing to a further subsequence, they converge to a tangent flow $\mathcal{T}=\set{\nu_t}_{t\in\mathbb{R}}$ where $\nu_{-1}=\mathcal{H}^n\lfloor\Sigma$ for $\Sigma$ an asymptotically conical self-shrinker. As $\tilde{X}_i\in\spt(\mathcal{K}^{X_0,r_i})$ for all large $i$, by the upper semi-continuity of Gaussian density, one has $\tilde{X}_0\in\spt(\mathcal{T})$. As $\tilde{X}_0\neq O$ and $\mathcal{T}\lfloor((\mathbb{R}^{n+1}\setminus\set{\mathbf{0}})\times (-\infty,0])$ is a smooth flow, it follows that $\tilde{X}_0\in\reg(\mathcal{T})$. By Brakke's regularity theorem \cite{Brakke}, for all large $i$, $\tilde{X}_i\in\reg(\mathcal{K}^{X_0,r_i})$ and $R_{reg}(\tilde{X}_i)>\kappa>0$. It follows that, for all large $i$, $X_i\in\reg(\mathcal{K})$ and $d(X_i,X_0)^{-1} R_{reg}(X_i)>\kappa>0$. That is, neither of the situations occurs and this is a contradiction, finishing the proof.
\end{proof}

Combining Proposition \ref{CondTangentFlowProp} and Lemma \ref{RegRadiusLem} we obtain the following corollary.

\begin{cor} \label{IsolateSingTimeCor}
Fix $n\geq 3$ and $\Lambda\in (\lambda_n,\lambda_{n-1}]$ and assume that \eqref{Assump1} and \eqref{Assump2} hold. Let $\mathcal{K}=\set{\mu_t}_{t\in [-1,1)}$ be a matching motion in $\mathbb{R}^{n+1}$ with  $\lambda[\mu_{-1}]\leq \Lambda$ and assume that $\spt(\mu_{-1})$ is either a closed hypersurface or an asymptotically conical self-shrinker. Then the following is true:
\begin{enumerate}
\item For every $t_0\in (-1,1)$, $\sing_{t_0}(\mathcal{K})=\set{\mathbf{x}_0\colon (\mathbf{x}_0,t_0)\in\sing(\mathcal{K})}$ is a finite set;
\item For every $t_0\in\mathrm{ST}(\mathcal{K})$, there is a $\Delta_0>0$ so that $(t_0-\Delta_0,t_0)\cap\mathrm{ST}(\mathcal{K})=\emptyset$.
\end{enumerate}
\end{cor}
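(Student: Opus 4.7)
The plan is to reduce both parts to a parabolic blowup analysis at singular spacetime points, leveraging the fact that Proposition~\ref{CondTangentFlowProp} constrains every tangent flow of $\mathcal{K}$ at a singular point to have its $-1$ time-slice equal to $\mathcal{H}^n\lfloor\Sigma'$ for a smooth self-shrinker $\Sigma'$ that is either closed (smoothly isotopic to $\mathbb{S}^n$) or asymptotically conical.

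For part (1), the goal is to show that $\sing_{t_0}(\mathcal{K})$ is closed (automatic), bounded, and consists of spatially isolated points, which together force finiteness. Boundedness splits according to the hypothesis on $\spt(\mu_{-1})$: in the closed case, comparison with an enclosing sphere confines $\spt(\mu_t)$ to a fixed bounded region for all $t\in[-1,1)$; in the asymptotically conical case, the smooth asymptotic structure of $\mu_{-1}$ combined with pseudo-locality \cite{INS} keeps $\spt(\mu_t)$ a graphical perturbation of $\mathcal{C}(\spt(\mu_{-1}))$ outside some fixed ball, ruling out singularities there. For spatial isolation, I would assume distinct $\mathbf{x}_i\to \mathbf{x}_0$ in $\sing_{t_0}(\mathcal{K})$, set $r_i=|\mathbf{x}_i-\mathbf{x}_0|$, and pass to a subsequential tangent flow $\mathcal{T}=\set{\nu_t}$ of $\mathcal{K}^{X_0,r_i}$. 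The rescaled points $\tilde{X}_i=(r_i^{-1}(\mathbf{x}_i-\mathbf{x}_0),0)$ converge along a further subsequence to some $\tilde{X}_0$ with $|\tilde{X}_0|=1$, and upper semicontinuity of Gaussian density at $\tilde{X}_0$ inherits the strict density bound coming from each $X_i$ being a non-flat shrinker point. But at $t=0$ the tangent flow has $\spt(\nu_0)=\emptyset$ in the closed case (giving density $0$ at $\tilde{X}_0$) and $\spt(\nu_0)=\mathcal{C}(\Sigma')$ in the asymptotically conical case (giving density $1$ at $\tilde{X}_0$, since the regular cone is smooth off the origin); either alternative contradicts the density lower bound.

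For part (2), after enumerating $\sing_{t_0}(\mathcal{K})=\set{\mathbf{x}_1,\ldots,\mathbf{x}_N}$ using (1), I would apply Lemma~\ref{RegRadiusLem} at each $X_j=(\mathbf{x}_j,t_0)$ to obtain a parabolic radius $\rho_0(X_j)>0$ with $\mathcal{K}\lfloor(B^d_{\rho_0(X_j)}(X_j)\cap\set{t<t_0})$ smooth. If no such $\Delta_0$ exists, pick singular spacetime points $(\mathbf{y}_i,s_i)$ with $s_i\to t_0^-$; boundedness from part (1) yields a convergent subsequence $\mathbf{y}_i\to\mathbf{y}_\infty$, and upper semicontinuity of Gaussian density forces $(\mathbf{y}_\infty,t_0)\in\sing(\mathcal{K})$, so $\mathbf{y}_\infty=\mathbf{x}_j$ for some $j$. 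Then for large $i$, $(\mathbf{y}_i,s_i)\in B^d_{\rho_0(X_j)}(X_j)\cap\set{t<t_0}$, where the flow is smooth, contradicting $(\mathbf{y}_i,s_i)\in\sing(\mathcal{K})$.

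The main subtlety I expect is making the tangent flow machinery work uniformly across the two alternatives of Proposition~\ref{CondTangentFlowProp}(1). Lemma~\ref{RegRadiusLem} is stated for asymptotically conical shrinker slices, but its proof only uses smoothness of $\mathcal{T}$ on $(\mathbb{R}^{n+1}\setminus\set{\mathbf{0}})\times(-\infty,0]$ away from the spacetime origin, which remains valid when $\Sigma'$ is closed (there $\nu_t$ is smooth for $t<0$ and $\spt(\nu_t)=\emptyset$ for $t\geq 0$, and $|\tilde{X}_0|=1>0$ rules out the degenerate configuration). A secondary point is that the boundedness of $\sing_{t_0}(\mathcal{K})$ in the asymptotically conical case does not come directly from Proposition~\ref{CondTangentFlowProp}(2), which assumes a strict entropy gap we do not have here; pseudo-locality applied to the asymptotic cone structure of $\mu_{-1}$ provides the qualitative bound that suffices.
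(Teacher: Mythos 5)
The paper gives no written proof of this corollary (it is deduced by ``combining'' Proposition~\ref{CondTangentFlowProp} and Lemma~\ref{RegRadiusLem}), so there is no canonical argument to compare against; your proof fills the gap in the natural way and is correct in its essentials. Part~(2) is exactly the intended use of Lemma~\ref{RegRadiusLem}: after (1) gives finitely many singular spatial points at time $t_0$, one covers them by the backward parabolic balls the lemma provides and rules out singular times accumulating from below by upper semicontinuity of Gaussian density. Part~(1) requires the extra blowup at the fixed time $t_0$ that you supply; the key point, which you identify, is that the rescaled singular points land at parabolic distance $1$ from the origin at time $0$, where any tangent flow permitted by Proposition~\ref{CondTangentFlowProp}(1) has Gaussian density at most $1$ (it is $0$ for a compact shrinker flow since $\spt(\nu_0)\subseteq\set{\OO}$, and at most $1$ on the regular part of $\mathcal{C}(\Sigma')$ in the asymptotically conical case), contradicting the uniform density gap $\geq 1+\epsilon(n)$ at singular points coming from White's regularity theorem.

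Two remarks on points you flagged. First, your observation that Lemma~\ref{RegRadiusLem} is stated only under the hypothesis that every tangent flow at $X_0$ is asymptotically conical, while here some tangent flows may be compact, is a genuine gap in the literal statement; your fix -- that the proof of the lemma only uses smoothness of $\mathcal{T}$ on $(\Real^{n+1}\setminus\set{\OO})\times(-\infty,0]$, which holds just as well in the compact case -- is exactly right and should be spelled out. Second, the boundedness of $\sing_{t_0}(\mathcal{K})$ in the asymptotically conical case is as you say: one does not need the quantitative bound of Proposition~\ref{CondTangentFlowProp}(2) (which requires the entropy gap $\epsilon_0$), only a qualitative bound for the fixed time $t_0$, and this follows by iterating pseudo-locality from $t=-1$ forward, using that $\set{\sqrt{-t}\,\Sigma}_{t\in[-1,0)}$ is the unique smooth evolution of $\mu_{-1}$ and that $|A_\Sigma|=O(|\mathbf{x}|^{-1})$ at infinity. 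The only minor imprecision is the phrase ``strict density bound coming from each $X_i$ being a non-flat shrinker point''; the lower bound $\Theta(\mathcal{K},X_i)\geq 1+\epsilon(n)$ is a consequence of White's local regularity theorem at singular points, not a property of the tangent flow per se, and it is worth stating it that way so the upper semicontinuity step is unambiguous.
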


\begin{lem} \label{DensityLem}
Fix $n\geq 3$, $\Lambda\in (\lambda_n,\lambda_{n-1}]$, $\epsilon_0\in (0,\Lambda)$ and $\delta\in (0,1)$. Assume that \eqref{Assump1} and \eqref{Assump2} hold. Let $\mathcal{K}=\set{\mu_t}_{t\in [-1,1)}$ be a matching motion in $\mathbb{R}^{n+1}$ with $\lambda[\mu_{-1}]\leq \Lambda-\epsilon_0$. Suppose that $(\mathbf{x}_0,t_0)\in\spt(\mathcal{K})$ is such that every $\mathcal{T}\in\mathrm{Tan}_{(\mathbf{x}_0,t_0)}\mathcal{K}$ is an $(R_0,C_0)$-regular almost a.c.-isotopy. Given $\alpha>R_1$ and $\gamma\in (0,1)$, there exists $\rho_1=\rho_1(\mathcal{K},\mathbf{x}_0,t_0,\alpha,\gamma)>0$ so that: if $\rho<\rho_1$ and
$$
I_\delta^c(\rho)=\set{s\in (0,\rho^2)\colon \mbox{$\spt(\mu_{t_0+s})$ is not $2\delta$-isotopic to $\spt(\mu_{t_0-s})$ in $B_{4\alpha\rho}(\mathbf{x}_0)$}},
$$
then one has
$$
\mathcal{L}^1(I_\delta^c(\rho))\leq\gamma\rho^2.
$$
Here $R_0=R_0(n,\Lambda,\epsilon_0)$ and $C_0=C_0(n,\Lambda,\epsilon_0)$ are given by Proposition \ref{CondTangentFlowProp}, and $R_1=R_1(n,\Lambda,\epsilon_0,\delta)$ is chosen by  Proposition \ref{CompactnessProp} and Lemma \ref{ACIsotopyLem}.
\end{lem}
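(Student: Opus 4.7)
The plan is to argue by contradiction through a parabolic blow-up, concluding with the bounded convergence theorem. Suppose the lemma fails. Then there exists a sequence $\rho_i\downarrow 0$ with $\mathcal{L}^1(I_\delta^c(\rho_i))>\gamma\rho_i^2$. Consider the rescaled flows $\mathcal{K}_i=\mathcal{K}^{(\mathbf{x}_0,t_0),\rho_i}=\{\mu^i_t\}$ and the rescaled bad-time sets $\tilde I_i=\rho_i^{-2}I_\delta^c(\rho_i)\subset(0,1)$. Since the notion of $\delta$-isotopy is invariant under parabolic scaling, $\tilde I_i$ is exactly the set of $s\in(0,1)$ for which $\spt(\mu^i_s)$ fails to be $2\delta$-isotopic to $\spt(\mu^i_{-s})$ inside $B_{4\alpha}(\mathbf{0})$, and $\mathcal{L}^1(\tilde I_i)>\gamma$. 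By the entropy bound and Proposition \ref{TangentFlowProp}, after passing to a subsequence $\mathcal{K}_i\to\mathcal{T}$ for some $\mathcal{T}=\{\nu_t\}_{t\in\mathbb{R}}\in\mathrm{Tan}_{(\mathbf{x}_0,t_0)}\mathcal{K}$, which by hypothesis is an $(R_0,C_0)$-regular almost a.c.-isotopy with $\nu_{-1}=\mathcal{H}^n\lfloor\Sigma$ for an asymptotically conical self-shrinker $\Sigma$.

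The main step is to establish the pointwise convergence $\mathbf{1}_{\tilde I_i}(s)\to 0$ for $\mathcal{L}^1$-a.e. $s\in(0,1)$. Put $J=(0,1)\setminus\mathrm{ST}(\mathcal{T})$, which has full measure in $(0,1)$ by the first condition in the definition of an almost a.c.-isotopy. Fix $s\in J$. Then $\spt(\nu_s)$ is smooth and $(R_0,C_0)$-regular a.c.-isotopic to $\Sigma$, while by the backward self-similarity of $\mathcal{T}$ one has $\spt(\nu_{-s})=\sqrt{s}\,\Sigma$, which is trivially $(R_0,C_0)$-regular a.c.-isotopic to $\Sigma$ (possibly after enlarging the constants in an $s$-independent fashion). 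Choosing $R_1$ large enough, via Proposition \ref{CompactnessProp} and Lemma \ref{ACIsotopyLem} applied with error $\delta/4$, for any $\alpha>R_1$ both $\spt(\nu_s)\cap B_{4\alpha}$ and $\spt(\nu_{-s})\cap B_{4\alpha}$ are $(\delta/4)$-isotopic to $\Sigma\cap B_{4\alpha}$, and concatenating these two isotopies gives a $(\delta/2)$-isotopy between $\spt(\nu_s)\cap B_{4\alpha}$ and $\spt(\nu_{-s})\cap B_{4\alpha}$.

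Next I would transfer the isotopies from $\mathcal{T}$ down to $\mathcal{K}_i$. For $s\in J$ every point of $\spt(\nu_s)\cap\bar B_{4\alpha}$ is a regular point of $\mathcal{T}$, so Brakke's regularity theorem implies that the convergence $\mathcal{K}_i\to\mathcal{T}$ is smooth on a spacetime neighborhood of the compact slice $\spt(\nu_s)\cap\bar B_{4\alpha}\times\{s\}$ (and likewise near time $-s$, where the slice is $\sqrt{s}\,\Sigma$). Hence, for all sufficiently large $i$ (depending on $s$), $\spt(\mu^i_s)\cap B_{4\alpha}$ is a normal graph of $C^2$ size at most $\delta/4$ over $\spt(\nu_s)\cap B_{4\alpha}$, and similarly for $-s$. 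Chaining these two normal-graph approximations with the $(\delta/2)$-isotopy between the two tangent-flow slices and absorbing the mismatch by a partition-of-unity gluing in the spirit of Lemma \ref{IsotopyGluingLem}, one obtains a $2\delta$-isotopy between $\spt(\mu^i_s)\cap B_{4\alpha}$ and $\spt(\mu^i_{-s})\cap B_{4\alpha}$. Therefore $s\notin\tilde I_i$ for all large $i$, which yields $\mathbf{1}_{\tilde I_i}(s)\to 0$ a.e. on $(0,1)$. The bounded convergence theorem then forces $\mathcal{L}^1(\tilde I_i)\to 0$, contradicting $\mathcal{L}^1(\tilde I_i)>\gamma$.

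The principal technical obstacle is the isotopy concatenation in the third step: verifying that the composition of the two $(\delta/4)$-isotopies from Lemma \ref{ACIsotopyLem} with the normal-graph perturbations from Brakke regularity still meets both the nesting requirement $\mathbf{F}(\tau)\circ\mathbf{f}_0^{-1}(B_R)\subset B_{2R}$ and the $C^1$ annular bound in the definition of $\delta$-isotopy, with total error at most $2\delta$. Since the normal-graph isotopies are uniformly close to the identity throughout $B_{4\alpha}$ and the Lemma \ref{ACIsotopyLem} isotopies are designed to satisfy the annular condition, this amounts to a careful selection of $R_1$ and a gluing argument along the lines of Lemma \ref{IsotopyGluingLem}; nonetheless it is the step in which all of the scales $(\delta,R_0,C_0,R_1)$ must be balanced against one another.
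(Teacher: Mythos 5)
Your proof is correct and follows essentially the same contradiction/blow-up argument as the paper: rescale along a sequence $\rho_i\to 0$, extract a tangent flow $\mathcal{T}$ which is an $(R_0,C_0)$-regular almost a.c.-isotopy, use Lemma \ref{ACIsotopyLem} to produce the $\delta$-isotopy between $\spt(\nu_s)$ and $\spt(\nu_{-s})$ for regular times $s$, transfer it to $\mathcal{K}^{(\mathbf{x}_0,t_0),\rho_i}$ via smooth convergence, and contradict the measure bound. The paper compresses the final step into the phrase ``the nature of the convergence contradicts the assumption on the size of $I_\delta^c(\rho_i)$''; your invocation of the bounded convergence theorem is exactly the detail that makes that sentence rigorous.
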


\begin{proof}
We argue by contradiction. That is, suppose there was no such $\rho_1$.  That means there are a sequence of $\rho_i \to 0$ so that $\mathcal{L}^1(I_\delta^c(\rho_i))>\gamma \rho_i^2$. 

Up to passing to a subsequence, one has that $\mathcal{K}^{(\mathbf{x}_0,t_0), \rho_i}$ converges to an element $\mathcal{T}=\set{\nu_t}_{t\in\mathbb{R}}\in \mathrm{Tan}_{(\mathbf{x}_0,t_0)}\mathcal{K}$ with $\nu_{-1}=\mathcal{H}^n\lfloor\Sigma$ for $\Sigma$ an asymptotically conical self-shrinker. By our hypotheses, $\mathcal{T}$ is an $(R_0,C_0)$-regular almost a.c.-isotopy. Thus, $\mathrm{ST}(\mathcal{T})\cap (0,1)$ has Lebesgue measure zero and, by Lemma \ref{ACIsotopyLem}, for every $t\in(0,1)\setminus \mathrm{ST}(\mathcal{T})$, $\spt(\nu_t)$ is $\delta$-isotopic to $\spt(\nu_{-t})$ in $B_{4\alpha}$. However, the nature of the convergence contradicts the assumption on the size of $I_{\delta}^c(\rho_i)$. This proves the proposition.
\end{proof}

\begin{prop} \label{LocAlmostIsotopyProp}
Fix $n\geq 3$, $\Lambda\in (\lambda_n,\lambda_{n-1}]$ and $\epsilon_0\in (0,\Lambda)$. Assume that \eqref{Assump1} and \eqref{Assump2} hold. Let $\mathcal{K}=\set{\mu_t}_{t\in [-1,1)}$ be a matching motion in $\mathbb{R}^{n+1}$ with $\lambda[\mu_{-1}]\leq\Lambda-\epsilon_0$ and assume $\Sigma=\spt(\mu_{-1})$ is a closed  hypersurface. If $t_0\in (-1,1)$ is such that, for every $(\mathbf{x}_0,t_0)\in\spt(\mathcal{K})$, every $\mathcal{T}\in\mathrm{Tan}_{(\mathbf{x}_0,t_0)}\mathcal{K}$ is an $(R_0,C_0)$-regular almost a.c.-isotopy, where $R_0$ and $C_0$ are given by Proposition \ref{CondTangentFlowProp}, then there exists $\rho_2=\rho_2(\mathcal{K},t_0)>0$ so that: if $\rho<\rho_2$ and 
$$
I_{iso}^c(\rho)=\set{s\in (0, \rho^2)\colon \spt(\mu_{t_0+s}) \mbox{ is not isotopic to } \spt(\mu_{t_0-s})},
$$
then one has
$$
\mathcal{L}^1(I_{iso}^c(\rho))\leq \frac{1}{2}\rho^2.
$$

The same conclusions hold if one instead supposes  $\Sigma=\spt(\mu_{-1})$ is an asymptotically conical self-shrinker, with the set
$$
I_{iso}^c(\rho)=\set{s\in (0, \rho^2)\colon \spt(\mu_{t_0+s}) \mbox{ is not $(R_0,C_0)$-regular a.c.-isotopic to} \spt(\mu_{t_0-s})}.
$$
\end{prop}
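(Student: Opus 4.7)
The plan is to reduce the proposition to Lemma \ref{DensityLem} applied at each singular point of $\spt(\mu_{t_0})$ and then to glue the resulting local isotopies together with the ambient smooth flow via Lemma \ref{IsotopyGluingLem}. First, by Corollary \ref{IsolateSingTimeCor}, the set $\sing_{t_0}(\mathcal{K}) = \{\mathbf{x}_1, \ldots, \mathbf{x}_J\}$ is finite (the case $J = 0$ is trivial). At every other point of $\spt(\mu_{t_0})$ the flow is smooth in a space-time neighborhood. Using Lemma \ref{RegRadiusLem} at each $(\mathbf{x}_j, t_0)$ combined with compactness of $\spt(\mu_{t_0})$ in the closed case (respectively with the $(R_0, C_0)$-regular a.c.~behavior from Proposition \ref{CondTangentFlowProp}(2) in the a.c.~case), the regularity radius at points of $\spt(\mu_{t_0}) \setminus \{\mathbf{x}_1, \ldots, \mathbf{x}_J\}$ admits a uniform positive lower bound $c_0$ measured against distance to $\{\mathbf{x}_1, \ldots, \mathbf{x}_J\}$.

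Fix $\delta_0 = \delta_0(n, c_0)$ from Lemma \ref{IsotopyGluingLem}. Let $R_1 = R_1(n, \Lambda, \epsilon_0, \delta_0/2)$ be the radius provided by Lemma \ref{ACIsotopyLem} together with Proposition \ref{CompactnessProp}, and set $\alpha = 2 R_1$. For each $1 \leq j \leq J$, invoke Lemma \ref{DensityLem} at $(\mathbf{x}_j, t_0)$ with parameters $\delta = \delta_0/2$, this $\alpha$, and $\gamma = 1/(2J)$, producing $\rho_1^{(j)} > 0$ so that, for $\rho < \rho_1^{(j)}$, the bad set $I_\delta^c(\rho, \mathbf{x}_j) \subseteq (0, \rho^2)$ has $\mathcal{L}^1$-measure at most $\rho^2/(2J)$. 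Take $\rho_2$ to be the minimum of the $\rho_1^{(j)}$, further shrunk so that (i) the balls $\bar{B}_{4\alpha\rho_2}(\mathbf{x}_j)$ are mutually disjoint, and (ii) for every $\rho < \rho_2$ the flow $\mathcal{K}$ is a smooth mean curvature flow on $(\mathbb{R}^{n+1} \setminus \bigcup_{j=1}^J B_{\alpha\rho}(\mathbf{x}_j)) \times [t_0 - \rho^2, t_0 + \rho^2]$, producing, between any two time-slices in this window, a continuous normal-graph family $u_\tau$ of $C^1$-norm at most $\delta_0$ on scale $\alpha\rho$.

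For $\rho < \rho_2$, set $I_{bad}(\rho) = \bigcup_{j=1}^J I_\delta^c(\rho, \mathbf{x}_j)$, so $\mathcal{L}^1(I_{bad}(\rho)) \leq \rho^2/2$ by the union bound. For any $s \in (0, \rho^2) \setminus I_{bad}(\rho)$, inside each $B_{4\alpha\rho}(\mathbf{x}_j)$ the slices $\spt(\mu_{t_0 - s})$ and $\spt(\mu_{t_0 + s})$ are $\delta_0$-isotopic (since $2\delta = \delta_0$), while outside the balls the smooth flow from time $t_0 - s$ to $t_0 + s$ furnishes the required family $u_\tau$. The hypotheses of Lemma \ref{IsotopyGluingLem} (with $p_j = \mathbf{x}_j$, $r_j = \alpha\rho$, $\Sigma_0 = \spt(\mu_{t_0 - s})$ and $\Sigma_1 = \spt(\mu_{t_0 + s})$) are thus satisfied, and it produces a global isotopy between the two slices. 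Hence $I_{iso}^c(\rho) \subseteq I_{bad}(\rho)$, which yields the desired measure bound. For the asymptotically conical case, the argument is the same, except one additionally invokes Proposition \ref{BasicRegProp} together with Proposition \ref{CondTangentFlowProp}(2) to control the trailing-off behavior at infinity, so that the assembled isotopy is automatically $(R_0, C_0)$-regular a.c.-isotopic.

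The principal technical obstacle is establishing step (ii): controlling smoothness of $\mathcal{K}$ on the annular region $\mathbb{R}^{n+1} \setminus \bigcup_j B_{\alpha\rho}(\mathbf{x}_j)$ during the full window $[t_0 - \rho^2, t_0 + \rho^2]$, and in particular for $t > t_0$ where Lemma \ref{RegRadiusLem} does not apply directly. This requires a two-sided-in-time regularity argument via Brakke's theorem, upper semi-continuity of Gaussian density, and the hypothesis that every tangent flow at $(\mathbf{x}_j, t_0)$ is an $(R_0, C_0)$-regular almost a.c.-isotopy, which together rule out extraneous singularities in the annulus on the relevant parabolic scale and allow one to treat $\alpha$ as a fixed multiple of $R_1$ rather than a $\rho$-dependent quantity.
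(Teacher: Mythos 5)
Your overall skeleton matches the paper's proof: finitely many singular points at time $t_0$ (Corollary \ref{IsolateSingTimeCor}), the lower bound $R_{reg}(X)>c_0\, d(X,\sing_{t_0}(\mathcal{K}))$ for times just before $t_0$ (Lemma \ref{RegRadiusLem}), Lemma \ref{DensityLem} at each $\mathbf{x}_j$ with $\delta=\delta_0/2$ and $\gamma=1/(2N(t_0))$, a union bound, and gluing via Lemma \ref{IsotopyGluingLem}. However, there is a genuine gap at exactly the step you flag as the ``principal technical obstacle,'' and your proposed resolution does not work as stated. Two problems. First, you fix $\alpha=2R_1$, where $R_1=R_1(n,\Lambda,\epsilon_0,\delta_0/2)$. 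But hypothesis (2) of Lemma \ref{IsotopyGluingLem} requires the slice $\spt(\mu_{t_0+s})$ to be a normal graph of relative size $\le\delta_0$ over $\spt(\mu_{t_0-s})$ on the complement of the balls of radius $r_1=\alpha\rho$. On that region the flow is graphical on scale $\approx c_0\alpha\rho$, so its speed is of order $(c_0\alpha\rho)^{-1}$ and its displacement over the window of length $\sim\rho^2$ is of order $\rho/(c_0\alpha)$; relative to the scale $\alpha\rho$ this is of order $1/(c_0\alpha^2)$, a scale-invariant quantity that does \emph{not} improve as $\rho\to 0$. Hence with $\alpha$ fixed independently of $c_0$ (and $\delta_0$) the gluing hypothesis can simply fail, no matter how small you take $\rho_2$. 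This is why the paper's proof emphasizes that $\alpha$ may be taken arbitrarily large: one needs $\alpha$ large depending on $c_0$ and $\delta_0$ (which Lemma \ref{DensityLem} permits, since it holds for every $\alpha>R_1$).

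Second, the forward-in-time regularity and graphical control on $\spt(\mathcal{K})$ away from the $\mathbf{x}_j$'s for $t\in(t_0,t_0+\rho^2]$ is not supplied by ``Brakke's theorem, upper semi-continuity of Gaussian density, and the hypothesis that the tangent flows are $(R_0,C_0)$-regular almost a.c.-isotopies.'' An almost a.c.-isotopy is still allowed to have singular times after $t=0$, so the tangent-flow hypothesis alone does not rule out singularities of $\mathcal{K}$ at times in $(t_0,t_0+\rho^2)$ at distance comparable to $\alpha\rho$ from the $\mathbf{x}_j$'s, nor does it give the quantitative $C^1$ closeness of the $t_0+s$ slice to the $t_0-s$ slice needed for $u_\tau$. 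The paper closes exactly this gap by invoking the pseudo-locality theorem of \cite{INS}: since at times just before $t_0$ the flow is graphical on scale $\ge c_0\alpha\rho$ at every point at distance $\ge\alpha\rho$ from $\sing_{t_0}(\mathcal{K})$, pseudo-locality keeps it smooth and graphically controlled there for a forward time of order $(c_0\alpha\rho)^2$, which dominates $2\rho^2$ once $\alpha$ is chosen large; this simultaneously yields the family $u_\tau$ and legitimizes the application of Lemma \ref{IsotopyGluingLem}. With these two corrections (let $\alpha$ depend on $c_0,\delta_0$, and use pseudo-locality for the forward window) your argument coincides with the paper's; the treatment of the asymptotically conical case via Proposition \ref{CondTangentFlowProp} is fine.
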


\begin{proof}
By Lemma \ref{RegRadiusLem} and Corollary \ref{IsolateSingTimeCor}, there exist $\rho_0=\rho_0(\mathcal{K},t_0)>0$ and $c_0=c_0(\mathcal{K},t_0)>0$ so that if $X=(\mathbf{x},t)\in\spt(\mathcal{K})$ with $t\in (t_0-\rho_0,t_0)$, then $X\in \reg(\mathcal{K})$ and $R_{reg}(X)>c_0 d(X,\mathrm{sing}_{t_0}(\mathcal{K}))$. Let $\delta_0=\delta_0(n,c_0)$ be the number given by Lemma \ref{IsotopyGluingLem}. Let $N(t_0)$ be the number of elements of $\mathrm{sing}_{t_0}(\mathcal{K})=\set{\mathbf{x}_1, \ldots, \mathbf{x}_{N(t_0)}}$. Hence, it follows from Lemma \ref{DensityLem} with $\delta=\frac{\delta_0}{2}$ and $\gamma=\frac{1}{2N(t_0)}$ that for every large $\alpha$ there exists $\rho_1=\rho_1(\mathcal{K},t_0,\alpha)>0$ so that for every $\rho<\rho_1$ for all $s\in (0,\rho^2)\setminus I_{\delta,i}^c(\rho)=I_{\delta, i}(\rho)$ one has $\spt(\mu_{t_0+s})$ is $\delta$-isotopic to $\spt(\mu_{t_0-s})$ in $B_{4\alpha\rho}(\mathbf{x}_i)$ where  $(\mathbf{x}_i,t_0)\in\sing(\mathcal{K})$, $1\leq i \leq N(t_0)$. As $\alpha$ may be arbitrarily large, one uses the pseudo-locality \cite{INS} and Lemma \ref{IsotopyGluingLem} to patch these $\delta$-isotopies with the flow $\mathcal{K}$ and obtain isotopies between $\spt(\mu_{t_0+s})$ and $\spt(\mu_{t_0-s})$ for any $s\in \bigcap_{i=1}^{N(t_0)} I_{\delta, i} (\rho)$ with $\rho>0$ small.  Clearly,
$$
I_{iso}^c(\rho)\subseteq (0, \rho^2) \setminus \bigcap_{i=1}^{N(t_0)} I_{\delta, i}(\rho)=\bigcup_{i=1}^{N(t_0)} I_{\delta,i}^c(\rho)
$$
and so, by Lemma \ref{DensityLem} and the choice of $\gamma$ one has
$$
\mathcal{L}^1(I_{iso}^c(\rho))\leq \sum_{i=1}^{N(t_0)} \gamma \rho^2 =\frac{1}{2}\rho^2.
$$
 Moreover, if $\Sigma$ is an asymptotically conical self-shrinker, then Proposition \ref{CondTangentFlowProp} implies these isotopies are $(R_0,C_0)$-regular. This last observation concludes the proof.
\end{proof}

\begin{thm} \label{AlmostACIsotopyThm}
Fix $n\geq 3$, $\Lambda\in (\lambda_n,\lambda_{n-1}]$ and $\epsilon_0\in (0,\Lambda)$. Assume that \eqref{Assump1} and \eqref{Assump2} hold. Let $\mathcal{K}=\set{\mu_t}_{t\in [-1,1)}$ be a matching motion in $\mathbb{R}^{n+1}$ with $\lambda[\mu_{-1}]\leq \Lambda-\epsilon_0$ and assume $\Sigma=\spt(\mu_{-1})$ is a closed connected hypersurface (resp. $\Sigma=\spt(\mu_{-1})$ is an asymptotically conical self-shrinker). If for every $(\mathbf{x}_0,t_0)\in\spt(\mathcal{K})$ with $t_0\in (-1,1)$ (resp. $t_0\in (0,1)$), every $\mathcal{T}=\set{\nu_t}_{t\in\mathbb{R}}\in\mathrm{Tan}_{(\mathbf{x}_0,t_0)}\mathcal{K}$ satisfies either
\begin{enumerate}
\item $\spt(\nu_{-1})$ is compact; or
\item $\spt(\nu_{-1})$ is non-compact and $\mathcal{T}$ is an $(R_0,C_0)$-regular almost a.c.-isotopy, where $R_0$ and $C_0$ are given by Proposition \ref{CondTangentFlowProp},
\end{enumerate}
then $\mathcal{K}$ is an almost isotopy (resp. $\mathcal{K}$ is an $(R_0,C_0)$-regular almost a.c.-isotopy).
\end{thm}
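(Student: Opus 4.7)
The plan is to show first that $S = \mathrm{ST}(\mathcal{K})$ has Lebesgue measure zero and is contained in $(-1,1)$, and second that $\spt(\mu_t)$ has the desired (a.c.-)isotopy class at every $t \notin S$. Combining Brakke regularity with pseudo-locality and Proposition \ref{CondTangentFlowProp} shows that every regular point of $\mathcal{K}$ has a space-time neighborhood consisting of regular points, so $S$ is closed in $[-1,1)$; Corollary \ref{IsolateSingTimeCor}(2) then produces pairwise-disjoint intervals $(t_0-\Delta_0, t_0)$ of positive length, one per $t_0\in S$, forcing $S$ to be countable and hence measure-zero. Smoothness of $\Sigma$ gives $-1 \notin S$. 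The complement $[-1,1) \setminus S$ is therefore a countable disjoint union of open components $\{C_\alpha\}$, on each of which the flow is smooth and thus $\spt(\mu_t)$ has a well-defined (a.c.-)isotopy class $\Phi(C_\alpha)$, possibly empty. Call $C_\alpha$ \emph{good} if $\Phi(C_\alpha) \in \{[\Sigma], \emptyset\}$, and \emph{bad} otherwise.

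Suppose for contradiction some $C_\alpha$ is bad; let $B$ be the union of bad components and set $t^* = \inf B$. Then $t^* > -1$ (the initial component is good) and $t^* \in S$ (otherwise $t^*$ would lie in a single component which would then contain both good and bad times, absurd). Pick any bad $D_0 = (a_0, b_0)$. Proposition \ref{LocAlmostIsotopyProp} applied at $a_0 \in S$ gives a positive-measure set of $s \in (0, \rho^2)$ on which $\spt(\mu_{a_0+s})$ is (a.c.-)isotopic to $\spt(\mu_{a_0-s})$; since for $s < b_0-a_0$ the right side lies in $D_0$ with bad class $\Phi(D_0)$, the component $D_1$ immediately left of $a_0$ (which exists by left-isolation of $a_0$) must share $\Phi(D_0)$ and hence is bad. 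Iterating transfinitely produces an ordinal-indexed chain of bad components $D^{(\beta)} = (a^{(\beta+1)}, a^{(\beta)})$ with strictly decreasing $a^{(\beta)} \in S$, all of class $\Phi(D_0)$. At a limit ordinal $\lambda$, $a^{(\lambda)} = \lim_{\beta<\lambda} a^{(\beta)} \in S$ by closedness, and the previously produced bad components accumulate at $a^{(\lambda)}$ from the right, filling $(a^{(\lambda)}, a^{(0)})$ modulo the countable set $\{a^{(\beta)}\}$; applying Proposition \ref{LocAlmostIsotopyProp} at $a^{(\lambda)}$ again forces the component immediately left of $a^{(\lambda)}$ to share the class $\Phi(D_0)$, hence be bad (the positive-measure set of ``good'' $s$ lands in a bad right-component, matching the class on the left). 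Since a strictly decreasing $\mathbb{R}$-valued sequence has countable ordinal length, the chain terminates at $a^{(\beta_*)} = t^*$ for some countable ordinal $\beta_*$.

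At $t^*$ itself, the bad chain $\bigcup_{\beta \le \beta_*} D^{(\beta)}$ covers $(t^*, a_0)$ up to a countable set of endpoints, so for every $\rho^2 < a_0 - t^*$ the set $B \cap (t^*, t^* + \rho^2)$ has Lebesgue measure equal to $\rho^2$. On the other hand, Proposition \ref{LocAlmostIsotopyProp} applied at $t^*$ (whose immediately-left component is good by choice of $t^*$) bounds that measure by $\rho^2/2$, yielding the contradiction. Hence no bad component exists, completing the proof. The main technical obstacle will be the careful bookkeeping of the transfinite descent at limit ordinals; one must verify both the closedness argument placing each $a^{(\lambda)}$ in $S$ and the matching of classes across $a^{(\lambda)}$, which comes down to the same collision between the accumulating-descent lower bound $\rho^2$ and Proposition \ref{LocAlmostIsotopyProp}'s upper bound $\rho^2/2$. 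For the compact tangent flow alternative (hypothesis (1)), a parallel local statement is needed: a compact tangent flow forces $\spt(\mu_t)$ to become locally empty just after $t_0$, which is compatible with the ``$\emptyset$ or $[\Sigma]$'' dichotomy, and combining this local behavior with the a.c.~case at each singular time gives the same descent.
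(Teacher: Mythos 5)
Your proof takes a genuinely different route from the paper's. Where you run a transfinite descent on "bad" components, the paper proves that for any interval $(a,b)$ with left endpoint $a\in B^c$ (the good set), $\sup(B^c\cap(a,b))=b$, and then covers $\mathrm{ST}(\mathcal{K})$ by finitely many intervals of total length $<\epsilon$ whose left endpoints can be arranged to lie in $B^c$, concluding $\mathcal{L}^1(B^c)\geq 2-\epsilon$; since $B$ is open, $\mathcal{L}^1(B)=0$ then forces $B=\emptyset$. Your descent argument can be made to work (the chain is strictly decreasing in $S$, bounded below by $t^*$, has countable ordinal length, and the successor/limit steps transfer the bad class left via Proposition \ref{LocAlmostIsotopyProp}), and your observation that the pairwise-disjoint left-isolation intervals from Corollary \ref{IsolateSingTimeCor}(2) force $\mathrm{ST}(\mathcal{K})$ to be countable is a clean alternative to the paper's Lebesgue density argument. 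The covering argument is somewhat more elementary; the transfinite argument requires more care at limit ordinals and in establishing that the chain in fact terminates at $t^*$.

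However, there is a genuine gap in your treatment of the compact tangent-flow alternative, and the sentence you wrote about it points in the wrong direction. Proposition \ref{LocAlmostIsotopyProp} only applies at a time $t_0$ at which \emph{every} tangent flow is an $(R_0,C_0)$-regular almost a.c.-isotopy (in particular non-compact), yet the theorem's hypotheses permit compact tangent flows, and you invoke Proposition \ref{LocAlmostIsotopyProp} at every $a^{(\beta)}$ in the descent. Your proposed "parallel local statement" — that a compact tangent flow "forces $\spt(\mu_t)$ to become locally empty just after $t_0$, which is compatible with the `$\emptyset$ or $[\Sigma]$' dichotomy" — is not correct as stated: if some component shrinks away locally while the rest of the flow persists, the isotopy class of $\spt(\mu_t)$ jumps (different number of components), which is precisely the kind of change one must rule out, not absorb. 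The paper resolves this globally rather than locally: it first reduces, WLOG by restricting to the extinction time and rescaling parabolically, to a flow that does not go extinct on $[-1,1)$, and then observes that because $\Sigma$ is connected, a compact tangent flow can only occur at the extinction time — so after the reduction, every tangent flow encountered is non-compact and Proposition \ref{LocAlmostIsotopyProp} applies. You should incorporate this reduction. Separately, for the a.c.-shrinker version of the statement, the time $t=0$ (where $\spt(\mu_0)=\mathcal{C}(\Sigma)$ is singular everywhere) needs special treatment; the paper handles this via Proposition \ref{BasicRegProp}, which you do not cite.
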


\begin{proof}
By Corollary \ref{IsolateSingTimeCor}, every point in $\mathrm{ST}(\mathcal{K})$ has Lebesgue density at most $\frac{1}{2}$. As $\mathrm{ST}(\mathcal{K})$ is a closed set, the Lebesgue density theorem implies $\mathrm{ST}(\mathcal{K})$ has Lebesgue measure zero.

We first suppose $\Sigma=\spt(\mu_{-1})$ is a closed connected hypersurface. Without loss of generality we assume $\mathcal{K}$ does not disappear, as otherwise we restrict the flow up to the extinction time, and translate in time and do a parabolic dilatation to obtain a new flow satisfying the hypotheses that does not go extinct. Let 
$$
B=\set{t\in (-1,1)\setminus\mathrm{ST}(\mathcal{K})\colon \spt(\mu_t) \mbox{ is not isotopic to } \Sigma}.
$$
The openness of $B$ ensures that it is enough to show that $B$ has Lebesgue measure zero in order to conclude $\mathcal{K}$ is an almost isotopy. 

Let
$$
B^c=\set{t\in (-1,1)\setminus\mathrm{ST}(\mathcal{K})\colon \spt(\mu_t) \mbox{ is isotopic to } \Sigma}
$$
so $B\cup B^c=(-1,1)\setminus\mathrm{ST}(\mathcal{K})$. First we show that for any $I=(a,b)$ with $-1\leq a<b\leq 1$ and $a\in B^c$
$$
t_I=\sup\set{t\in B^c\cap I}=b.
$$ 
As $a\in B^c$, in particular $a$ is a regular time and so times near $a$ are also in $B^c$. Thus, $B^c\cap I$ is non-empty so $t_I$ is well defined and $t_I>a$. Take a sequence of times $t_i\in B^c\cap I$ so that $t_i\to t_I$. Clearly, $t_I\in\mathrm{ST}(\mathcal{K})$. By Corollary \ref{IsolateSingTimeCor}, there is a $\delta>0$ so that $(t_I-\delta,t_I)\subseteq B^c \cap I$. If $t_I<b$, then $t_I$ is not the extinction time and, as $\Sigma$ is connected by hypothesis, all tangent flows at time $t_I$ have non-compact support. Hence, by our hypotheses, it follows from Proposition \ref{LocAlmostIsotopyProp} that there is a small $\Delta>0$ so that $\spt(\mu_{t_I+\Delta})$ is isotopic to $\spt(\mu_{t_I-\Delta})$ and hence to $\Sigma$. That is, $t_I+\Delta\in B^c\cap I$ but $t_I+\Delta>t_I$. This contradicts the definition of $t_I$ and thus $t_I=b$ proving the claim.

Fix any $\epsilon>0$. As $\mathrm{ST}(\mathcal{K})$ is compact and has Lebesgue measure zero, one finds a finite cover of $\mathrm{ST}(\mathcal{K})$, $I_j=(a_j,b_j)$ for $1\leq j\leq J$ with $a_1<a_2<\cdots<a_J$, that satisfies
\begin{itemize}
\item all $a_j,b_j$ are in $(-1,1)\setminus\mathrm{ST}(\mathcal{K})$;
\item $I_j\cap I_k\neq\emptyset$ only if $|j-k|\leq 1$;
\item $I_{j}$ is not a subset of $I_{k}$ when $|j-k|=1$.
\item $\sum_{j=1}^J |I_j|<\epsilon$.
\end{itemize}
We claim, it is possible to choose all $a_j\in B^c$. Indeed, as the flow is smooth on $[-1,a_1]$, one has $a_1\in B^c$. We next consider two situations. The first situation is that $I_1\cap I_2$ is empty. By the previous claim, $b_1\in B^c$ and, so either $a_2=b_1\in B^c$ or, if $b_1<a_2$ one observes the flow is smooth on $[b_1,a_2]$ and so also concludes $a_2\in B^c$. The second situation is that $I_1\cap I_2$ is non-empty. In this case the properties of the intervals ensure $a_1<a_2<b_1<b_2$. Replace $I_2$ by $I_2^\prime=(b_1,b_2)$ in the cover to obtain an new cover $\set{I'_j=(a'_j,b'_j)}_{1\leq j\leq J}$ that satisfies the same properties as the original cover but has $a^\prime_2=b_1$ is in $B^c$. Iterate this procedure on subsequent intervals to obtain a new cover $\set{I_j^{\prime\prime}=(a_j^{\prime\prime},b_j^{\prime\prime})}_{1\leq j \leq J}$ satisfying all properties of $I_j$ and, in addition, with all $a^{\prime\prime}_j\in B^c$. Appealing to the previous claim, one also has all $b^{\prime\prime}_j\in B^c$ and, hence, 
$$
(-1,1)\setminus\bigcup_{j=1}^J I^{\prime\prime}_j\subseteq B^c.
$$
Hence, 
$$
\mathcal{L}^1(B^c)\geq 2-\sum_{j=1}^J |I_j''| \geq 2-\sum_{j=1}^J |I_j|>2-\epsilon.
$$ 
Sending $\epsilon\to 0$, gives $\mathcal{L}^1(B^c)=2$. As $B$, $B^c$, and $\mathrm{ST}(\mathcal{K})$ are pairwise disjoint and their union is $(-1,1)$, 
$$
\mathcal{L}^1(B)=2-\mathcal{L}^1(B^c)-\mathcal{L}^1(\mathrm{ST}(\mathcal{K}))=0.
$$
This proves the claim.

We now consider the case that $\Sigma=\spt(\mu_{-1})$ is an asymptotically conical self-shrinker. Observe that, by the Frankel property of self-shrinkers, $\Sigma$ is connected. The arguments are essentially same as the previous case and we only mention necessary modifications. Namely, the sets $B$ and $B^c$ are replaced by, respectively,
$$
\hat{B}=\set{t\in (0,1)\setminus\mathrm{ST}(\mathcal{K})\colon \spt(\mu_t) \mbox{ is not $(R_0,C_0)$-regular a.c.-isotopic to } \Sigma}
$$
and
$$
\hat{B}^c=\set{t\in (0,1)\setminus\mathrm{ST}(\mathcal{K})\colon \spt(\mu_t) \mbox{ is $(R_0,C_0)$-regular a.c.-isotopic to } \Sigma}.
$$
The only difference is that $\spt(\mu_0)=\mathcal{C}(\Sigma)$ is singular. This can be addressed by using Proposition \ref{BasicRegProp}, that is, for all $0<t<\rho_+^2$ the $\spt(\mu_t)$ are $(R_0,C_0)$-regular a.c.-isotopic to $\Sigma$. Hence the result follows from the previous arguments with the above modifications.
\end{proof}

\section{Entropy quantization and a bubble tree-like structure} \label{BubbleTreeSec}
We need to improve the estimates on singularities given by Proposition \ref{BasicRegProp} in order to show that any non-compact tangent flow to a low entropy flow is actually an a.c.-almost isotopy. To do this requires an iterated blowup procedure that is reminiscent of the bubble-tree structure occurring in other areas of geometric analysis.  This ultimately shows that any tangent flow to a low entropy flow is an almost a.c.-isotopy by iterated blowups.  

We first establish a gap for the entropy of cones of asymptotically conical self-shrinkers.

\begin{lem} \label{GapLem}
Let $\Sigma$ be a non-flat asymptotically conical self-shrinker. One has
$$
\lambda[\Sigma]>\lambda[\cC(\Sigma)].
$$
\end{lem}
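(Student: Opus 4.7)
The plan is to reduce the claim to a strict comparison of Gaussian areas and then establish that comparison via Huisken's monotonicity formula applied to the self-similar shrinking flow generated by $\Sigma$, evaluated at an off-center base point. For the reductions: $\cC=\cC(\Sigma)$ is dilation-invariant, so $\lambda[\cC]=\sup_\yY F[\cC+\yY]$, and applying Huisken's monotonicity to the static flow $\mathcal{H}^n\lfloor\cC$ at base point $(\yY,t_0)$ and letting $t\to-\infty$ shows $F[\cC+\yY]\le F[\cC]$; hence $\lambda[\cC]=F[\cC]$. The analogous statement $\lambda[\Sigma]=F[\Sigma]$ for the self-shrinker $\Sigma$ is the standard Colding--Minicozzi fact. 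Thus it suffices to prove the strict inequality $F[\Sigma]>F[\cC]$.

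Consider the ancient flow $\mu_t=\mathcal{H}^n\lfloor\sqrt{-t}\,\Sigma$ for $t<0$, which extends weakly to $\mu_0=\mathcal{H}^n\lfloor\cC$ because $\Sigma$ is asymptotically conical. Fix any $t_0>0$ and define
\[
\Phi(t)=\int\frac{1}{(4\pi(t_0-t))^{n/2}}e^{-\frac{|\xX|^2}{4(t_0-t)}}\,d\mu_t(\xX).
\]
A direct change of variables $\xX=\sqrt{-t}\,\yY$ identifies $\Phi(t)=F[\alpha(t)\,\Sigma]$ with $\alpha(t)=\sqrt{-t/(t_0-t)}\in(0,1)$; hence $\Phi(t)\to F[\Sigma]$ as $t\to-\infty$ (when $\alpha\to1$), and $\Phi(t)\to F[\cC]$ as $t\to 0^-$ (when $\alpha\to 0$ and $\mu_t\to\mathcal{H}^n\lfloor\cC$).

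Huisken's monotonicity formula applied to the smooth flow $\{\mu_t\}_{t<0}$ yields
\[
\Phi'(t)=-\int\Bigl|\hH_{\Sigma_t}+\frac{\xX^\perp}{2(t_0-t)}\Bigr|^2\rho_{(\OO,t_0)}\,d\mu_t.
\]
Substituting the relation $\hH_{\Sigma_t}=-\xX^\perp/(2(-t))$ that the shrinking flow satisfies and simplifying produces
\[
\Phi'(t)=-\frac{t_0^2}{4t^2(t_0-t)^2}\int|\xX^\perp|^2\,\rho_{(\OO,t_0)}\,d\mu_t,
\]
which is strictly negative because $\Sigma$ non-flat forces $\xX^\perp\not\equiv\OO$ on $\Sigma_t$ (a smooth self-shrinker with $\xX^\perp\equiv\OO$ must be a hyperplane through the origin). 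Integrating from $-\infty$ to $0^-$ gives $F[\Sigma]-F[\cC]>0$, as desired. The one technical point is passing to the limit $\Phi(t)\to F[\cC]$ at $t\to 0^-$; assuming $\lambda[\Sigma]<\infty$ (otherwise the lemma is trivial), the measures $\mu_t$ have uniformly bounded area ratios and the Gaussian decay of the integrand allows one to justify this via truncation and dominated convergence.
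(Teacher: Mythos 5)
Your proof is correct, and it takes a genuinely different (and arguably cleaner) route from the paper's. The paper cites \cite{BWProper} for the existence of a maximizer $\xX_0$ with $F[\cC(\Sigma)+\xX_0]=\lambda[\cC(\Sigma)]$, then applies Huisken's monotonicity to the self-shrinking flow $\{\sqrt{-t}\,\Sigma\}$ with Gaussian weight centered at that off-origin point to conclude $\lambda[\Sigma]\geq F[\rho\Sigma+\yY]>F[\cC(\Sigma)+\xX_0]=\lambda[\cC(\Sigma)]$ for a suitable rescaling and translation. You instead first show directly that the entropy of any regular minimal cone is attained at the vertex, i.e.\ $\lambda[\cC]=F[\cC]$ (which the paper does not record), and combine with the Colding--Minicozzi fact $\lambda[\Sigma]=F[\Sigma]$ to reduce the lemma to the cleaner comparison $F[\Sigma]>F[\cC]$, which you then get by Huisken's monotonicity with Gaussian centered at the origin. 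The explicit computation of $\Phi'(t)$ in terms of $|\xX^\perp|^2$, the identification of $\Phi(t)=F[\alpha(t)\Sigma]$, and the endpoint limits $\Phi\to F[\Sigma]$ as $t\to-\infty$ and $\Phi\to F[\cC]$ as $t\to0^-$ are all correct. Your route avoids the appeal to \cite{BWProper} for the existence of a maximizer at the cost of one extra (but elementary) observation; the paper's route avoids having to locate the maximizer. The small technical point you flag -- passing to the limit $F[\alpha\Sigma]\to F[\cC]$ as $\alpha\to0^+$ -- is indeed handled by uniform area-ratio bounds together with exponential decay of the Gaussian weight, exactly as you say; for an asymptotically conical $\Sigma$ the convergence $\alpha\Sigma\to\cC$ is locally smooth away from the origin, which makes this routine.
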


\begin{proof}
As $\cC(\Sigma)$ is a smooth cone it follows that there is a point $\mathbf{x_0}\in\mathbb{R}^{n+1}$ so that
$$
F[\cC(\Sigma)+\xX_0]=\lambda[\cC(\Sigma)].
$$
See \cite{BWProper} for proof. It follows from Huisken's monotonicity formula \cite{Huisken} and the fact that $\Sigma$ is smooth and non-flat that
$$
\lambda[\Sigma]\geq F[\sqrt{2}\, \Sigma+\mathbf{x}_0]>F[\cC(\Sigma)+\xX_0]=\lambda[\cC(\Sigma)].
$$
This proves the claim.
\end{proof}

Using this result and a compactness result from previous work we have the following:

\begin{prop}\label{QuantaProp}
Fix $n\geq 3$, $\Lambda\in (\lambda_{n},\lambda_{n-1}]$ and $\epsilon_0\in (0,\Lambda)$. Assume that \eqref{Assump1} and \eqref{Assump2} hold. There is a $\delta_1=\delta_1(n,\Lambda,\epsilon_0)>0$ so that: if $\Sigma\in\mathcal{ACS}_n[\Lambda-\epsilon_0]$ is non-flat, then 
$$
\lambda[\Sigma]\geq \lambda[\cC(\Sigma)]+\delta_1.
$$	
\end{prop}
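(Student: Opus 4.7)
The plan is to argue by contradiction via the compactness results already established, combined with the strict gap from Lemma \ref{GapLem}.

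Suppose the conclusion fails. Then there exists a sequence of non-flat $\Sigma_i\in\mathcal{ACS}_n[\Lambda-\epsilon_0]$ with $\lambda[\Sigma_i]-\lambda[\mathcal{C}(\Sigma_i)]\to 0$. By Proposition \ref{CompactnessProp}(1), after passing to a subsequence, $\Sigma_i\to\Sigma_\infty$ in $C^\infty_{loc}(\mathbb{R}^{n+1})$ with $\Sigma_\infty\in \mathcal{ACS}_n[\Lambda-\epsilon_0]$. By Proposition \ref{CompactnessProp}(2), the links $\mathcal{L}(\Sigma_i)$ converge in $C^\infty(\mathbb{S}^n)$ to $\mathcal{L}(\Sigma_\infty)$, so the cones $\mathcal{C}(\Sigma_i)$ converge smoothly to $\mathcal{C}_\infty=\mathcal{C}(\Sigma_\infty)$ on compact subsets of $\mathbb{R}^{n+1}\setminus\set{\mathbf{0}}$.

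Next, I need continuity of entropy along these two convergences. For self-shrinkers one has $\lambda[\Sigma]=F[\Sigma]$, since the supremum in the definition of entropy is attained at $(\mathbf{0},1)$ under Huisken's monotonicity along the associated rescaled flow. For a regular cone $\mathcal{C}$, scale invariance gives $\lambda[\mathcal{C}]=\sup_{\mathbf{y}}F[\mathcal{C}+\mathbf{y}]$, and this supremum is attained at some bounded point $\mathbf{y}_0$ by \cite{BWProper}. Because $\lambda[\Sigma_i],\lambda[\mathcal{C}(\Sigma_i)]\leq \Lambda-\epsilon_0<2$, the area ratios are uniformly bounded, so standard cutoff arguments using the Gaussian weight $e^{-|\mathbf{x}|^2/4}$ together with the smooth local convergence yield
$$
\lambda[\Sigma_i]\to \lambda[\Sigma_\infty]\quad\text{and}\quad \lambda[\mathcal{C}(\Sigma_i)]\to \lambda[\mathcal{C}_\infty].
$$
Hence $\lambda[\Sigma_\infty]=\lambda[\mathcal{C}_\infty]$.

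It remains to show that $\Sigma_\infty$ is non-flat, after which Lemma \ref{GapLem} supplies the contradiction. If $\Sigma_\infty$ were a hyperplane, then Brakke's regularity theorem (together with the uniform entropy bound) forces the smooth graphical convergence of $\Sigma_i$ to $\Sigma_\infty$ on every compact set; unique continuation for the self-shrinker equation then implies $\Sigma_i=\Sigma_\infty$ for all sufficiently large $i$, contradicting non-flatness of $\Sigma_i$. Thus $\Sigma_\infty$ is a non-flat asymptotically conical self-shrinker and Lemma \ref{GapLem} gives $\lambda[\Sigma_\infty]>\lambda[\mathcal{C}_\infty]$, contradicting the equality above.

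The main technical obstacle is the passage of entropy to the limit: for the cones this uses the fact that the entropy-maximizing translate is bounded, while for the self-shrinkers it uses $\lambda=F$, and in both cases one needs the uniform area ratio control coming from the entropy bound $\Lambda-\epsilon_0$. Once these continuity statements are in place, the rest is a clean compactness-contradiction argument.
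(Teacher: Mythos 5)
Your overall architecture matches the paper exactly: argue by contradiction, use Proposition~\ref{CompactnessProp} to extract a smooth sub-sequential limit $\Sigma_\infty$ of shrinkers and of links, pass the entropy to the limit for both the shrinkers and their asymptotic cones, and then contradict Lemma~\ref{GapLem}. The entropy continuity for the shrinkers (via $\lambda=F$ and Gaussian-weighted cutoffs) and for the cones (via boundedness of the optimal translate) is fine and corresponds to what the paper does (the cone statement is \cite[Lemma 3.8]{BWProper}).

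However, your step ruling out flatness of $\Sigma_\infty$ is wrong as stated. You claim that if $\Sigma_\infty$ were a hyperplane then ``unique continuation for the self-shrinker equation'' would force $\Sigma_i=\Sigma_\infty$ for large $i$. Unique continuation is a rigidity statement for solutions that \emph{agree} (to infinite order at a point, or on an open set); it says nothing about solutions that are merely \emph{close}. Smooth local convergence $\Sigma_i\to\Sigma_\infty$ gives small graph functions $u_i\to 0$ over the hyperplane on compact sets, but that does not put you in a position to invoke unique continuation, and the conclusion ``$\Sigma_i=\Sigma_\infty$'' simply does not follow from that tool. The correct device here — and the one the paper uses — is the quantitative entropy gap for non-flat self-shrinkers coming from White's version of Brakke's $\epsilon$-regularity theorem \cite{WhiteReg}: there is $\epsilon(n)>0$ so that any non-flat self-shrinker satisfies $\lambda[\Sigma]=F[\Sigma]\geq 1+\epsilon(n)$. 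Applying this to each $\Sigma_i$ and using $F[\Sigma_i]\to F[\Sigma_\infty]$ gives $F[\Sigma_\infty]\geq 1+\epsilon(n)$, hence $\Sigma_\infty$ is non-flat, and Lemma~\ref{GapLem} finishes the proof. If you replace the unique-continuation appeal with this $\epsilon$-regularity gap, your proof becomes correct and coincides with the paper's.
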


\begin{proof}
We argue by contradiction.  Indeed, suppose there is a sequence $\Sigma_i\in\mathcal{ACS}_n[\Lambda-\epsilon_0]$ that are non-flat and so $\lambda[\Sigma_i]\leq \lambda[\cC(\Sigma_i)]+\frac{1}{i}$.
	
By White's version \cite{WhiteReg} of Brakke's regularity theorem, $\lambda[\Sigma_i]=F[\Sigma_i]\geq 1+\epsilon(n)$ where $\epsilon(n)>0$ is some fixed constant independent of the $\Sigma_i$. By Proposition \ref{CompactnessProp}, up to passing to a subsequence, one has $\Sigma_i\to \Sigma_\infty$ in $C^{\infty}_{loc}(\Real^{n+1})$ for $\Sigma_\infty\in\mathcal{ACS}_n[\Lambda-\epsilon_0]$ and $\mathcal{L}(\Sigma_i)\to \mathcal{L}(\Sigma_\infty)$ in $C^\infty(\mathbb{S}^n)$.  As $\lambda[\Sigma_\infty]=F[\Sigma_\infty]=\lim_{i\to \infty} F[\Sigma_i]=\lim_{i\to \infty}\lambda[\Sigma_i]$, one has $\lambda[\Sigma_\infty]\geq 1+\epsilon(n)$ and so $\Sigma_\infty$ is not flat.  Moreover, $\lambda[\Sigma_\infty]\leq \lim_{i\to \infty} \lambda[\cC(\Sigma_i)]$.  However, by \cite[Lemma 3.8]{BWProper} one has $\lambda[\cC(\Sigma)]=\lim_{i \to \infty} \lambda[\cC(\Sigma_i)]$ and so
$$
\lambda[\Sigma_\infty]\leq \lambda[\cC(\Sigma_\infty)].
$$
As $\Sigma_\infty$ is not flat, this contradicts Lemma \ref{GapLem} and proves the claim.
\end{proof}

We conclude

\begin{thm} \label{BubbleTreeThm}
Fix $n\geq 3$, $\Lambda\in (\lambda_n,\lambda_{n-1}]$ and $\epsilon_0\in (0,\Lambda)$. Assume that \eqref{Assump1} and \eqref{Assump2} hold. Let $\mathcal{T}=\set{\nu_t}_{t\in\mathbb{R}}$ be a matching motion in $\Real^{n+1}$ such that $\nu_{-1}=\mathcal{H}^n \lfloor \Sigma$ for $\Sigma\in\mathcal{ACS}_n[\Lambda-\epsilon_0]$. Then $\mathcal{T}$ is an $(R_0,C_0)$-regular almost a.c.-isotopy, where $R_0=R_0(n,\Lambda,\epsilon_0)$ and $C_0=C_0(n,\Lambda,\epsilon_0)$ are given by Proposition \ref{CondTangentFlowProp}.
\end{thm}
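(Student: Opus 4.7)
The plan is to deduce the theorem from Theorem \ref{AlmostACIsotopyThm} via an induction on entropy whose step size is the quantitative gap $\delta_1$ of Proposition \ref{QuantaProp}; this converts the informal ``bubble-tree'' picture into a finite procedure. Set $\delta_1=\delta_1(n,\Lambda,\epsilon_0)>0$ and, for integer $k\geq 0$, let $P(k)$ be the claim that every matching motion $\mathcal{K}=\set{\mu_t}_{t\in\mathbb{R}}$ with $\mu_{-1}=\mathcal{H}^n\lfloor\Sigma$ for some $\Sigma\in\mathcal{ACS}_n[\Lambda-\epsilon_0]$ satisfying $\lambda[\Sigma]\leq 1+k\delta_1$ is an $(R_0,C_0)$-regular almost a.c.-isotopy. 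The base case $P(0)$ is vacuous since Brakke--White $\epsilon$-regularity forces $\lambda[\Sigma]>1+\epsilon(n)$ for any non-flat self-shrinker, and elements of $\mathcal{ACS}_n$ are non-flat. As the entropies in $\mathcal{ACS}_n[\Lambda-\epsilon_0]$ lie in a bounded interval, showing $P(k)\Rightarrow P(k+1)$ closes the induction in at most $\lfloor(\Lambda-\epsilon_0-1)/\delta_1\rfloor+1$ steps.

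For the inductive step, fix $\Sigma$ with $\lambda[\Sigma]\leq 1+(k+1)\delta_1$ and let $\mathcal{T}=\set{\nu_t}$ denote the associated matching motion, restricted to $t\in[-1,1)$. I would apply Theorem \ref{AlmostACIsotopyThm} by verifying its tangent-flow hypothesis at every $(\mathbf{x}_0,t_0)\in\spt(\mathcal{T})$ with $t_0\in(0,1)$. By Proposition \ref{CondTangentFlowProp}, any $\mathcal{T}'=\set{\nu_t'}\in\mathrm{Tan}_{(\mathbf{x}_0,t_0)}\mathcal{T}$ has shrinker $\Sigma'=\spt(\nu_{-1}')$ that is either smoothly isotopic to $\mathbb{S}^n$---in which case alternative (1) of Theorem \ref{AlmostACIsotopyThm} holds---or asymptotically conical. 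In the latter case it remains to show $\mathcal{T}'$ is an $(R_0,C_0)$-regular almost a.c.-isotopy, which the inductive hypothesis $P(k)$ delivers provided we establish the entropy bound $\lambda[\Sigma']\leq 1+k\delta_1$.

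This entropy drop is the heart of the argument. Since $\Sigma$ is an asymptotically conical self-shrinker, $\mathcal{T}$ agrees with $\set{\sqrt{-t}\,\Sigma}$ on $t\in[-1,0)$ and thus has $\mu_0=\mathcal{H}^n\lfloor\mathcal{C}(\Sigma)$. Using that $\Sigma'$ is a multiplicity-one self-shrinker (appealing to the Chodosh--Schulze uniqueness noted in the paper's footnote), one has $\lambda[\Sigma']=F[\Sigma']=\Theta_{(\mathbf{x}_0,t_0)}[\mathcal{T}]$. Huisken's monotonicity formula applied at scale $r=\sqrt{t_0}$ then yields
\begin{equation*}
\Theta_{(\mathbf{x}_0,t_0)}[\mathcal{T}]\leq \int \Phi_{(\mathbf{x}_0,t_0)}(\mathbf{y},0)\, d\mathcal{H}^n_{\mathcal{C}(\Sigma)}(\mathbf{y})=F\!\left[\mathcal{C}(\Sigma)-\tfrac{\mathbf{x}_0}{\sqrt{t_0}}\right]\leq \lambda[\mathcal{C}(\Sigma)],
\end{equation*}
the middle equality exploiting that $\mathcal{C}(\Sigma)$ is dilation invariant so the $1/\sqrt{t_0}$ rescaling is absorbed. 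Combining with Proposition \ref{QuantaProp} gives $\lambda[\Sigma']\leq\lambda[\mathcal{C}(\Sigma)]\leq\lambda[\Sigma]-\delta_1\leq 1+k\delta_1$, so $P(k)$ applies to $\mathcal{T}'$ and the hypothesis of Theorem \ref{AlmostACIsotopyThm} is verified. Thus $\mathcal{T}$ is an $(R_0,C_0)$-regular almost a.c.-isotopy, establishing $P(k+1)$.

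The main obstacle is the density estimate $\lambda[\Sigma']\leq\lambda[\mathcal{C}(\Sigma)]$: one must carefully justify that Huisken monotonicity can be integrated up to the singular cone ``initial data'' at $t=0$, that the flow really converges to $\mathcal{H}^n\lfloor\mathcal{C}(\Sigma)$ as $t\to 0^+$, and that multiplicity-one holds so that $F[\Sigma']$ equals the Gaussian density. Given these standard but essential technicalities, Proposition \ref{QuantaProp} forces the iterated-blowup tree to terminate after finitely many levels, which is precisely the ``bubble-tree'' structure advertised in the section title.
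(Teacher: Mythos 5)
Your proof is correct and is essentially the paper's own argument repackaged: the paper runs the same entropy-quantization scheme as an infinite descent (iterated tangent flows at points with $t_0>0$, each dropping entropy by the $\delta_1$ of Proposition \ref{QuantaProp}, until White's regularity theorem forces smoothness), whereas you organize it as an induction on entropy levels; the ingredients---Theorem \ref{AlmostACIsotopyThm}, Proposition \ref{CondTangentFlowProp}, Proposition \ref{QuantaProp}, and the termination via $\epsilon$-regularity---are identical. Your explicit Huisken-monotonicity derivation of the density bound $\Theta_{(\mathbf{x}_0,t_0)}[\mathcal{T}]\le\lambda[\mathcal{C}(\Sigma)]$ for $t_0>0$ (together with multiplicity one, so $\lambda[\Sigma^\prime]=F[\Sigma^\prime]$ equals this density) correctly fills in the step the paper leaves implicit in the sentence ``As $t_0>0$, Proposition \ref{QuantaProp} implies that $\lambda[\mathcal{T}^\prime]\leq\lambda[\mathcal{T}]-\delta_1$.''
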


\begin{proof}
We argue by contradiction. Suppose $\mathcal{T}$ is not an $(R_0,C_0)$-regular almost a.c.-isotopy. By Proposition \ref{CondTangentFlowProp}, all the tangent flows of $\mathcal{T}$ are either isotopic to $\mathbb{S}^n$ or asymptotically conical. Appealing to Theorem \ref{AlmostACIsotopyThm} gives a point $(\mathbf{x}_0,t_0)\in \spt(\mathcal{T})$ with $t_0\in (0,1)$ and a tangent flow $\mathcal{T}^\prime=\set{\nu_t^\prime}_{t\in\mathbb{R}}\in \mathrm{Tan}_{(\mathbf{x}_0, t_0)}\mathcal{T}$ so that $\spt(\nu_{-1}^\prime)$ is non-compact, but $\mathcal{T}^\prime$ is not an $(R_0,C_0)$-regular almost a.c.-isotopy. As $t_0>0$, Proposition \ref{QuantaProp} implies that $\lambda[\mathcal{T}^\prime]\leq \lambda[\mathcal{T}]-\delta_1$ for some uniform $\delta_1>0$.  Repeating this argument, one constructs a sequence of matching motions $\mathcal{T}^{(l)}$ each of same form as $\mathcal{T}$ --i.e., non-compact and not an $(R_0,C_0)$-regular almost a.c.-isotopy -- but with $\lambda[\mathcal{T}^{(l)}]\leq \lambda[\mathcal{T}]-\delta_1 l$. Hence, for $l$ sufficiently large one can apply White version's \cite{WhiteReg} of Brakke regularity theorem and see that all $\mathcal{T}^{(l)}$ are smooth flows and, hence, are $(R_0,C_0)$-regular almost a.c.-isotopies. This is a contradiction and proves the claim.
\end{proof}

\section{Concluding the proof} \label{SingularitySec}
We now prove Theorem \ref{MainCondThm}. Theorem \ref{MainTopThm} is an immediate consequence of this.

\begin{proof}[Proof of Theorem \ref{MainCondThm}]
Without loss of generality, we may assume that $\Sigma$ is both non-fattening and satisfies $\lambda[\Sigma]<\Lambda$. Indeed, if $\Sigma$ is (after a translation and dilation) a self-shrinker, then, by \cite[Theorem 0.7]{CIMW}, $\Sigma$ is isotopic to $\mathbb{S}^n$ and so the theorem is immediate. Otherwise, flow $\Sigma$ for a small amount of time by the mean curvature flow (using short time existence of for smooth closed initial hypersurfaces) to obtain a hypersurface, $\Sigma^\prime$, isotopic to $\Sigma$ and, by Huisken's monotonicity formula \cite{Huisken}, with $\lambda[\Sigma^\prime]<\lambda[\Sigma]$. On the one hand, if the level set flow of $\Sigma^\prime$ is non-fattening, then set $\Sigma_0=\Sigma^\prime$. On the other hand, if the level set flow of $\Sigma^\prime$ is fattening, then we can take $\Sigma_0$ to be a small normal graph over $\Sigma^\prime$ so $\lambda[\Sigma_0]<\lambda[\Sigma]$, $\Sigma_0$ is isotopic to $\Sigma$ and, because the non-fattening condition is generic, the level set flow of $\Sigma_0$ is non-fattening. We now set $\Sigma=\Sigma_0$ and we have $\Sigma$ non-fattening and $\lambda[\Sigma]<\Lambda-\epsilon_0$ for some $\epsilon_0>0$. 
	
Now consider the matching motion $\mathcal{K}=\set{\mu_t}_{t\geq 0}$ associated to $\Sigma$ -- such a motion exists by \cite[Theorem 11.4]{IlmanenElliptic}. Let $T\in (0,\infty)$ be the extinction time of $\mathcal{K}$ so $\spt(\mu_t)=\emptyset$ for all $t>T$. By hypothesis, $\Sigma$ is connected and by Proposition \ref{CondTangentFlowProp} and Theorem \ref{BubbleTreeThm}, every tangent flow $\mathcal{T}=\set{\nu_t}_{t\in\mathbb{R}}$ satisfies either $\spt(\nu_{-1})$ is compact, or $\mathcal{T}$ is an $(R_0,C_0)$-regular a.c.-isotopy. Thus, it follows from Theorem \ref{AlmostACIsotopyThm} that $\mathcal{K}$ is an almost isotopy. As singularity models at the extinction time are isotopic to $\mathbb{S}^n$, $\Sigma$ is isotopic to $\mathbb{S}^n$. 
\end{proof}
\begin{rem}\label{MainThmPfRem}
	The proof of Theorem \ref{MainCondThm} establishes that any matching motion, $\mathcal{K}$, associated to a closed connected hypersurface of low entropy is isotopic to $\mathbb{S}^n$ at every non-empty regular time of the flow.  Hence, the only compact singularity of the flow occurs at the extinction time and the flow disappears at a single spatial point at this time. Non-fattening is used only to establish the existence of such a $\mathcal{K}$.
\end{rem}

\end{document}